\documentclass[UTF-8,reqno,12pt]{amsart}
\usepackage{enumerate}
\usepackage{mhequ}
\usepackage{amsmath}
\usepackage{siunitx}
\usepackage{enumitem} 
\usepackage{bbm}
\usepackage{geometry}
\usepackage[normalem]{ulem}
\usepackage{amssymb,url,color, booktabs,nccmath}
\usepackage{mathrsfs}
\usepackage{graphicx}
\usepackage{tikz}
\usetikzlibrary{arrows.meta, positioning, shapes.geometric, calc}
\usepackage{amsthm}
\usepackage{comment}
\usepackage{relsize}
\usepackage{algorithm}
\usepackage{algpseudocode}
\usepackage{caption}
\usepackage{placeins} 
\usepackage{float} 
\usepackage{enumerate}

\usepackage{dutchcal}
\usepackage{pifont}
\usepackage[colorlinks=true]{hyperref}
\hypersetup{
	linkcolor=blue,
	citecolor=red,
	filecolor=blue,
	urlcolor=cyan
}

\makeatletter
	\renewcommand{\subsection}{\@startsection
		{subsection}
		{2}
		{0mm}
		{0.5\baselineskip}
		{0.3\baselineskip}
		{\normalfont\normalsize\raggedright}}
\makeatother
\allowdisplaybreaks[4]
\usepackage{fancyhdr}
\numberwithin{equation}{section}

\def\dif{\mathrm{d}}
\def\E{\mathbb{E}}

\def\R{\mathbb{R}}

\def\d{\mathrm{d}}
\def\N{\mathbb{N}}

\def\1{\mathbbm{1}}
\def\e{\mathrm{e}}

\def\geq{\geqslant}
\def\leq{\leqslant}
\def\ge{\geqslant}
\def\le{\leqslant}

\newtheorem{theorem}{Theorem}[section]
\newtheorem{lemma}[theorem]{Lemma}
\newtheorem{remark}[theorem]{Remark}

\newtheorem*{theorem*}{Theorem}
\newtheorem*{remark*}{Remark}

\begin{document}
	
	\title{Numerical Approximation for Stochastic differential equations with State-Dependent  fast Switching}
	
\author{Xiaobin Sun\quad }
\curraddr[Sun, X.]{School of Mathematics and Statistics/RIMS, Jiangsu Normal University, Xuzhou, 221116, P.R. China}
\email{xbsun@jsnu.edu.cn}

\author{Mingkun Ye$^{*}$}
\curraddr[Ye, M.]{School of Mathematics and Statistics, Wuhan University, Wuhan, 430072, P.R. China}
\email{mingkunye@foxmail.com}
	
\author{Zuozheng Zhang}
\curraddr[Zhang, Z.]{School of Mathematics and Statistics, Wuhan University, Wuhan, 430072, P.R. China}
\email{zuozhengzhang@mail.bnu.edu.cn}
	
	\begin{abstract}
 	This paper aims to develop efficient numerical approximations for a class of stochastic differential equations with state-dependent fast switching processes. The direct Euler--Maruyama (EM) scheme fails when the scaling parameter is small. Based on the heterogeneous multiscale method of \cite{e2005analysis}, we propose three algorithms and prove their strong $L^p$-convergence with explicit rates for any $p\geq 2$. In the first algorithm, we combine the averaging principle with an EM scheme for the averaged equation, where the invariant measure of the Markov chain can be explicitly obtained by solving a linear system. However, computing this invariant measure incurs cubic cost as the number of switching states increases.

    To avoid solving large linear systems, we approximate the invariant measure instead. Thus the second and third algorithms both combine a macroscopic EM scheme for a modified averaged equation with micro-solvers that estimate the averaged drift. More precisely, in the second algorithm, a discrete-time Markov chain is simulated with a micro time step, and the averaged drift is obtained by averaging over finitely many micro transitions. However, both the first and second algorithms only work when the switching process has finite states. Therefore, we introduce a third algorithm that allows for switching processes with countably infinite states, in which exact continuous-time Markov chain is generated via the Gillespie algorithm, and the averaged drift is computed by exact time averaging over a specified interval. Numerical experiments verify the theoretical results and demonstrate the computational advantages of these three algorithms.
 	\\
	\\
		{\it Keywords}: Euler--Maruyama scheme; Numerical approximation; Stochastic differential equation; State-dependent fast switching;  Averaging principle; Poisson equation.
    \\
	\end{abstract}
	\thanks{$*$ Corresponding author}
	\maketitle

    \tableofcontents


\section{Introduction}
Stochastic differential equations (SDEs) with state-dependent switching provide a flexible framework for modeling abrupt structural shifts and random environmental regimes across finance, ecology, and control engineering. Specifically, this kind of model comprises two components \((X_t, \Lambda_t)\): \(X_t\) describes the spatial location of the system at time $t$, \(\Lambda_t\) indicates the system’s current regime. Its dynamics are given by the following SDEs:
\[
\begin{cases}
\dif X_t = b(X_t,\Lambda_t)\dif t + \sigma(X_t,\Lambda_t)\dif W_t,\\
\mathbb{P}\{\Lambda_{t+\Delta}=j|\Lambda_t=i,X_s,\Lambda_s,s\le t\}=
\begin{cases}
q_{ij}(X_t)\Delta+o(\Delta), & j\neq i\\
1+q_{ii}(X_t)\Delta+o(\Delta), & j=i,
\end{cases}\\
(X_0,\Lambda_0)=(x_0,i_0)\in\mathbb{R}^n\times\mathbb{S},
\end{cases}
\]
where $W_t$ is a standard $d$-dimensional Brownian motion on $(\Omega,\mathscr{F},\mathbb{P})$ with natural filtration $\{\mathscr{F}_t\}_{t\ge0}$,  $b\colon\mathbb{R}^n\times \mathbb{S}\to\mathbb{R}^n$, $\sigma\colon\mathbb{R}^n\times \mathbb{S}\to\mathbb{R}^n\otimes\mathbb{R}^d$, and $\mathbb{S}=\{1,2,\dots,N\}$ with $N\leq\infty$. When $q_{ij}(x)$ is independent of $x$ for all $i,j\in \mathbb{S}$, the above system is called SDEs with  state‑independent switching or SDEs with Markovian switching.

As explicit solutions are rarely available for such systems, numerical discretization serves as the primary tool for both theoretical analysis and practical computation. The systematic study of EM scheme for SDEs with Markovian switching was initiated by the seminal work of \cite{YUANandMAO2004MCS}, with several early follow-up works further developing the corresponding numerical theory under the global Lipschitz condition \cite{MaoYuanYin2005-JCAM,YuanMao2005-JDEA}.
Subsequent work has extended the convergence theory of the EM scheme to increasingly relaxed regularity conditions \cite{YuanMao2007-JCAM,NguyenNguyen2019-CoSA}. Further related results can be found in the monograph \cite{mao2006stochastic}. While most early studies focused on state-independent switching, the monograph \cite{YINandZHU2010BOOK} provides a comprehensive treatment of strong approximation, martingale-problem-based weak convergence, and numerical methods for the more technically challenging state-dependent case. \cite{JinShenSu2025-JTP} recently establishs strong \(L^1\) and \(L^2\) convergence of the EM scheme with explicit error bounds for the more technically demanding state-dependent case under non-Lipschitz coefficients. For superlinear switching SDEs where standard explicit EM fails due to moment explosion, \cite{LiMaYang2018-SINUM} employs the backward EM method to study numerical invariant measures. To circumvent the high computational cost of implicit schemes, \cite{NguyenNguyenYin2021-AMC} proposes a tamed EM scheme for superlinear hybrid SDEs with strong convergence proved under local Lipschitz and Khasminskii-type growth conditions. None of the aforementioned works consider fast switching scenarios. 

SDEs with fast switching can characterize a broad class of real-world phenomena, where structural and environmental transitions evolve on a much faster time scale than continuous diffusion dynamics, see neuronal model in \cite[Section 3]{PTW2012}.  Now let us consider a simple example on $\mathbb{R}\times \{1,2\}$:
\begin{equation}\label{Eq:fcz}
	\begin{cases}
		\d X_t = b(X_t, \Lambda_t) \d t + (1/5)\d W_t, \\
        (X_0, \Lambda_0) = (1, 1) \in \mathbb{R} \times \{1,2\},
	\end{cases}
\end{equation}
where the drift coefficient $b$ takes two distinct linear forms: $$b(x,1)=- x,\quad b(x,2)=-2x + 3 $$ and $\Lambda_t$ is a two states $\{1,2\}$ Markov chain with generator 
\[
Q^{\varepsilon} = \varepsilon^{-1}\begin{pmatrix}
	-{1}/{5} & {1}/{5} \\
	{2}/{5} & -{2}/{5}
\end{pmatrix}, \quad \varepsilon>0.
\]
 It follows from  \cite{mao2006stochastic} that the system \eqref{Eq:fcz} admits a strong solution. Then, consider a finite-state discrete-time Markov chain (DTMC) \((\alpha_{m})_{m\geq 0}\) with one-step transition probability matrix \(P=(p_{ij})_{N\times N}\). Following the inverse transform method (see, e.g., Mao et al. \cite[Section 3]{MaoYuanYin2005-JCAM}), the DTMC $(\alpha_{m})_{m\geq 0}$ can be simulated as follows:
\begin{algorithm}[htbp]
\caption*{\textbf{Inverse transform method}}
\label{alg:hmm:ITM}
\begin{algorithmic}[1]
\For{$m = 0,1,2,\dots$}
    \State $k \gets \alpha_m$ \Comment{current state}
    \State Extract row $\boldsymbol{p}_k = (p_{k1},p_{k2},\dots,p_{kN})$
    \State Compute $F_k(l) = \sum_{j=1}^{l}p_{kj}$ for $l=1,\dots,N$
    \State Generate $U_m \sim \text{Uniform}(0,1)$
    \State $l^* \gets \min\{l\in\{1,\dots,N\} \mid F_k(l)\ge U_m\}$
    \State $\alpha_{m+1} \gets l^*$
\EndFor
\end{algorithmic}
\end{algorithm}

Based on the above method, we employ the direct EM scheme to stochastic system \eqref{Eq:fcz}, see e.g., \cite{YUANandMAO2004MCS}. Specifically, initialize by setting \((Y_0, \widetilde{\Lambda}_0) = (1, 1)\):
\begin{algorithm}[htbp]
\caption*{\textbf{Direct EM scheme}}
\label{alg:hmm:DEM}
\begin{algorithmic}[1]
\Require  step size $\Delta_1$, terminal time $T$
\For{$n = 0,1,2,\dots,\lfloor T/\Delta_1\rfloor$}
\State Draw Brownian increment $\Delta W_n = W_{(n+1)\Delta_1}-W_{n\Delta_1}$
\State $Y_{(n+1)\Delta_1} \gets Y_{n\Delta_1} + b(Y_{n\Delta_1},\widetilde{\Lambda}_{n\Delta_1})\Delta_1 + \sigma\,\Delta W_n$
    \State $k \gets \widetilde{\Lambda}_{n\Delta_1}$ \Comment{current state}
    \State Compute $P=\e^{\Delta_1 Q^\varepsilon}$
    \State Extract the $k$-th row of $P$: $ \boldsymbol{p}_k = (p_{k1},p_{k2})$  
    \State Compute $F_k(l) = \sum_{j=1}^{l}p_{kj}$ for $l=1,2$
    \State Generate $U_n \sim \text{Uniform}(0,1)$
    \State $l^* \gets \min\{l\in\{1,2\} \mid F_k(l)\ge U_n\}$
    \State $\widetilde{\Lambda}_{(n+1)\Delta_1} \gets l^*$
\EndFor
\end{algorithmic}
\end{algorithm}

 By setting   $ \Delta_1 = 0.01$, $T = 1, $ we performed numerical validation using the EM scheme for this example. 
 As we can see from Figure \ref{fig:direct_euler01}, for relatively large \(\varepsilon\), the direct EM scheme closely matches the true solution. As \(\varepsilon\) decreases, the discrepancy between the exact solution and the  EM scheme increases gradually, and the two curves diverge noticeably. Consequently, the direct EM scheme fails for sufficiently small  \(\varepsilon \), and thus suitable numerical schemes are required to approximate the exact solution $X_t$. 
\begin{figure}[htbp]
	\centering
	\includegraphics[width=1\linewidth]{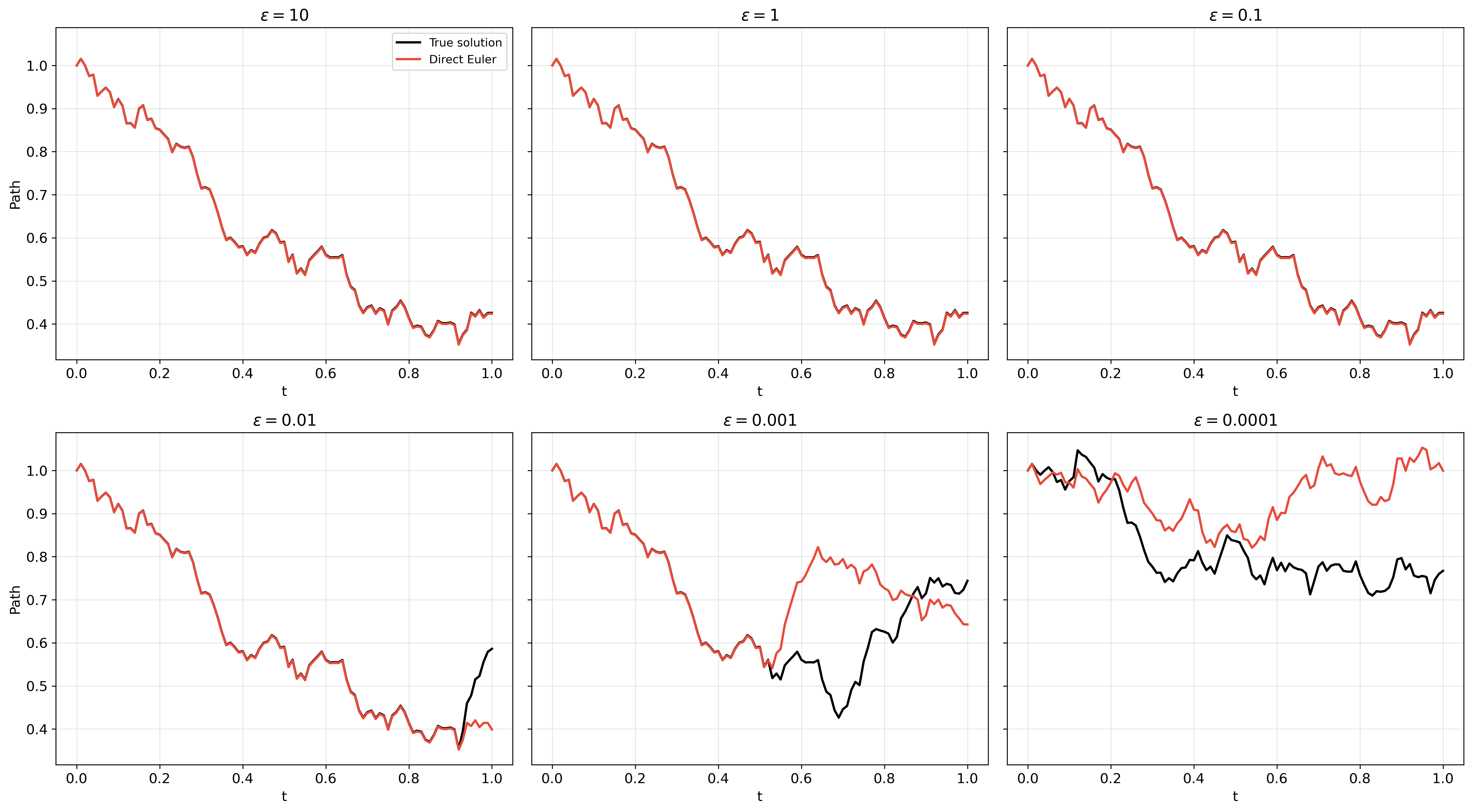}
	\caption{Comparison of sample paths of \(X_t\) and its EM scheme  as \(\varepsilon \downarrow 0\)}
	\label{fig:direct_euler01}
\end{figure}

 In what follows, we consider a more general class of SDEs with state-dependent  fast switching:
\begin{equation}\label{MULTI:EQ:0402:01}
	\left\{\begin{array}{l}
		\d X_{t}^{\varepsilon}=b\left(X_{t}^{\varepsilon}, \Lambda_{t}^{\varepsilon}\right) \d t+\sigma\left(X_{t}^{\varepsilon}\right) \d W_{t} \\
		\mathbb{P}\left(\Lambda_{t+\Delta}^{\varepsilon}=j \mid \Lambda_{t}^{\varepsilon}=i, X_{s}^{\varepsilon}, \Lambda_{s}^{\varepsilon}, s \leq t\right)=\left\{\begin{array}{l}
			\varepsilon^{-1} q_{i j}\left(X_{t}^{\varepsilon}\right) \Delta+o(\Delta), \quad i \neq j, \\
			1+\varepsilon^{-1} q_{i i}\left(X_{t}^{\varepsilon}\right) \Delta+o(\Delta), \quad i=j,
		\end{array}\right. \\
		\left(X_{0}^{\varepsilon}, \Lambda_{0}^{\varepsilon}\right)=(x_0,i_0) \in \mathbb{R}^{n} \times \mathbb{S},
	\end{array}\right.
\end{equation}
 where $\Lambda_t^\varepsilon$ exhibits rapid jumps on a time scale of \( O\left(\varepsilon^{-1}\right) \), with generator \( Q^{\varepsilon}(x)=\varepsilon^{-1} Q(x) \).
For the asymptotic analysis of stochastic system \eqref{MULTI:EQ:0402:01} has been well established, with primary attention devoted to averaging principles \cite{FGR2010,GT2012,MaoandShao2024averaging, PTW2012, SUNandXIE2025EJP,Y2001}, central limit theorem \cite{GT2014,PTW2012, SUNandXIE2025EJP}, diffusion approximations \cite{CAOandWu2026,sun2026diffusion}, and large deviation principles \cite{budhiraja2018large,hu2026large,li2025large,ma2022large}. However, the aim of this paper is the construction of refined numerical schemes for approximating exact solution in the strong sense, ensuring that the numerical solution stays consistent with the exact solution for sufficiently small \(\varepsilon \). It is important to emphasize that the diffusion coefficient takes the form $\sigma(x,y)= \sigma(x)$, which is essential for establishing strong convergence. Without this assumption, the strong averaging principle may fail to hold; see a counter-example in \cite[Remark 2.12]{SUNandXIE2025EJP}.

Note that the stochastic system \eqref{MULTI:EQ:0402:01} can be regard as a slow-fast stochastic system, where the switching term $\Lambda^{\varepsilon}_t$ is the fast component. 
Inspire from a notable work \cite{e2005analysis} for slow-fast coupled SDEs, a very powerful method, named heterogeneous multiscale method (HMM) is proposed to address this issue by coupling a macroscopic integrator with short-time microscopic simulations, yielding accurate approximations at a computational cost largely independent of the small scale parameter; see e.g. \cite{abdulle2012heterogeneous,Br2013,BR2022,cui2023strong,liu2010analysis} for more details.
For  system \eqref{MULTI:EQ:0402:01} under consideration, we now describe our HMM framework in two parts.

 \noindent(I) \textbf{Averaging principle}:
    Our numerical approach is based on the averaging principle for system \eqref{MULTI:EQ:0402:01}. As \(\varepsilon\to0\), the slow component \(X_t^\varepsilon\) converges strongly to the solution \(\bar{X}_t\) of the averaged equation:
    \begin{equation}\label{eq:averaged-equation}
    \dif \bar{X}_t = \bar{b}(\bar{X}_t) \dif t + \sigma(\bar{X}_t) \dif W_t, \quad
    \bar{X}_0 = x_0,
    \end{equation}
    where the averaged drift coefficient is defined by
    \begin{equation}\label{EQ:averaged:drift}
    	\bar{b}(x) := \sum_{i\in\mathbb{S}} b(x,i) \mu_i^{x}.
    \end{equation}
    Here, \(\mu^{x}=(\mu^{x}_i)_{i\in \mathbb{S}}\) denotes the invariant measure of the frozen CTMC \(\Lambda_{t}^{x}\) with generator \(Q(x)\).
    This principle guides our numerical strategy. 

\noindent(II) \textbf{EM scheme to the averaged equation:} Instead of discretizing the slow-fast system \eqref{MULTI:EQ:0402:01} directly, we apply the EM scheme to the averaged equation \eqref{eq:averaged-equation}. Thus the first job is to compute \(\bar{b}(x)\). To do this, we divide into two cases depending on whether the invariant measure \(\mu^x\) of the frozen fast process is explicitly solvable or can be approximated.  
    \begin{enumerate}
        \item [(1)]\textbf{Solvable $\mu^x$}: We can compute its invariant measure exactly by solving the linear system \(\mu^x Q(x)=0\) with the normalization condition \(\mu^x\mathbbm{1}=1\). Consequently, it gives the exact value of \(\bar{b}(x)\). Furthermore, the  standard EM scheme of the averaged equation \eqref{eq:averaged-equation}  is given by
	\begin{equation*}
		Z_{(n+1)\Delta_1}  =   Z_{n\Delta_1} +\bar{b}(Z_{n\Delta_1}) \Delta_1+ \sigma(Z_{n\Delta_1})\Delta W_n,
	\end{equation*}
	where $\Delta_1\in (0,1]$ is the time step size. 
        This algorithm has no sampling error and works very well for small state spaces. But its running time \(O(N^3)\) becomes too high when the number of states \(N\) of the switching process is large; see Section \ref{SEC:Direct-EM} for more details. 
    \item [(2)]\textbf{Approximable $\mu^x$:} In fact, the explicit form of $\mu^x$ is unnecessary, as we may introduce an estimator $\tilde{b}(x)$ to approximate it. Moreover, we require two solvers: a macro solver and a micro solver. The macro solver implements the EM scheme for the slow component, while the micro solver evaluates \(\tilde{b}(x)\) at each macro step and feeds the result back to the macro solver. The details are following:
    \begin{enumerate}
        \item [(2.1)]\textbf{Macro solver}:
 We use the EM scheme to evolve the modified averaged equation described by
	\begin{equation*}
		\hat{Z}_{(n+1)\Delta_1}  =   \hat{Z}_{n\Delta_1} +\tilde{b}(\hat{Z}_{n\Delta_1}) \Delta_1+ \sigma(\hat{Z}_{n\Delta_1})\Delta W_n,
	\end{equation*}
	where $\Delta_1$ is the macro time step size. 
    This macro step size avoids the numerical stiffness caused by the separation of time scales. 

    \item [(2.2)]\textbf{Micro solver}: In total, we give two complementary ways to implement the microscopic step:
    \begin{enumerate}
        \item[(2.2.1)] \textbf{Construction of DTMC}:  At each macro time step, with the slow variable fixed at \(x = \hat{Z}_{n\Delta_1}\), we construct a frozen DTMC \((\Lambda_{m\Delta_2}^{\hat{Z}_{n\Delta_1},i_0})_{m\geq 0}\) via the inverse transform method. The chain has transition probability matrix \(P(x) = \mathrm{e}^{\Delta_2 Q(x)}\) with micro step size \(\Delta_2\), and serves to approximate the fast process. The averaged drift is approximated by taking the arithmetic average over \(M\) consecutive micro steps:
        $$
        \tilde{b}(\hat{Z}_{n\Delta_1})=\tilde{b}_{\Delta_2,M}(\hat{Z}_{n\Delta_1}):= \frac{1}{M}\sum_{m=0}^{M-1} b\left(\hat{Z}_{n\Delta_1}, \Lambda_{m\Delta_2}^{\hat{Z}_{n\Delta_1},i_0}\right),
        $$
        where \(M\) is the total number of micro time steps. This version avoids solving large linear systems, but it adds discretization error from the micro step size \(\Delta_2\) and number $M$; see Section \ref{SEC:CLASSIC} for more details.

        \item [(2.2.2)]\textbf{Construction of CTMC}: As another sampling-based option, we simulate the exact continuous-time path of the frozen CTMC over the interval \([0,R]\) using the Gillespie algorithm. Owing to the piecewise-constant sample paths of the fast process, the averaged drift is estimated via exact time averaging:
        \begin{align*}
        \tilde{b}(\hat{Z}_{n\Delta_1})=\tilde{b}_{R}(\hat{Z}_{n\Delta_1}):=& \frac{1}{R}\int_0^R b\left(\hat{Z}_{n\Delta_1}, \Lambda_{s}^{\hat{Z}_{n\Delta_1},i_0}\right)\dif s,
    \end{align*}
    where  $R$ denotes  the length of the time window for continuous-time averaging.
        This method can handle the infinite-state case and remove all discretization error from the micro time step; see Section \ref{SEC:PROPOSED} for more details.
    \end{enumerate}
    \end{enumerate}
    \end{enumerate}
    
 As we can see from the  Figure \ref{fig:algo_structure}, all these three algorithms share the same basic workflow. We first average the original process \(X_t^{\varepsilon}\) to obtain the solution \(\bar{X}_t\) of the averaged equation, which does not depends on scale parameter $\varepsilon$, and then construct numerical approximations for \(\bar{X}_t\). Through solving the corresponding linear systems, we get the expression of $\mu^x$ in the first algorithm, thus the averaged drift  \(\bar{b}(x)\) is  explicit, however the  computational cost of this algorithm increasing when state number $N$ increasing. In comparison, both the second and third algorithms use a two-layer structure that includes macro solver and micro solver.  For these two schemes, the cost of microscopic simulations is determined by the mixing rate of the fast process and does not depend on the scale parameter \(\varepsilon\). As a result, the total computational cost of all three methods  remains bounded even when \(\varepsilon \to 0\). This feature gives the proposed HMM methods a clear computational advantage over direct simulation of the original slow-fast coupled system.

\begin{figure}[htbp]
\centering
\begin{tikzpicture}[
    block/.style={
        rectangle, draw, thick,
        text width=3.8cm, text centered,
        inner sep=0.5em, minimum height=1.1em,
        font=\small
    },
    smallblock/.style={
        rectangle, draw, thick,
    minimum width=2.6cm,  
    minimum height=2.8em,  
    text width=2.0cm,      
    align=center,          
    inner sep=0pt,         
    font=\small
    },
    oval/.style={
        ellipse, draw, thick,
        text width=2.2cm, text centered,
        inner sep=0.4em, font=\small
    },
    arrow/.style={-Stealth, thick},
    dashconn/.style={dashed, thick},
    node distance=0.7cm and 0.7cm
]

\node (orig) [block] {Original process \(X_t^\varepsilon\)};
\node (avg) [block, below=of orig] {Averaged equation $\bar{X}_t$};
\node (em) [block, below=of avg] {EM scheme to $\bar{X}_t$};

\node (solv) [block, below =0.7cm of em, xshift=-3.5cm] {Solvable \(\mu^x\)};
\node (approx) [block,below =0.7cm  of em, xshift=3.5cm] {Approximable $\mu^x$};

\node (solve_lin) [smallblock, below=of solv] {Solve linear systems};

\node (macro)[smallblock, below=0.7cm of approx, xshift=-2.8cm]  {Macro\\ solver};
\node (micro) [smallblock, below=0.7cm of approx, xshift=2.8cm]  {Micro\\ solver};

\node (dtmc) [smallblock, below=0.7cm of micro, xshift=-1.5cm] {Construction of DTMC};
\node (ctmc) [smallblock, below=0.7cm of micro, xshift=1.5cm] {Construction of CTMC};

\node (algo1) [oval, below=3.3cm of solve_lin] {Algorithm 1};
\node (algo2) [oval, below=3.3cm of {$(macro.south)!0.01!(dtmc.south)$}] {Algorithm 2};
\node (algo3) [oval, below=2.2cm of {$(macro.south)!0.6!(ctmc.south)$}] {Algorithm 3};

\draw[arrow] (orig) -- (avg);
\draw[arrow] (avg) -- (em);
\draw[arrow] (em) -- (solv);
\draw[arrow] (em) -- (approx);
\draw[arrow] (solv) -- (solve_lin);
\draw[arrow] (approx) -- (macro);
\draw[arrow] (approx) -- (micro);
\draw[arrow] (micro) -- (dtmc);
\draw[arrow] (micro) -- (ctmc);

\draw[dashconn] (solve_lin) -- (algo1);
\draw[dashconn] (macro.south) -- (algo2.north);
\draw[dashconn] (dtmc.south) -- (algo2.north);
\draw[dashconn] (macro.south) -- (algo3.north);
\draw[dashconn] (ctmc.south) -- (algo3.north);

\end{tikzpicture}
\caption{Flowchart of three algorithms}
\label{fig:algo_structure}
\end{figure}

The rest of the paper is organized as follows. Section \ref{SEC:PRE} reviews preliminaries including standing assumptions and the averaging principle. Section \ref{SEC:Direct-EM} proposes Algorithm \ref{alg:hmm-euler} by solving the  invariant measure, and analyzes its convergence and efficiency.
Section \ref{SEC:CLASSIC} introduces Algorithm \ref{alg:discrete}, presents its convergence analysis,  and analyzes its limitations. Section \ref{SEC:PROPOSED} presents Algorithm \ref{alg:continuous} and its convergence analysis. Section \ref{SEC:Conclusion} presents the conclusion, outlook of this paper, and  several directions for future research.

\section{Preliminaries}\label{SEC:PRE}
Let $|\cdot|$ and $\|\cdot\|$ be  the standard Euclidean vector norm and matrix norm, respectively. Specifically, for $x = (x_k)_{1\leq k\leq n} \in \mathbb{R}^n$ and $\sigma = (\sigma_{kl})_{1\leq k\leq n,1\leq l\leq d} \in \mathbb{R}^n \otimes \mathbb{R}^d$,
\[
|x| := \left(\sum_{k=1}^n |x_k|^2\right)^{1/2},\quad \|\sigma\| := \left(\sum_{k=1}^n\sum_{l=1}^d |\sigma_{kl}|^2\right)^{1/2}.
\]
For $l_1,l_2 \in \mathbb{N}_+$, let $\mathscr{B}_b(\mathbb{S},\mathbb{R}^{l_1}\otimes\mathbb{R}^{l_2})$ be the space of all map $f(i): \mathbb{S} \to \mathbb{R}^{l_1}\otimes\mathbb{R}^{l_2}$ satisfying $\|f\|_\infty := \sup_{i\in\mathbb{S}} \|f(i)\| < \infty$. The total variation distance between probability measures $\mu$ and $\nu$ on $\mathbb{S}$ is denoted by $\|\mu-\nu\|_{\rm{var}}$. For a matrix $M = (m_{ij})_{i,j\in\mathbb{S}}$, denote $\|M\|_\ell := \sup_{i\in\mathbb{S}}\sum_{j\in\mathbb{S}} |m_{ij}|$.

Throughout this paper, we assume $b, \sigma$, and $Q$ in system \eqref{MULTI:EQ:0402:01}  satisfy the following conditions:
\begin{enumerate}[label=(H\arabic*)]
    \item\label{H1} There exists $C>0$ such that for $x, y \in \mathbb{R}^{n}, i, j \in \mathbb{S}$,
\begin{align*}
    &|b(x, i)-b(y, j)|\leq C(|x-y|+\mathbbm{1}_{\{i \neq j\}}), \quad 
\|\sigma(x)-\sigma(y)\| \leq C|x-y|.
\end{align*}
\item  (i) Assume $Q(x)=\left(q_{i j}(x)\right)_{i, j \in \mathbb{S}}: \mathbb{R}^{n} \rightarrow \mathbb{R}^{N} \otimes \mathbb{R}^{N}$ is measurable and conservative, i.e., 
\[
q_{i j}(x) \geq 0\text{~~for~any~~} i \neq j \in \mathbb{S},x \in \mathbb{R}^{n},  \quad \sum_{j \in \mathbb{S}} q_{i j}(x)=0\text{~for~any~} i \in \mathbb{S},x \in \mathbb{R}^{n}.
\]
(ii) Assume $Q(x)$ is irreducible, that is, for any $x \in \mathbb{R}^{n}$, the equations
\[
\mu^{x} Q(x)={0},    ~with~   \sum_{i \in \mathbb{S}} \mu_{i}^{x}=1
\]
have a unique solution $\mu^{x}=\left(\mu_{1}^{x}, \mu_{2}^{x}, \ldots, \mu_{N}^{x}\right)$ with $\mu_{i}^{x}>0$ for all $i \in \mathbb{S}$.\\
(iii) Let $P_{t}^{x}=\mathrm{e}^{Q(x) t}:=\left(p_{i j}^{x}(t)\right)_{i, j \in \mathbb{S}}$ be the transition probability matrix associated with $Q(x)$. $P_{t}^{x}$ is exponentially ergodic uniformly in $x$, i.e., there exist $C>0, \lambda>0$ such that
\[
\sup _{i \in \mathbb{S}, x \in \mathbb{R}^{n}}\left\|p_{i\cdot}^{x}(t)-\mu^{x}\right\|_{\mathrm{var}} \leq C \mathrm{e}^{-\lambda t}, \quad  \forall t>0.
\]
\item \label{ASSUMPTION:MULTI:01}   Assume there exists $C>0$ such that
\begin{equation}\label{EQ:LIPSCHITZ:Q:01}
    \|Q(x)-Q(y)\|_{\ell} \leq C|x-y|,\quad   \forall x, y \in \mathbb{R}^{n}.
\end{equation}
\[
K(x):=\sum_{i\in \mathbb{S}} \sum_{j \in \mathbb{S} \backslash\{i\}} q_{i j}(x) \leq C\left(1+|x|\right), \quad  \forall x \in \mathbb{R}^{n}.
\]
\end{enumerate}

\begin{remark}
Since the Lipschitz continuity of $b$ with respect to $x$ is assumed in (H1), the standard EM scheme can be applied to the averaged equation. This condition can be extended to more general settings, such as local Lipschitz continuity \cite[(2.1)]{SUNandXIE2025EJP}, in which case the standard EM scheme is replaced by the truncated EM (see, e.g., \cite{Mao2016}).
\end{remark}

\begin{remark}\label{MULTI:RMK:0416:01}
Under (H1), the Lipschitz continuity of the averaged drift \(\bar{b}\) follows by a similar argument to that in \cite[Lemma 4.2]{SUNandXIE2025EJP}, i.e.,
\[
|\bar{b}\left(x_{1}\right)-\bar{b}\left(x_{2}\right)| \leq C\left|x_{1}-x_{2}\right|,
\]
which implies the linear growth condition of $\bar{b}$, that is,
\[
|\bar{b}(x)| \leq C\left(1+|x|\right).
\]
\end{remark}

To proceed, we first recall the existence and uniqueness of the solution to  SDE \eqref{eq:averaged-equation}.
\begin{lemma}{$($\cite[Lemma 4.2]{SUNandXIE2025EJP}$)$}
	Suppose that conditions (H1)-(H3) hold. Then the averaged equation \eqref{eq:averaged-equation} admits a unique solution $(\bar{X}_{t})_{t \geq 0}$. Moreover, for any $T>0$ and $p>0$, there exists $C_{p, T}>0$ such that
	\[
	\mathbb{E}\left(\sup _{0 \leq t \leq T}|\bar{X}_{t}|^{p}\right) \leq C_{p, T}\left(1+|x|^{p}\right).
	\]
\end{lemma}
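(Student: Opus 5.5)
Since Remark~\ref{MULTI:RMK:0416:01} already gives that $\bar b$ is globally Lipschitz with linear growth, and (H1) gives the same for $\sigma$, the averaged equation \eqref{eq:averaged-equation} is a standard SDE with globally Lipschitz coefficients. The plan therefore has two parts. First, existence and uniqueness come from the classical Picard iteration or Itô's theorem. Second, the moment bound follows from Burkholder--Davis--Gundy together with Gronwall. The only non-routine input is the Lipschitz property of $\bar b$, which I take from the remark. For completeness I would indicate why it holds: write
\[
\bar b(x_1)-\bar b(x_2)=\sum_i\big(b(x_1,i)-b(x_2,i)\big)\mu^{x_1}_i+\sum_i b(x_2,i)\big(\mu^{x_1}_i-\mu^{x_2}_i\big).
\]
The first sum is at most $C|x_1-x_2|$ by (H1). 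For the second, subtract $b(x_2,i_*)$ for a fixed state $i_*$, which is allowed since both measures have mass one. By (H1), $|b(x_2,i)-b(x_2,i_*)|\le C$, so the second sum is bounded by $C\|\mu^{x_1}-\mu^{x_2}\|_{\rm var}$.

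To control $\|\mu^{x_1}-\mu^{x_2}\|_{\rm var}$ I would use the perturbation identity
\[
P^{x_1}_t-P^{x_2}_t=\int_0^t P^{x_1}_s\big(Q(x_1)-Q(x_2)\big)P^{x_2}_{t-s}\,\d s.
\]
Since $Q(x_1)-Q(x_2)$ has zero row sums, its action on a bounded function $f$ only sees the centred part $P^{x_2}_{t-s}f-\mu^{x_2}(f)$. By the uniform ergodicity (H2)(iii), that centred part is bounded by $Ce^{-\lambda(t-s)}\|f\|_\infty$. Combining this with (H3) gives $\|p^{x_1}_{i\cdot}(t)-p^{x_2}_{i\cdot}(t)\|_{\rm var}\le C|x_1-x_2|$ uniformly in $t$. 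Letting $t\to\infty$ and using (H2)(iii) again yields $\|\mu^{x_1}-\mu^{x_2}\|_{\rm var}\le C|x_1-x_2|$. This step is the main obstacle, since it is the only place where the uniform mixing must be coupled with the Lipschitz bound on $Q$. It is exactly the content of the cited \cite[Lemma 4.2]{SUNandXIE2025EJP}, so I would quote it.

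For the moment estimate, take $p\ge2$ first; the case $p\in(0,2)$ then follows by Hölder's (Jensen's) inequality. Write
\[
\bar X_t=x+\int_0^t\bar b(\bar X_s)\,\d s+\int_0^t\sigma(\bar X_s)\,\d W_s .
\]
Apply $|a+b+c|^p\le 3^{p-1}(|a|^p+|b|^p+|c|^p)$, bound the drift term with Hölder in time, and bound the martingale term with Burkholder--Davis--Gundy. The linear growth of $\bar b$ and of $\sigma$ (the latter from its Lipschitz property) then gives
\[
\E\sup_{s\le t}|\bar X_s|^p\le C_{p,T}\Big(1+|x|^p+\int_0^t\E\sup_{r\le s}|\bar X_r|^p\,\d s\Big).
\]
Gronwall's inequality concludes, provided the left side is finite. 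To guarantee finiteness, I would first run the argument up to the stopping time $\tau_R=\inf\{t:|\bar X_t|\ge R\}$ and then let $R\to\infty$ by Fatou's lemma.
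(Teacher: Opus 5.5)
Your proposal is correct and follows the same route as the paper. The paper gives no separate proof: it defers to \cite[Lemma 4.2]{SUNandXIE2025EJP} and Remark~\ref{MULTI:RMK:0416:01}, that is, global Lipschitz continuity of $\bar b$ followed by classical well-posedness and the BDG--Gr\"onwall moment estimate. Your sketch of the Lipschitz step, via the perturbation identity and uniform ergodicity giving $\|\mu^{x_1}-\mu^{x_2}\|_{\rm var}\le C|x_1-x_2|$, is sound, and your stopping-time localization correctly justifies applying Gr\"onwall.
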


The averaging principle reduces the complexity of the slow-fast system, which becomes the baseline to develop the HMM. To facilitate this, we first recall the strong averaging principle with optimal convergence rate.
\begin{lemma}{$($\cite[Theorem 2.3]{SUNandXIE2025EJP}$)$}\label{lem:first1}
	Suppose that (H1)-(H3) hold. Then for $x_0 \in \mathbb{R}^{n}, i_0 \in \mathbb{S}, T>0$, and $p>0$, there exist constants $C_{p, T}>0, k_{p}>0$ such that
	\[
	\mathbb{E}\left(\sup_{t\in [0,T]}|X^{\varepsilon}_t-\bar{X}_t|^{p}\right) \leq C_{p, T}\left(1+|x|^{k_{p}}\right) \varepsilon^{p / 2},  \quad \forall \varepsilon \in(0,1],
	\]
	where $\bar{X}_{t}$ is the unique solution of the averaged equation \eqref{eq:averaged-equation}.
\end{lemma}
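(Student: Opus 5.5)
The statement is the strong averaging principle with rate $\varepsilon^{p/2}$, quoted from \cite[Theorem 2.3]{SUNandXIE2025EJP}. To prove it directly, I would use the Poisson-equation technique. The first step is to establish uniform moment bounds. Under (H1), $b$ and $\sigma$ have linear growth uniformly in $i$, because $|b(x,i)-b(0,i_0)|\le C(|x|+1)$. Itô's formula and BDG then give $\mathbb{E}\sup_{t\le T}|X^\varepsilon_t|^p\le C_{p,T}(1+|x_0|^p)$ uniformly in $\varepsilon\in(0,1]$, and the same bound for $\bar X$ by the preceding lemma.

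The second step is the Poisson equation. For fixed $x$, set $\Phi(x,i):=\int_0^\infty \big(P^x_t b(x,\cdot)(i)-\bar b(x)\big)\,\dif t$. Then $-Q(x)\Phi(x,\cdot)(i)=b(x,i)-\bar b(x)$. By (H2)(iii), and because $b(x,\cdot)-b(x,i_0)$ is bounded by $C$, we get $\sup_i|\Phi(x,i)|\le C$. Next I need regularity of $\Phi$ in $x$, namely Lipschitz, and preferably $C^2$ or at least Itô-applicable. I would differentiate the semigroup using the Duhamel formula
\[
P^x_t-P^y_t=\int_0^t P^x_{t-s}\big(Q(x)-Q(y)\big)P^y_s\,\dif s .
\]
Combined with (H3) and the exponential ergodicity, this gives $|\Phi(x,i)-\Phi(y,i)|\le C|x-y|$. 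If $C^2$ regularity is not available, I would mollify $b(\cdot,i)$ and $Q(\cdot)$ in $x$ at scale $\delta$ and optimize $\delta$ at the end.

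The third step is the main decomposition. Write
\[
X^\varepsilon_t-\bar X_t=\int_0^t\big(\bar b(X^\varepsilon_s)-\bar b(\bar X_s)\big)\dif s+\int_0^t\big(\sigma(X^\varepsilon_s)-\sigma(\bar X_s)\big)\dif W_s+\int_0^t\big(b(X^\varepsilon_s,\Lambda^\varepsilon_s)-\bar b(X^\varepsilon_s)\big)\dif s .
\]
The first two terms are handled by Remark \ref{MULTI:RMK:0416:01}, BDG and Gronwall. For the last term I apply Itô's formula to $\Phi(X^\varepsilon_t,\Lambda^\varepsilon_t)$, whose generator includes $\varepsilon^{-1}Q(x)$. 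This gives
\[
\int_0^t\big(b-\bar b\big)(X^\varepsilon_s,\Lambda^\varepsilon_s)\,\dif s=\varepsilon\Big[\Phi(x_0,i_0)-\Phi(X^\varepsilon_t,\Lambda^\varepsilon_t)+\int_0^t\mathcal{L}_x\Phi\,\dif s+\int_0^t\nabla_x\Phi\,\sigma\,\dif W_s\Big]+\varepsilon M_t .
\]
Here $M_t$ is the martingale driven by the jumps of $\Lambda^\varepsilon$. The boundary terms and the $\mathcal{L}_x\Phi$ and $\dif W$ terms are $O(\varepsilon)$ in $L^p$. The jump martingale has quadratic variation of order $\varepsilon^{-1}\int_0^t K(X^\varepsilon_s)\,\|\Phi\|_\infty^2\,\dif s$. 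By BDG (Kunita's inequality for $p>2$, with the higher-order term being $O(\varepsilon^{-1})$ in total mass times $\varepsilon^p$, which is harmless), this gives $\mathbb{E}\sup_t|\varepsilon M_t|^p\le C\varepsilon^{p/2}(1+|x_0|^{k_p})$. This term is the source of the rate $\varepsilon^{1/2}$, and the polynomial factor comes from $K(x)\le C(1+|x|)$ together with the moment bounds. Gronwall then closes the estimate.

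I expect the main obstacle to be the regularity of $\Phi$ in $x$ needed for Itô's formula. Only Lipschitz continuity of $Q$ and $b$ is assumed, and the state space may be infinite. The mollification route with an optimal $\delta$ is what I would try first. If it loses the rate, I would instead use a time-discretization argument: freeze $x$ on blocks of length $\sqrt\varepsilon$ or $\varepsilon^{\gamma}$ and apply the Poisson equation only in $i$ on each block, which requires no $x$-derivatives. The Lipschitz bound on $\Phi$ then controls the freezing error.
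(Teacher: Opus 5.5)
The paper gives no proof of this lemma. It quotes it from \cite[Theorem 2.3]{SUNandXIE2025EJP}, and the only ingredient of that argument visible here is the Poisson equation of Theorem~\ref{thm:poisson1}. Your outline follows the same Poisson-equation strategy and is correct as far as it goes. That covers the uniform-in-$\varepsilon$ moment bounds and the bound $\sup_{x,i}|\Phi(x,i)|\le C$. This bound is sharper than the $C(1+|x|)$ of \eqref{MULTI:POISSON:EQ:042i}, because (H1) gives $b(x,\cdot)$ oscillation at most $C$. It also covers the Lipschitz bound via Duhamel: use that $Q(x)-Q(y)$ annihilates constants, so it only acts on the centered function $P^y_s b(y,\cdot)-\mu^y(b(y,\cdot))$, which decays like $e^{-\lambda s}$. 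Finally, you correctly identify $\varepsilon M_t$, whose bracket is $O(\varepsilon)$, as the source of $\varepsilon^{1/2}$. For $0<p<2$, reduce to $p=2$ by Jensen.

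The step you leave open closes at the optimal rate by either of your remedies, but only with the right parameters.

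\textbf{Mollification route.} Take $\delta=\varepsilon^{1/2}$, with $\bar b_\delta$ the average of $b_\delta$ against the invariant measure of $Q_\delta(x)$. Replacing $b,\bar b,Q$ by $b_\delta,\bar b_\delta,Q_\delta$ costs $O(\delta)$. This includes the term $\int_0^t (Q-Q_\delta)(X^\varepsilon_s)\Phi_\delta(X^\varepsilon_s,\cdot)(\Lambda^\varepsilon_s)\,\dif s$, which appears because $\Lambda^\varepsilon$ is driven by $Q$, not $Q_\delta$. The Itô correction satisfies $\varepsilon\int_0^t\mathcal L_x\Phi_\delta\,\dif s=O(\varepsilon\delta^{-1})$. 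You still have to verify two things:
\begin{enumerate}
\item $Q_\delta$ stays uniformly exponentially ergodic with $\|\mu^{\delta,x}-\mu^x\|_{\rm var}\le C\delta$. A perturbation argument from $\|Q_\delta(x)-Q(x)\|_\ell\le C\delta$ gives this for small $\delta$.
\item $\|\nabla_x^2\Phi_\delta\|_\infty\le C\delta^{-1}$, obtained by differentiating the Duhamel formula twice.
\end{enumerate}

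\textbf{Block route.} This route is more economical because it needs no $x$-derivatives of $\Phi$. However, the block length must be $h\asymp\varepsilon$, not $\sqrt\varepsilon$. Freeze $x=X^\varepsilon_{kh}$ on $[kh,(k+1)h)$ and apply Itô's formula only in $i$; the chain is still driven by $\varepsilon^{-1}Q(X^\varepsilon_s)$. Then:
\begin{enumerate}
\item The freezing errors in $b$, $\bar b$ and $Q$ are $O(h^{1/2})$, by (H1), Remark~\ref{MULTI:RMK:0416:01}, (H3) and $\|\Phi\|_\infty\le C$.
\item The jump martingale is still a single martingale on $[0,T]$ with predictable integrand bounded by $2\|\Phi\|_\infty$, hence $O(\varepsilon^{1/2})$.
\item The block boundary terms telescope, leaving $\varepsilon\sum_k|\Phi(X^\varepsilon_{(k+1)h},\Lambda^\varepsilon_{(k+1)h})-\Phi(X^\varepsilon_{kh},\Lambda^\varepsilon_{(k+1)h})|=O(\varepsilon h^{-1/2})$.
\end{enumerate}
The Lipschitz bound on $\Phi$ is needed for this telescoping, not for the freezing error. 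Without it the boundary terms are $O(\varepsilon/h)$ and the best balance is $\varepsilon^{1/3}$. The total $O(h^{1/2}+\varepsilon h^{-1/2}+\varepsilon^{1/2})$ is optimized at $h=\varepsilon$. The choice $h=\sqrt\varepsilon$ you mention would yield only $\varepsilon^{1/4}$ from this argument.
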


\section{Algorithm 1}\label{SEC:Direct-EM}
In this section, we present the first numerical scheme. Rather than directly applying the EM scheme to the slow process \(X_t^\varepsilon\), we only need to derive the EM scheme to the solution \(\bar{X}_t\) of the corresponding averaged equation, whose averaged coefficient $\bar{b}$ has a explicit expression through solving a linear system, that is, we construct an EM scheme \(Z_t\) for \(\bar{X}_t\), then together with Lemma \ref{lem:first1}, finally establish the error bound between \(X_t^\varepsilon\) and  \(Z_t\). Note that the number of states in the switching process is assumed to be finite throughout this section, thereby ensuring the solvability of the linear system associated with the invariant measure. Let $Z_0=x_0$. Algorithm \ref{alg:hmm-euler} is stated as follows: 
\begin{algorithm}[htbp]
\caption{}
\label{alg:hmm-euler}
\begin{algorithmic}[1]
\Require initial value $x_0$, step size $\Delta_1$, terminal time $T$
\State $Z_0 \gets x_0$
\For{$n = 0,1,2,\dots,\lfloor T/\Delta_1\rfloor-1$}
    \State $x \gets Z_{n\Delta_1}$
    \State Solve the linear system $\mu^x Q(x)=0$ subject to $\mu^x\mathbbm{1}=1$
    \State Compute the averaged drift $\bar{b}(x)=\sum_{i=1}^{N}b(x,i)\mu_i^x$
    \State Draw Brownian increment $\Delta W_n = W_{(n+1)\Delta_1}-W_{n\Delta_1}$
    \State $Z_{(n+1)\Delta_1} \gets Z_{n\Delta_1}+\bar{b}(Z_{n\Delta_1})\Delta_1+\sigma(Z_{n\Delta_1})\Delta W_n$
\EndFor
\State \Return $\{{Z}_{n\Delta_1}\},~0\leq n\leq \lfloor T/\Delta_1\rfloor$
\end{algorithmic}
\end{algorithm}

    For a given step size $\Delta_1 \in (0,1)$, define $t(\Delta_1):=\lfloor t/\Delta_1\rfloor\Delta_1$. We recall the  standard EM scheme of the averaged equation \eqref{eq:averaged-equation}:
	\begin{equation*}
		Z_{(n+1)\Delta_1}  =   Z_{n\Delta_1} +\bar{b}(Z_{n\Delta_1}) \Delta_1+ \sigma(Z_{n\Delta_1})\Delta W_n.
	\end{equation*}
    With a slight abuse of notation, the continuous-time interpolated version is denoted by \(Z_t\):
\begin{equation*}
	\dif Z_t = \bar{b}(Z_{t(\Delta_1)})\dif t+ \sigma (Z_{t(\Delta_1)})\dif W_t.
\end{equation*}

\subsection{Strong convergence of Algorithm 1}
Now, we first establish the   strong convergence between  $\bar{X}_t$ and $Z_t$, which is the standard result (see, e.g., \cite[Theorem 2.7.3]{MAOXUERONG2008BOOK}). For the convenience of the reader, we provide a brief proof. 
\begin{lemma}\label{lem:second2}
	Suppose that (H1)-(H3) hold. Then, for any $T>0$, $x_0\in\mathbb{R}^d$, and $p\geq 2$, there exists a constant $C_{x_0,T,p}$ such that for any $\Delta_1\in (0,1],$
	\begin{equation*}
		\mathbb{E}\left(\sup _{0 \leq t \leq T}\left|\bar{X}_{t}-Z_{t}\right|^{p}\right) \leq C_{x_0,T,p} \Delta_1^{p/2}.
	\end{equation*}
\end{lemma}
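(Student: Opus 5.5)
The plan is to run the classical strong-convergence argument for the EM scheme, using only that $\bar b$ and $\sigma$ are globally Lipschitz with linear growth. For $\sigma$ this is (H1); for $\bar b$ it is Remark \ref{MULTI:RMK:0416:01}.

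\textbf{Step 1: uniform moment bound for the scheme.} I will first show
\[
\mathbb{E}\sup_{0\le t\le T}|Z_t|^p\le C_{p,T}(1+|x_0|^p).
\]
Write $Z_t=x_0+\int_0^t\bar b(Z_{s(\Delta_1)})\,\dif s+\int_0^t\sigma(Z_{s(\Delta_1)})\,\dif W_s$. The drift integral is handled by H\"older's inequality and the stochastic integral by the Burkholder--Davis--Gundy inequality. Linear growth then gives
\[
\mathbb{E}\sup_{s\le t}|Z_s|^p\le C\big(1+|x_0|^p\big)+C\int_0^t \mathbb{E}\sup_{r\le s}|Z_r|^p\,\dif s,
\]
and Gronwall's lemma closes the bound. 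Finiteness of the left side is not an issue, because the scheme is built by finitely many steps, each with Gaussian increments. This step also holds with $Z$ replaced by $\bar X$, which is the earlier moment lemma for the averaged equation.

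\textbf{Step 2: one-step increment estimate.} Next I will show
\[
\sup_{0\le t\le T}\mathbb{E}|Z_t-Z_{t(\Delta_1)}|^p\le C\Delta_1^{p/2}.
\]
On $[t(\Delta_1),t]$ we have $Z_t-Z_{t(\Delta_1)}=\bar b(Z_{t(\Delta_1)})(t-t(\Delta_1))+\sigma(Z_{t(\Delta_1)})(W_t-W_{t(\Delta_1)})$. Linear growth together with Step 1 bounds the first term by $C\Delta_1^p$. For the second term, $Z_{t(\Delta_1)}$ is independent of the Brownian increment, so its $p$-th moment is at most $C\Delta_1^{p/2}$.

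\textbf{Step 3: error equation and Gronwall.} Let $e_t=\bar X_t-Z_t$. Then
\[
e_t=\int_0^t\big(\bar b(\bar X_s)-\bar b(Z_{s(\Delta_1)})\big)\dif s+\int_0^t\big(\sigma(\bar X_s)-\sigma(Z_{s(\Delta_1)})\big)\dif W_s .
\]
Apply H\"older's inequality, BDG, and the Lipschitz bounds, splitting $|\bar X_s-Z_{s(\Delta_1)}|\le|e_s|+|Z_s-Z_{s(\Delta_1)}|$. This yields
\[
\mathbb{E}\sup_{s\le t}|e_s|^p\le C_{T,p}\int_0^t\mathbb{E}\sup_{r\le s}|e_r|^p\,\dif s+C_{T,p}\int_0^T\mathbb{E}|Z_s-Z_{s(\Delta_1)}|^p\,\dif s .
\]
By Step 2 the last term is $O(\Delta_1^{p/2})$, and Gronwall's lemma gives the claim with a constant depending on $x_0,T,p$.

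I do not expect a genuine obstacle; everything is routine once Lipschitz continuity of $\bar b$ is available. The most delicate point is Step 2. The rate $\Delta_1^{p/2}$ comes entirely from the local Brownian increment, so the estimate must be applied inside a time integral rather than under a supremum, where a logarithmic loss would appear. Step 3 is arranged exactly this way: the increment term enters only through $\int_0^T\mathbb{E}|Z_s-Z_{s(\Delta_1)}|^p\,\dif s$, never as $\mathbb{E}\sup_s$.
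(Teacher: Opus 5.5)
Your proposal is correct and follows essentially the same route as the paper: BDG and H\"older with the Lipschitz continuity of $\bar b$ and $\sigma$, splitting $\bar X_s - Z_{s(\Delta_1)}$ through $Z_s$, bounding $\mathbb{E}|Z_s-Z_{s(\Delta_1)}|^p\le C\Delta_1^{p/2}$ inside the time integral, and closing with Gr\"onwall. The only difference is that you prove the moment and one-step increment estimates yourself (Steps 1--2), whereas the paper cites them from Mao's monograph.
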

\begin{proof}
    By definition of $\bar{X}_{t}$ and $Z_t$, one has
	\begin{align*}
		\bar{X}_{t}-Z_t = \int_{0}^{t}\left(\bar{b}(\bar{X}_{s})-\bar{b}(Z_{s(\Delta_1)})\right)  \dif s+\int_{0}^{t}\left(\sigma(\bar{X}_{s})-\sigma(Z_{s(\Delta_1)})\right)  \dif W_s.
	\end{align*}
	It follows from the Burkholder-Davis-Gundy inequality, H\"older's inequality, and the $C_r$-inequality that for any $t\leq T$,
	\begin{align*}
		\mathbb{E}\left(\sup_{0\leq s\leq t}|\bar{X}_s-{Z}_s|^p\right)
		&\leq C_{p,T}\int_{0}^{t}\E|\bar{b}(\bar{X}_s)-\bar{b}(Z_{s(\Delta_1)})|^p\dif s\\
		&\quad + C_p\E\left(\int_{0}^{t}\|\sigma(\bar{X}_s)-\sigma(Z_{s(\Delta_1)})\|^2\dif s\right)^{p/2}\\
		&\leq C_{p,T}\int_{0}^{t}\E|\bar{X}_s-Z_{s}|^p\dif s+C_{p,T}\int_{0}^{t}\E|Z_s-Z_{s(\Delta_1)}|^p\dif s,
	\end{align*}
	where we used the Lipschitz continuity of $\bar{b}$ and $\sigma$ in the last step. By the standard moment estimate for the EM scheme (see, e.g., \cite[Lemma 2.6.2]{MAOXUERONG2008BOOK}), one has $\E|Z_s-Z_{s(\Delta_1)}|^p\leq C_{p,T}\Delta_1^{p/2}$. Consequently,
	\begin{align*}
        \mathbb{E}\left(\sup_{0\leq s\leq t}|\bar{X}_s-{Z}_s|^p\right)
        &\leq C_{x_0,T,p}\Delta_1^{p/2}+C_{p,T}\int_{0}^{t}\E\left(\sup_{0\leq u\leq s}|\bar{X}_u-Z_{u}|^p\right)\dif s.
	\end{align*}
    The Gr\"onwall inequality implies the desired result.
\end{proof}

Combining Lemmas \ref{lem:first1} and \ref{lem:second2}, we directly arrive at the following result.
\begin{theorem}\label{Thm:qi3}
    Suppose that (H1)-(H3) hold. Then, for any $T>0$, $x_0\in\mathbb{R}^d$, $i_0\in\mathbb{S}$, and $p\geq 2$, there exists a constant $C_{x_0,T,p}$ such that for any $\varepsilon\in(0,1]$ and $\Delta_1\in (0,1]$,
	\begin{align*}
		\mathbb{E}\left(\sup_{0\leq t\leq T}|X^{\varepsilon}_t-Z_t|^p\right) \leq C_{x_0,T,p} \left(\varepsilon^{p/2}+\Delta_1^{p/2}\right).
	\end{align*}
\end{theorem}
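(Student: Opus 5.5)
The plan is a triangle-inequality argument that passes through the averaged solution $\bar{X}_t$. Write $X^\varepsilon_t - Z_t = (X^\varepsilon_t-\bar{X}_t) + (\bar{X}_t - Z_t)$ pathwise. Taking the supremum over $t\in[0,T]$ and then the $C_r$-inequality $|a+b|^p\le 2^{p-1}(|a|^p+|b|^p)$, valid since $p\ge 2\geq 1$, gives
\begin{align*}
\mathbb{E}\left(\sup_{0\le t\le T}|X^\varepsilon_t-Z_t|^p\right)\le 2^{p-1}\,\mathbb{E}\left(\sup_{0\le t\le T}|X^\varepsilon_t-\bar{X}_t|^p\right)+2^{p-1}\,\mathbb{E}\left(\sup_{0\le t\le T}|\bar{X}_t-Z_t|^p\right).
\end{align*}

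For the first term I will invoke Lemma \ref{lem:first1} with initial data $(x_0,i_0)$. It gives the bound $C_{p,T}(1+|x_0|^{k_p})\varepsilon^{p/2}$, uniformly for $\varepsilon\in(0,1]$. For the second term I will invoke Lemma \ref{lem:second2}, which gives $C_{x_0,T,p}\Delta_1^{p/2}$ for $\Delta_1\in(0,1]$. Both processes are driven by the same Brownian motion $W$, and $\bar{X}$ does not depend on $\varepsilon$. The two estimates therefore concern the same coupled objects, and the two error sources never interact.

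Finally, I absorb $2^{p-1}C_{p,T}(1+|x_0|^{k_p})$ and $2^{p-1}C_{x_0,T,p}$ into a single constant, taking their maximum. This constant depends on $x_0,T,p$ and not on $\varepsilon$ or $\Delta_1$. It is uniform in $i_0$ because $\mathbb{S}$ is finite in this section, or simply because $i_0$ is fixed. This yields the claimed bound $C_{x_0,T,p}(\varepsilon^{p/2}+\Delta_1^{p/2})$.

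There is no real obstacle here. The only point to check is that both lemmas are stated on the same probability space with the same $W$ and the same initial point $x_0$, so that they can be combined pathwise. This is exactly how $Z$ is defined in Algorithm \ref{alg:hmm-euler}.
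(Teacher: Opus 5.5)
Your proposal is correct and is exactly the paper's argument. The paper obtains the theorem in one line by combining the averaging principle (Lemma \ref{lem:first1}) with the EM error bound for the averaged equation (Lemma \ref{lem:second2}) through the $C_r$-inequality, as you do.
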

\begin{remark}
    Since the invariant measure \(\mu^x\) of the frozen fast  process is exactly computable, the corresponding averaged drift \(\bar{b}(x)\) is directly available in numerical simulations without further approximation. Therefore, we may directly employ \(Z_t\) as an estimator of \(\bar{X}_t\), which further yields strong convergence between \(X_t^\varepsilon\) and \(Z_t\).
\end{remark}

In what follows, we present two examples to illustrate the proposed method.

\subsection{Numerical experiments of Algorithm 1}
\noindent\textbf{Example (a)} We implement the above approach to carry out numerical simulations for the example  presented in Introduction (see \eqref{Eq:fcz}). In this example, the unique invariant probability measure admits a simple closed form $\mu=(\mu_1,\mu_2)=(2/3,1/3)$. Moreover,  the corresponding averaged equation is
\[
\d \bar{X}_t = \left(-\frac{4}{3} \bar{X}_t + 1\right) \d t + \frac{1}{5} \d W_t, \quad \bar{X}_0 = 1.
\]
This system corresponds to a standard Ornstein–Uhlenbeck process admitting an explicit solution:
\[
\bar{X}_t = \mathrm{e}^{-{4}t/{3} } + \frac{3}{4}\left(1 - \mathrm{e}^{-{4}t/{3}}\right) + \frac{1}{5} \int_0^t  \mathrm{e}^{-{4}(t-s)/3} \d W_s.
\]
\begin{figure}[htbp]
	\centering
	\includegraphics[width=1\linewidth]{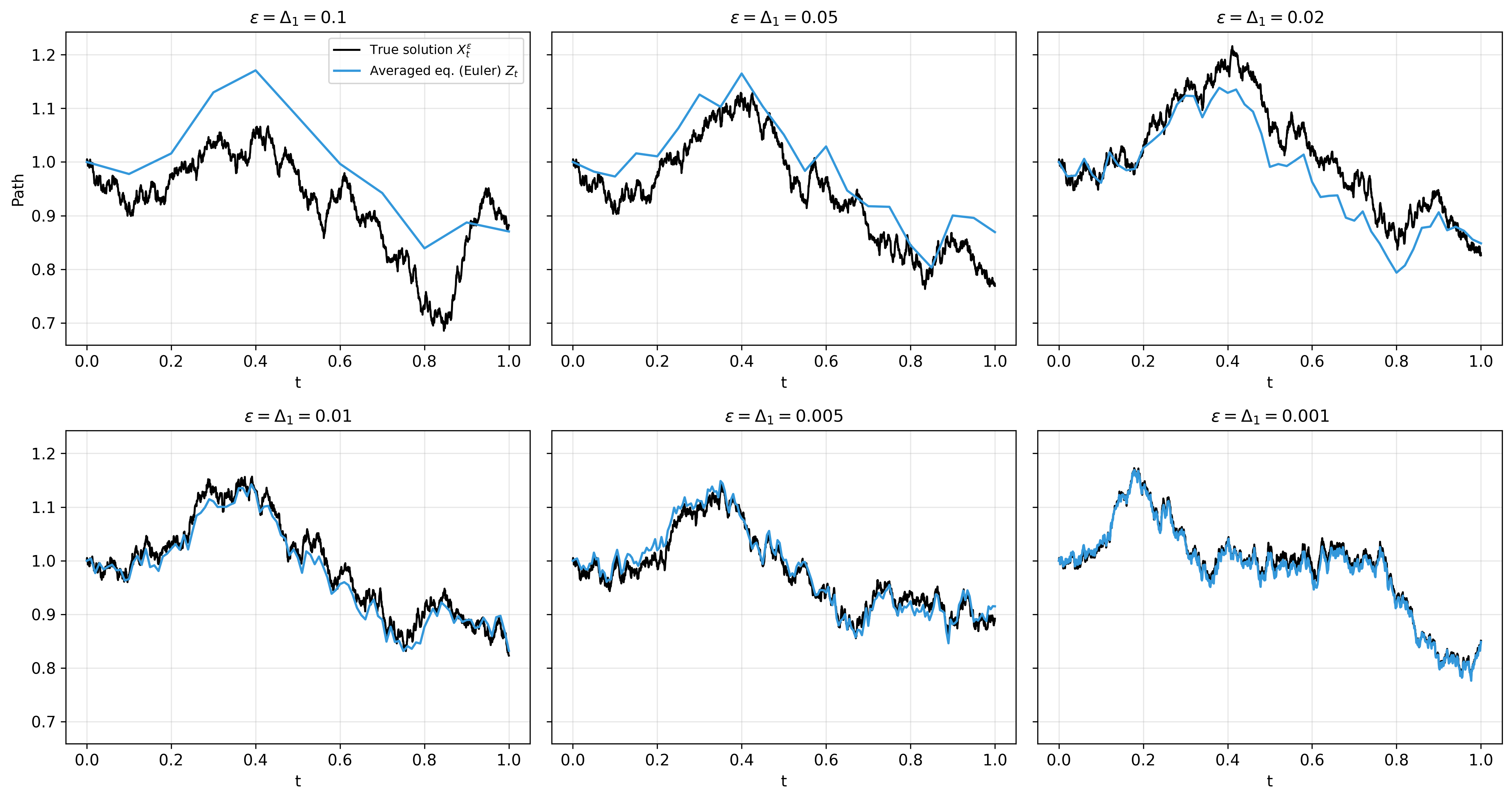}
	\caption{Comparison of sample paths of \(X_t^\varepsilon\) and \(Z_t\) as \(\varepsilon=\Delta_1 \downarrow 0\)}
	\label{fig:Averaging_principle01}
\end{figure}
We randomly select one sample path, set the terminal time \(T=1\), and impose \(\varepsilon=\Delta_1\) with \(\varepsilon=\Delta_1 = 0.1,\,0.05,\,0.02,\,0.01,\,0.005,\,0.002,\,0.001\). The corresponding approximation results are shown in Figure \ref{fig:Averaging_principle01}. From Figure \ref{fig:Averaging_principle01}, we observe that noticeable deviations between sample trajectories appear when \(\varepsilon=\Delta_1\) takes large values. As \(\varepsilon\) and \(\Delta_1\) decrease simultaneously,  deviations between sample trajectories diminish markedly, and \(Z_t\) provides a close approximation to \(X_t^\varepsilon\).  Compared with the results in Figure \ref{fig:direct_euler01}, this demonstrates that the EM approximation based on the averaged equation can overcome the divergence issue arising from direct EM discretization of \(X_t^\varepsilon\) for small \(\varepsilon\).
\begin{figure}[htbp]
	\centering
	\includegraphics[width=0.6\linewidth]{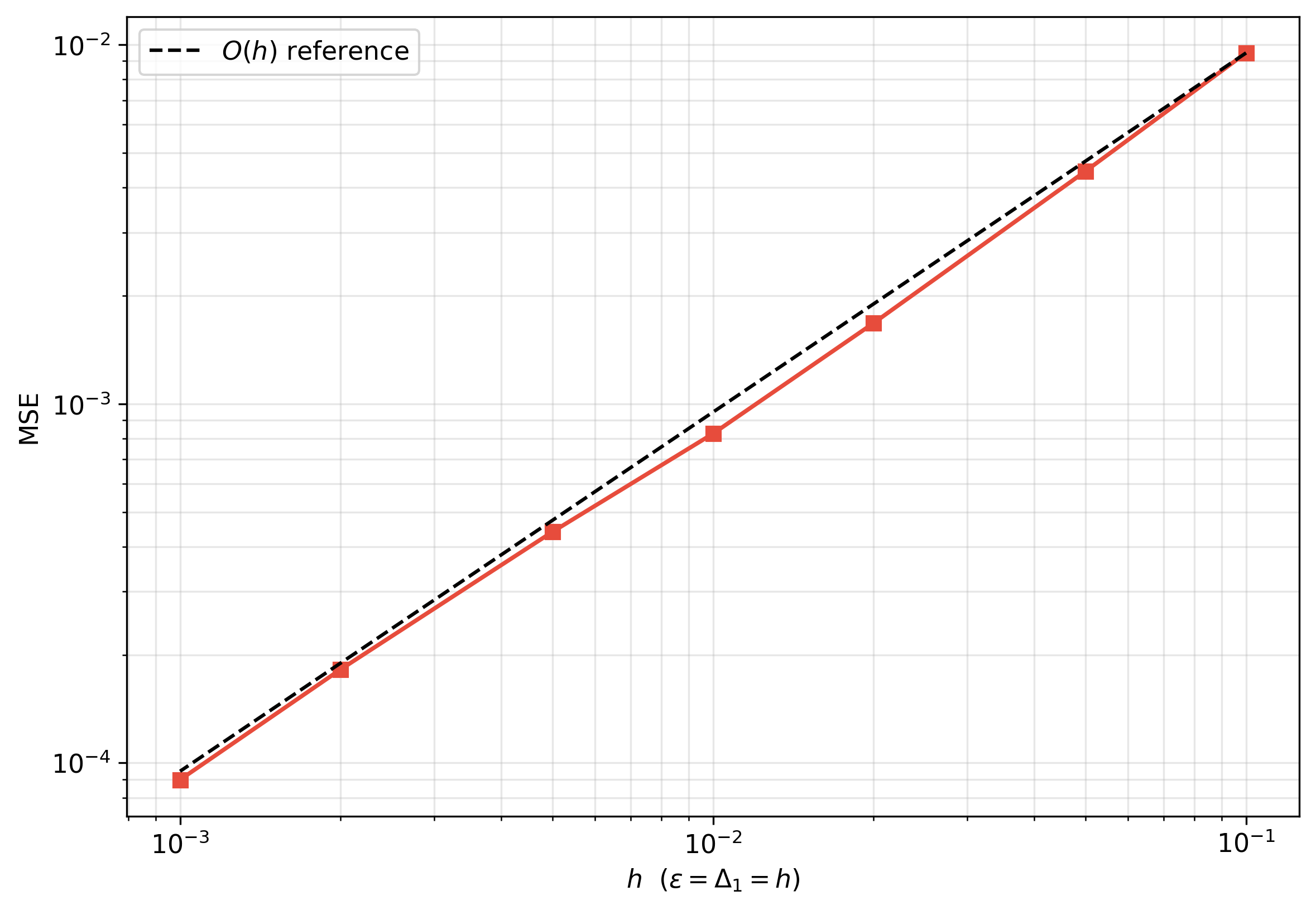}
	\caption{Log-log plot of the MSE between \(X_{0.5}^{\varepsilon}\) and \(Z_{0.5}\) against \(h=\varepsilon=\Delta_1\).}
	\label{fig:Convergence_of_Direct-EM01}
\end{figure}

To further verify the theoretical result stated in Theorem \ref{Thm:qi3}, we compute the sample mean squared error (MSE) using 500 sample trajectories:
\begin{equation}
    \mathbb{E}|X_{t^{*}}^{\varepsilon}-Z_{t^{*}}|^2 \approx \frac{1}{500}\sum_{j=1}^{500}\big|X^{\varepsilon,(j)}_{t^{*}}-Z_{t^{*}}^{(j)}\big|^2,
\end{equation}
where $X^{\varepsilon,(j)}_{t^{*}}$ and $Z_{t^{*}}^{(j)}$, $1\leq j\leq 500,$ are sequences of independent copies of $X_{t^{*}}^{\varepsilon}$ and $Z_{t^{*}}$, respectively. Note that $X^{\varepsilon,(j)}_{t^{*}}$ and $Z_{t^{*}}^{(j)}$ are generated by a same Brownian motion.  Figure \ref{fig:Convergence_of_Direct-EM01} illustrates the MSE between  \(X_t^{\varepsilon}\) and  \(Z_t\) evaluated at the fixed time \(t^{*}=0.5\), for seven distinct step sizes \(h=\varepsilon=\Delta_1=0.1,\, 0.05,\, 0.02,\, 0.01,\, 0.005,\, 0.002,\, 0.001\). The data points roughly lie along a straight line on the log–log scale, demonstrating that the MSE decays at first order with respect to \(h\). This numerically confirms the first-order convergence, in the mean-square sense, of the EM scheme associated with the averaged equation.

\begin{remark}
In numerical convergence analysis, the MSE typically obeys a power-law scaling with respect to the step size $h$, i.e., $\mathrm{MSE} \propto h^\alpha$. Taking the logarithm on both sides reduces the relation to a linear form:
\begin{equation*}
\log(\mathrm{MSE}) = \alpha \log(h) + C
\end{equation*}
where $C$ is a constant independent of $h$. On a log-log coordinate system, this relation appears as a straight line, whose slope $\alpha$ corresponds exactly to the order of convergence. This graphical representation offers an intuitive means to verify the error decay rate.
In this experiment, the MSE is given as the following table. 
\begin{table}[htbp]
  \centering
  \rmfamily\upshape 
  \begin{tabular}{rccccccc}
  \toprule
  $h_i~(1\leq i\leq 7)$      & 0.1      & 0.05     & 0.02     & 0.01     & 0.005    & 0.002    & 0.001    \\
  \midrule
  $MSE_i~(1\leq i\leq 7)$    & 0.009483 & 0.004445 & 0.001677 & 0.000826 & 0.000440 & 0.000182 & 0.000090 \\
  \bottomrule
  \end{tabular}
\end{table}
\\
As a result, we can see that $\alpha=\frac{\log (MSE_{i}/MSE_{i-1})}{\log (h_{i}/h_{i-1})}\approx 1$ for all $2\leq i\leq 7$.
\end{remark}

\noindent\textbf{Example (b)} 
As a running example throughout the paper, we consider a slow-fast system whose fast component is a high-dimensional switching process with a ring structure, inspired by the random walk on a ring lattice studied in \cite[Section 3.8]{cocconi2020entropy}.
This example is suitable for verifying our method because the invariant measure of the ring-shaped CTMC has no explicit expression for general $N$ ($N<\infty$).
We consider the following slow-fast system:
\begin{equation}\label{eq:ring-example-system}
	\left\{\begin{array}{l}
		\dif X_{t}^{\varepsilon}=b\left(X_{t}^{\varepsilon}, \Lambda_{t}^{\varepsilon}\right) \dif t+ \sigma\left(X_{t}^{\varepsilon}\right)\dif W_{t}, \\
		\mathbb{P}\left(\Lambda_{t+\Delta}^{\varepsilon}=j \mid \Lambda_{t}^{\varepsilon}=i, X_{s}^{\varepsilon}, \Lambda_{s}^{\varepsilon}, s \leq t\right)=\left\{\begin{array}{l}
			\varepsilon^{-1} q_{i j}\left(X_{t}^{\varepsilon}\right) \Delta+o(\Delta), \quad i \neq j, \\
			1+\varepsilon^{-1} q_{i i}\left(X_{t}^{\varepsilon}\right) \Delta+o(\Delta), \quad i=j,
		\end{array}\right. \\
		\left(X_{0}^{\varepsilon}, \Lambda_{0}^{\varepsilon}\right)= (7/10, 1) \in \mathbb{R} \times \mathbb{S},~ t\geq 0,
	\end{array}\right.
\end{equation}
where the coefficients are defined  by
\[
b(x, i) = x \cdot \sin\left(\frac{2\pi i}{N}\right),\quad \sigma(x) = \frac{4}{5}x,
\]
and the state-dependent transition rate matrix $Q(x) = (q_{ij}(x))_{i,j \in \mathbb{S}}$ is defined as follows:
\begin{align*}
	q_{i(i+1)}(x)& = 2 + \sin\left(\frac{2\pi i}{N} + x\right), \quad\forall i=1,...,N-1,\;q_{N1}(x)=2+\sin x;\\
	q_{i(i-1)}(x) &= 2 + \cos\left(\frac{2\pi i}{N} + x\right), \quad\forall i=2,...,N,\;q_{1N}(x)=2+\cos x;\\
q_{11}(x) &= -\left(q_{12}(x) + q_{1N}(x)\right),\quad q_{NN}(x) = -\left(q_{N1}(x) + q_{N(N-1)}(x)\right);\\
	q_{ii}(x) &= -\left(q_{i(i+1)}(x) + q_{i(i-1)}(x)\right),\quad \forall i=2,...,N-1;\\
	q_{ij}(x) &= 0, \quad \forall 1<|i-j|< N-1.
\end{align*}
For a clearer illustration of the generator, we present the state transition diagram of a ring-shaped CTMC for the case \(N = 7\) as follows.
\begin{figure}[htbp]
	\centering
	\begin{tikzpicture}[
		>=Stealth,
		state/.style={circle,draw,minimum size=8mm,inner sep=0pt},
		cw/.style={blue,thick},
		ccw/.style={red,thick},
		]
		
		\def\N{7}
		\def\R{3}
		
		\foreach \i in {1,...,\N} {
			\node[state] (n\i) at ({90-(\i-1)*360/\N}:\R) {\i};
		}
		
		\foreach \i in {1,...,\N} {
			\pgfmathtruncatemacro{\j}{mod(\i,\N)+1}
			\draw[cw,->] (n\i) to[bend left=38] 
			node[midway,fill=white,inner sep=1pt,font=\tiny] {$q_{\i,\j}(x)$} (n\j);
		}
		
		\foreach \i in {1,...,\N} {
			\pgfmathtruncatemacro{\k}{int(mod(\i-2+\N,\N)+1)}
			\draw[ccw,->] (n\i) to[bend left=38] 
			node[midway,fill=white,inner sep=1pt,font=\tiny] {$q_{\i,\k}(x)$} (n\k);
		}
		
		\node[anchor=north west,font=\small] at (-3.5,-3.5) {
			\begin{tabular}{ll}
				\textcolor{blue}{$\rightarrow$} & $q_{i(i+1)}(x)=2+\sin(\frac{2\pi i}{N}+x)$ \\
				\textcolor{red}{$\rightarrow$} & $q_{i(i-1)}(x)=2+\cos(\frac{2\pi i}{N}+x)$
			\end{tabular}
		};
	\end{tikzpicture} 
	\caption{State transition diagram of a ring-shaped CTMC ($N=7$)}
	\label{fig:ctmc-ring}
\end{figure}

By the averaging principle, as $\varepsilon \to 0$, the slow process $X_t^\varepsilon$ converges strongly to the solution $\bar{X}_t$ of the averaged equation:
\[
\dif \bar{X}_t = \bar{b}(\bar{X}_t) \dif t + \sigma(\bar{X}_t)\dif W_t, \quad
	\bar{X}_0 = 7/10,
\]
where the averaged drift coefficient is:
\[
\bar{b}(x) =  \sum_{i=1}^Nx  \sin\left(\frac{2\pi i}{N}\right)\mu_i^x,
\]
and $\mu^x = (\mu_1^x, \dots, \mu_N^x)^{\top}$ is the unique invariant measure of the frozen CTMC with generator $Q(x)$.


For this example, the true solution \(X_t^{\varepsilon}\) is unavailable. To further verify the theoretical result stated in Theorem \ref{Thm:qi3}, we thus evaluate the sample MSE between  the averaged equation \(\bar{X}_t\) and its EM scheme \(Z_t\) using 500 sample trajectories for the case of $N=100$:
\begin{equation}
    \mathbb{E}|\bar{X}_{t^{*}}-Z_{t^{*}}|^2 \approx \frac{1}{500}\sum_{j=1}^{500}\big|\bar{X}^{(j)}_{t^{*}}-Z_{t^{*}}^{(j)}\big|^2,
\end{equation}
where $\bar{X}^{(j)}_{t^{*}}$ and $Z_{t^{*}}^{(j)}$, $1\leq j\leq 500,$ are sequences of independent copies of $\bar{X}^{(j)}_{t^{*}}$ and $Z_{t^{*}}$, respectively. We take the EM numerical solution of the averaged equation $\bar{X}_{t}$ with an extremely fine step size $10^{-4}$ as the reference solution $\bar{X}_{t}^{ref}$.  Figure \ref{fig:direct_method_efficiency_N} similarly demonstrates that the MSE decays at first order with respect to \(\Delta_1\), which confirms the theoretical result established in Theorem \ref{Thm:qi3}.
\begin{figure}[htbp]
	\centering
	\includegraphics[width=0.6\linewidth]{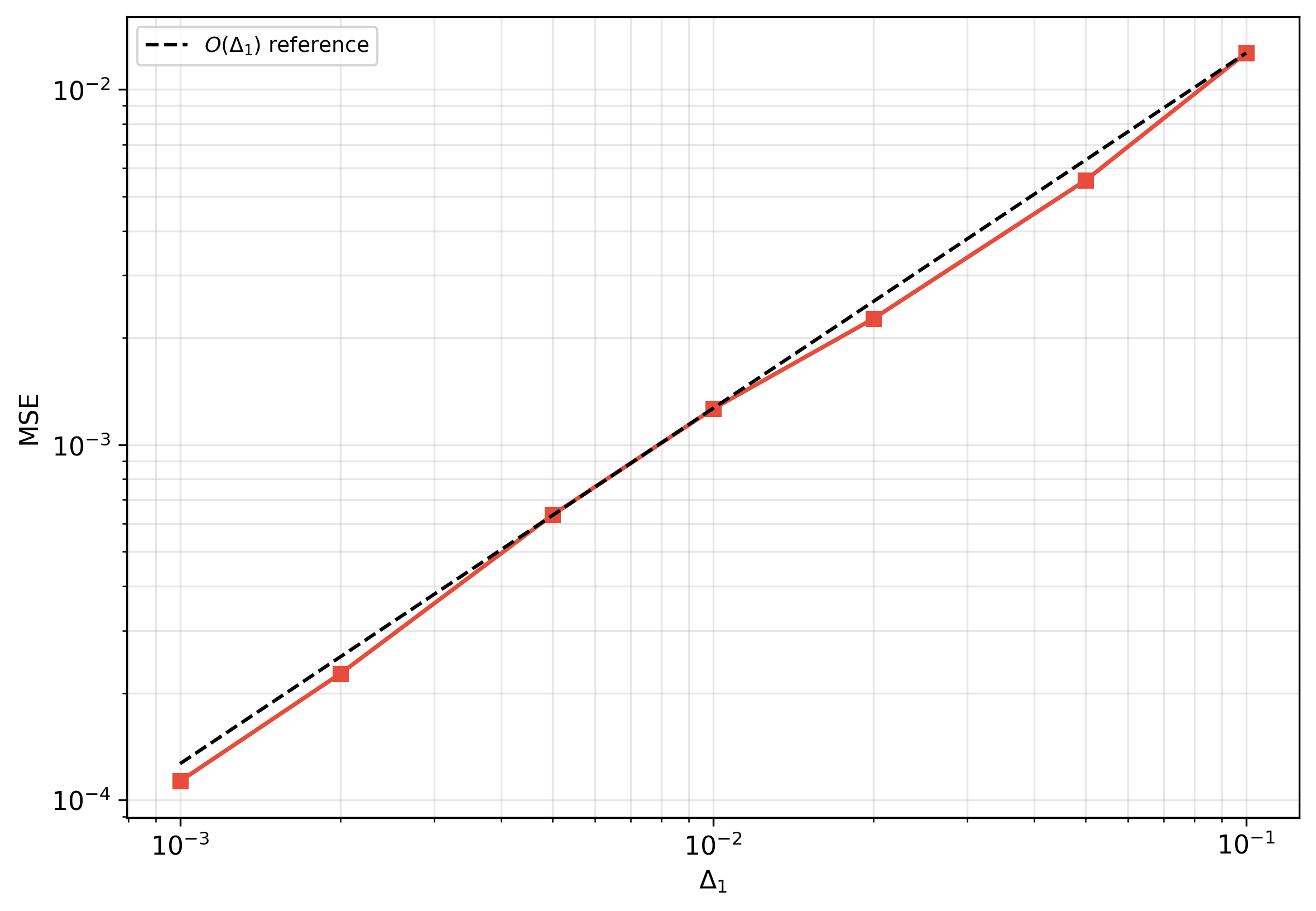}
	\caption{Log-log plot of the MSE between \(\bar{X}_{0.5}^{ref}\) and \(Z_{0.5}\) against \(\Delta_1\).}
	\label{fig:direct_method_efficiency_N}
\end{figure}

Finally, we set the target MSE at \(t^{*}=0.5\) to be \(10^{-4}\) and report the computational efficiency  for this example.  As shown in Table \ref{table:ring efficient}, the runtime of Algorithm \ref{alg:hmm-euler} rises rapidly as $N$ increases. This indicates that the computational cost grows significantly with the size of the Markov state space, which reveals the limitation of Algorithm \ref{alg:hmm-euler} when facing Markov chains with a large number of states.
\begin{table}[htbp]
  \centering
  \caption{Runtime (s) of Algorithm 1  with different $N$.}
  \begin{tabular}{rc}
    \toprule
    $N$ & Algorithm 1  \\
    \midrule
   10   & 0.0011 \\
   50   & 0.0194 \\
   100  & 0.0394 \\
   200  & 0.1439 \\
   500  & 1.2223 \\
   1000 & 5.4191 \\
    \bottomrule
  \end{tabular}
  \label{table:ring efficient}
\end{table}

    For this ring-shaped CTMC, the invariant measure $\mu^x$ has no explicit analytical expression for general $N$ and $x$. Direct computation of \(\mu^x\) requires solving the $N$-dimensional linear system \(\mu^x Q(x) = 0\) and \(\mu^x \mathbbm{1} = 1\). Typically, such linear systems are solved by the singular value decomposition with a computational complexity of \(O(N^3)\)(see \cite[Chapter 8.7]{golub2013matrix} for example), which becomes prohibitive for large $N$.
\section{Algorithm 2}\label{SEC:CLASSIC}

From Example (b) in Section \ref{SEC:Direct-EM}, Algorithm \ref{alg:hmm-euler} exhibits \(O(N^3)\) computational complexity, which makes it computationally infeasible for large $N$. Moreover, the invariant measure of the fast process generally does not admit a closed-form expression. Consequently, the averaged drift \(\bar{b}(x)\) associated with  \(Z_t\)  is unavailable analytically.

Recall that for a frozen CTMC $\Lambda_t^{x}$ with the invariant measure $\mu^x$, if follows from the  classic ergodicity property that for a proper function $f:\mathbb{S}\to \mathbb{R}$,
\begin{equation}\label{EQ:FROZEN:ERGO}
\frac{1}{M}\sum_{m=0}^{M-1} f(\Lambda_{m\Delta}^x)\to	\mu^x(f),\quad M\to\infty. 
\end{equation}
Note that the left-hand side of the above equation depends only on the values of the Markov chain at discrete time points. 
Motivated by this observation,
we can approximate \(\bar{b}(x)\) by its DTMC simulation-based estimator \(\tilde{b}_{\Delta2,M}(x)\), which further yields the improved numerical approximation \(\hat{Z}_t\) for \(Z_t\). Consequently, the problem reduces to constructing a DTMC with a prescribed transition probability matrix. Note that the number of states in the switching process is also assumed to be finite throughout this section, thereby ensuring the solvability of the transition probability matrix $\e^{Q(x)t}$ for any $t>0$.

Then,
we present Algorithm \ref{alg:discrete} below:
\begin{algorithm}[htbp]
\caption{}
\label{alg:discrete}
\begin{algorithmic}[1]
\Require initial value $(x_0,i_0)$, macro step $\Delta_1$, micro step $\Delta_2$, number of micro steps $M$, terminal time $T$, generator $Q(\cdot)$
\State Initialize $\hat{Z}_0\gets x_0$.
\For{$n = 0,1,2,\dots,\lfloor T/\Delta_1\rfloor$-1}  \Comment{\textbf{Macro solver}}
    \State Freeze the slow variable $x \gets \hat{Z}_{n\Delta_1}$
    \State Set transition matrix $P(x) = \mathrm{e}^{\Delta_2 Q(x)}$
    \State Initialize $\Lambda_0^{x,i_0} \gets i_0$, $\tilde{b}_{\Delta2,M}(x) \gets 0$
    \For{$m = 0,1,\dots,M-1$} \Comment{\textbf{Micro solver}}
        \State Generate $\Lambda_{(m+1)\Delta_2}^{x,i_0}$ from $\Lambda_{m\Delta_2}^{x,i_0}$ via the inverse transform method with $P(x)$
        \State $\tilde{b}_{\Delta2,M}(x) \gets \tilde{b}_{\Delta2,M}(x) + b\!\left(x,\Lambda_{m\Delta_2}^{x,i_0}\right)$
    \EndFor
    \State $\tilde{b}_{\Delta2,M}(x) \gets \dfrac{1}{M}\,\tilde{b}_{\Delta2,M}(x)$ 
    \State Draw Brownian increment $\Delta W_n = W_{(n+1)\Delta_1}-W_{n\Delta_1}$
    \State $\hat{Z}_{(n+1)\Delta_1} \gets \hat{Z}_{n\Delta_1}
        + \tilde{b}_{\Delta2,M}(\hat{Z}_{n\Delta_1})\Delta_1
        + \sigma(\hat{Z}_{n\Delta_1})\Delta W_n$ \Comment{\textbf{EM update}}
\EndFor
 \State \Return $\{\hat{Z}_{n\Delta_1}\},~0\leq n\leq \lfloor T/\Delta_1\rfloor$
\end{algorithmic}
\end{algorithm}

\subsection{Strong convergence of Algorithm 2}
In this section, we establish the strong convergence result for Algorithm \ref{alg:discrete}; see Theorem \ref{Thm:strongr}. First, we establish the mean squared error between the average drift $\bar{b}(x)$ and the discrete-time drift estimator $\tilde{b}_{\Delta2,M}(x)$ via the Poisson equation, which is inspired from \cite{MST2010}; see Lemma  \ref{lem:guji}. Next, we derive the strong convergence of both the standard EM scheme \(Z_t\) and the multiscale EM numerical solution \(\hat{Z}_t\); see Lemma \ref{lem:third3}. Finally, by combining the above results with the averaging principle, we obtain the desired strong convergence conclusion. For readability, we postpone the proofs of Lemmas \ref{lem:guji}, \ref{lem:juguj}, and \ref{lem:third3} to the end of this section.

Recall the Poisson equation associated with the generator $Q(x)$. Let $\Lambda_t^{x,i}$ be the CTMC with generator $Q(x)$ and initial state $i$. Suppose that $F(x,i) = (F^1(x,i), \dots, F^n(x,i))^T$ with $F^l(x,\cdot) \in \mathbb{R}^{N}$, $l = 1,2,\dots,n$, satisfies the centering condition:
\begin{equation}\label{eq:central}
	\sum_{i \in \mathbb{S}} F^l(x,i)\mu_i^x = 0, \quad \forall x \in \mathbb{R}^n, \, l = 1,2,\dots,n,
\end{equation}
and $Q(x) = (q_{ij}(x))_{i,j \in \mathbb{S}}$. Considering the following Poisson equation on $\mathbb{S}$:
\begin{equation}\label{eq:poisson}
	-Q(x)\Phi(x,\cdot)(i) = F(x,i),
\end{equation}
which is equivalent to
\[
-Q(x)\Phi^l(x,\cdot)(i) = -\sum_{j \in \mathbb{S}} q_{ij}(x)\Phi^l(x,j) = F^l(x,i), \quad l = 1,2,\dots,n,
\]
where $\Phi(x,\cdot) = (\Phi^1(x,\cdot), \dots, \Phi^n(x,\cdot))$ with $\Phi^l(x,\cdot) \in \mathbb{R}^{N}$, $l = 1,2,\dots,n$. For Poisson equation \eqref{eq:poisson}, we have the following result:
\begin{theorem}{$($\cite[Theorem 2.2]{SUNandXIE2025EJP}$)$}\label{thm:poisson1}
Suppose that (H2) holds, $F$ satisfies the centering condition  \eqref{eq:central} with $\|F(x,\cdot)\|_{\infty} < \infty$. Define
\[
\Phi(x,i) = \int_0^{\infty} \mathbb{E}F(x,\Lambda_t^{x,i}) \dif t.
\]
Then $\Phi(x,i)$ solves \eqref{eq:poisson} and satisfies
\begin{equation}\label{eq:yizhi}
    \|\Phi(x,\cdot)\|_\infty \leq C \|F(x,\cdot)\|_\infty.
\end{equation}
 Moreover, if $Q\in C^1(\mathbb{R}^n;\mathbb{R}^{N}\otimes\mathbb{R}^{N})$ and $F\in C^1(\mathbb{R}^n\times\mathbb{S};\mathbb{R}^n)$, then there exists a constant $C>0$ such that for any $x\in\mathbb{R}^n$,
\[
\|\partial_x\Phi(x,\cdot)\|_\infty \leq C\left(\|F(x,\cdot)\|_\infty\|\nabla Q(x)\|_\ell + \|\partial_x F(x,\cdot)\|_\infty\right).
\]
\end{theorem}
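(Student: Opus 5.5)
We prove Theorem \ref{thm:poisson1} in three stages: well-definedness and the sup bound \eqref{eq:yizhi}, verification of the Poisson equation, and the derivative bound. Fix $x$ and a component $l$.

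\textbf{Sup bound.} By the centering condition \eqref{eq:central}, $\mu^x(F^l(x,\cdot))=0$. Hence
\[
\mathbb{E}F^l(x,\Lambda_t^{x,i})=\sum_j \bigl(p^x_{ij}(t)-\mu^x_j\bigr)F^l(x,j),
\]
which is bounded in absolute value by $2\|F(x,\cdot)\|_\infty\|p^x_{i\cdot}(t)-\mu^x\|_{\rm var}$. By (H2)(iii) this is at most $C\mathrm{e}^{-\lambda t}\|F(x,\cdot)\|_\infty$. The integral defining $\Phi$ therefore converges absolutely and uniformly in $i$ and $x$, and integrating in $t$ gives \eqref{eq:yizhi} with constant $2C/\lambda$.

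\textbf{Poisson equation.} Write $P_t^x g(i)=\mathbb{E}g(\Lambda_t^{x,i})$. By the semigroup property,
\[
P^x_h\Phi(x,\cdot)(i)=\int_h^\infty P^x_tF(x,\cdot)(i)\,\dif t ,
\]
where the exchange of sum and integral is justified by the uniform exponential bound. Thus
\[
\tfrac1h\bigl(P^x_h\Phi-\Phi\bigr)(i)=-\tfrac1h\int_0^hP^x_tF(i)\,\dif t\to -F(x,i)\quad\text{as } h\downarrow 0 .
\]
The left side converges to $Q(x)\Phi(x,\cdot)(i)$. For finite $N$ this is immediate. For countable $\mathbb{S}$, the row sums $\sum_{j\ne i}q_{ij}(x)$ are finite by (H3), and Kolmogorov's backward equation applies to bounded functions. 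This yields $-Q(x)\Phi=F$.

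\textbf{Derivative bound.} The natural route avoids differentiating transition probabilities in $x$. Instead, differentiate the equation itself. Formally,
\[
-Q(x)\partial_x\Phi(x,\cdot)=\partial_xF(x,\cdot)+\partial_xQ(x)\Phi(x,\cdot)=:G(x,\cdot).
\]
The right side $G$ need not be centered, so we proceed as follows.

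1. Pass to the centered solution. Define $\Phi_0=\Phi-\mu^x(\Phi)$; the definition in the statement already satisfies $\mu^x(\Phi)=0$ after integrating against $\mu^x$ and using invariance. Then apply $\mu^x$ to the differentiated equation. Since $\mu^xQ(x)=0$, we get $\mu^x(G)=0$, so $G$ is automatically centered.

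2. Solve for the derivative. The solution of $-Q(x)\Psi=G$ is unique up to constants, because irreducibility forces the kernel of $Q(x)$ to be the constants. Hence $\partial_x\Phi=\int_0^\infty P^x_tG\,\dif t+c(x)$, where $c(x)=\partial_x\mu^x(\Phi)$-type terms vanish by the normalization.

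3. Bound. Using the sup bound again,
\[
\|\partial_x\Phi\|_\infty\le C\|G\|_\infty\le C\bigl(\|\partial_xF\|_\infty+\|\nabla Q(x)\|_\ell\,\|\Phi\|_\infty\bigr).
\]
Combining with \eqref{eq:yizhi} gives the claim.

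\textbf{Main obstacle.} The hard step is making the differentiation rigorous. Cleanly, this means showing that $\Phi(\cdot,i)$ is differentiable. The cleanest approach is a difference quotient. Set $D_h=(\Phi(x+h e,\cdot)-\Phi(x,\cdot))/h$. It satisfies
\[
-Q(x)D_h=\frac{F(x+he)-F(x)}{h}+\frac{Q(x+he)-Q(x)}{h}\,\Phi(x+he,\cdot).
\]
One then bounds $D_h$ in $\|\cdot\|_\infty$ uniformly in $h$, using the $\mu^x$-centering trick together with the Lipschitz control of $\mu^x$ (in total variation) in $x$. That Lipschitz control follows from (H3) and uniform ergodicity via
\[
\mu^y-\mu^x=\int_0^\infty\mu^y\bigl(Q(y)-Q(x)\bigr)P^x_t\,\dif t .
\]
Finally, pass $h\to0$ using the continuity provided by $Q\in C^1$ and $F\in C^1$.
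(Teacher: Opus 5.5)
The paper does not prove this statement; it is quoted from \cite{SUNandXIE2025EJP}, so your argument has to stand on its own. Your sup bound and your verification of the Poisson equation are fine. Step~2 of the derivative part, however, is false: the additive constant does \emph{not} vanish. Since $\mu^x\bigl(\int_0^\infty P^x_tG\,\dif t\bigr)=0$, the constant is $c(x)=\mu^x(\partial_x\Phi(x,\cdot))$. Differentiating the normalization $\mu^x(\Phi(x,\cdot))=0$ gives $c(x)=-\sum_j\partial_x\mu^x_j\,\Phi(x,j)$, which is not $0$ in general.

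A concrete check: take $N=2$, $q_{12}\equiv1$, $q_{21}(x)=\beta(x):=2+\sin x$, and $F(x,\cdot)=(1,-\beta(x))$. This $F$ is centered for $\mu^x=(\beta,1)/(1+\beta)$. Here $P^x_tF=\mathrm{e}^{-(1+\beta)t}F$, so $\Phi=F/(1+\beta)$. One computes $G=\partial_xF+\partial_xQ\,\Phi\equiv0$, while $\partial_x\Phi(x,\cdot)=-\beta'(x)(1+\beta(x))^{-2}(1,1)\neq0$. So both the representation $\partial_x\Phi=\int_0^\infty P^x_tG\,\dif t$ and the inequality $\|\partial_x\Phi\|_\infty\le C\|G\|_\infty$ in Step~3 are wrong as written.

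The claimed estimate is nevertheless true, and the missing ingredient is already in your last paragraph. The correct bound is
\[
\|\partial_x\Phi(x,\cdot)\|_\infty\le C\|G(x,\cdot)\|_\infty+\|\partial_x\mu^x\|_{\rm var}\,\|\Phi(x,\cdot)\|_\infty .
\]
The estimate $\|\partial_x\mu^x\|_{\rm var}\le C\|\nabla Q(x)\|_\ell$ follows by dividing your identity $\mu^y-\mu^x=\int_0^\infty\mu^y(Q(y)-Q(x))P^x_t\,\dif t$ by $|y-x|$. The signed measure $\mu^y(Q(y)-Q(x))$ has zero total mass, because the rows of $Q$ sum to zero, and its total variation is at most $\|Q(y)-Q(x)\|_\ell$. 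So (H2)(iii) makes its image under $P^x_t$ decay like $\mathrm{e}^{-\lambda t}$. In the difference-quotient version, this is precisely the term $\mu^x(D_h)=h^{-1}(\mu^x-\mu^{x+he})(\Phi(x+he,\cdot))$, which must be estimated rather than dropped. Keeping this term and applying the sup bound gives $C\bigl(\|\partial_xF(x,\cdot)\|_\infty+\|\nabla Q(x)\|_\ell\|F(x,\cdot)\|_\infty\bigr)$, as required.
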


Using the Poisson equation technique, we obtain the following error bound between $\bar{b}(x)$ and $\tilde{b}_{\Delta_2,M}(x)$.
\begin{lemma}\label{lem:guji}
Suppose that (H1)-(H3) hold. Then, for any $p\geq 2$, there exists a constant $C_p>0$ such that
    \begin{equation*}
        \E\left|\bar{b}(x)-\tilde{b}_{\Delta_2,M}(x)\right|^p\leq C_p(1+|x|)^{3p/2}\left(\Delta_2+\frac{1}{(M\Delta_2)^{p/2}}\right).
    \end{equation*}
\end{lemma}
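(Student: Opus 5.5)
Fix $x$ and write $\Lambda_m:=\Lambda^{x,i_0}_{m\Delta_2}$, the frozen chain sampled on the grid $\Delta_2\mathbb{N}$; it is a DTMC with transition matrix $P(x)=\mathrm{e}^{\Delta_2Q(x)}$. Set $F(x,i):=b(x,i)-\bar b(x)$. Then $F$ satisfies the centering condition \eqref{eq:central}. By (H1), $|b(x,i)-b(x,j)|\le C$, so $\|F(x,\cdot)\|_\infty\le C$ (indeed $\le C(1+|x|)$). Theorem \ref{thm:poisson1} gives a solution $\Phi(x,\cdot)$ of the Poisson equation $-Q(x)\Phi=F$ with $\|\Phi(x,\cdot)\|_\infty\le C\|F(x,\cdot)\|_\infty$. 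We have
\[
\bar b(x)-\tilde b_{\Delta_2,M}(x)=-\frac1M\sum_{m=0}^{M-1}F(x,\Lambda_m),
\]
and the plan is to express this sum through $\Phi$ via a discrete-time martingale decomposition, as in \cite{MST2010}.

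The decomposition comes from Dynkin's formula. Since $\frac{\mathrm d}{\mathrm dt}P^x_t\Phi=P^x_tQ(x)\Phi=-P^x_tF$,
\[
P(x)\Phi(x,\cdot)(i)-\Phi(x,i)=-\int_0^{\Delta_2}P^x_sF(x,\cdot)(i)\,\mathrm ds=-\Delta_2F(x,i)-\int_0^{\Delta_2}\big(P^x_sF-F\big)(i)\,\mathrm ds .
\]
Because $\|P^x_s-I\|_\ell\le 2K(x)s\le C(1+|x|)s$ by (H3), the remainder $r(x,i)$ satisfies $|r(x,i)|\le C(1+|x|)\|F\|_\infty\Delta_2^2$. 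Summing $\Phi(\Lambda_{m+1})-\Phi(\Lambda_m)=[\Phi(\Lambda_{m+1})-P\Phi(\Lambda_m)]+[P\Phi(\Lambda_m)-\Phi(\Lambda_m)]$ over $m$ and dividing by $M\Delta_2$ gives
\[
\frac1M\sum_{m=0}^{M-1}F(x,\Lambda_m)=\frac{\Phi(x,\Lambda_0)-\Phi(x,\Lambda_M)}{M\Delta_2}+\frac{\mathcal M_M}{M\Delta_2}-\frac{1}{M\Delta_2}\sum_{m=0}^{M-1} r(x,\Lambda_m),
\]
where $\mathcal M_M=\sum_{m<M}\xi_{m+1}$ with martingale differences $\xi_{m+1}=\Phi(\Lambda_{m+1})-P\Phi(\Lambda_m)$.

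Each of the three terms is then bounded in $L^p$.
\begin{itemize}
\item \textbf{Boundary term:} bounded by $2\|\Phi\|_\infty/(M\Delta_2)\le C(1+|x|)/(M\Delta_2)$, so its $p$-th moment is $\lesssim(1+|x|)^p(M\Delta_2)^{-p}\le(1+|x|)^p(M\Delta_2)^{-p/2}$ when $M\Delta_2\ge1$. When $M\Delta_2<1$ the whole estimate is trivial, since $|\bar b-\tilde b|\le C$.
\item \textbf{Remainder term:} bounded by $C(1+|x|)^2\Delta_2$, contributing $(1+|x|)^{2p}\Delta_2^p$. This must be reconciled with the claimed power $(1+|x|)^{3p/2}$. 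Interpolating with the trivial bound $C(1+|x|)$ on the same quantity should give the $3/2$ exponent, or a slightly cruder exponent; either is harmless for the later use.
\item \textbf{Martingale term:} by Burkholder's inequality, $\mathbb E|\mathcal M_M|^p\le C_p\,\mathbb E\big(\sum_m|\xi_{m+1}|^2\big)^{p/2}$. The key observation is that $\xi_{m+1}$ is small: $\Lambda$ moves in a step of length $\Delta_2$ only with probability $\le K(x)\Delta_2$. Hence $\mathbb E[|\xi_{m+1}|^2\mid\mathcal F_m]\le 4\|\Phi\|_\infty^2\,C(1+|x|)\Delta_2$. Using this conditional variance inside Rosenthal's (or Burkholder's) inequality, together with $|\xi|\le 2\|\Phi\|_\infty$, gives
\[
\mathbb E|\mathcal M_M|^p\lesssim\big(M(1+|x|)^3\Delta_2\big)^{p/2}+M(1+|x|)^p .
\]
Dividing by $(M\Delta_2)^p$, the first part becomes $(1+|x|)^{3p/2}(M\Delta_2)^{-p/2}$, which explains the $3p/2$. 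The second part is $(1+|x|)^pM^{1-p}\Delta_2^{-p}$, which must be absorbed using $\Delta_2\le1$.
\end{itemize}

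The main obstacle is this last martingale estimate, specifically handling the Rosenthal correction term $M(1+|x|)^p/(M\Delta_2)^p$. For $p>2$ it is not obviously dominated by $(M\Delta_2)^{-p/2}+\Delta_2$ when $\Delta_2$ is tiny. If it cannot be absorbed directly, I would first try exploiting $\sum_m|\xi_{m+1}|^2\le 4\|\Phi\|_\infty^2\,\#\{\text{jumps of the CTMC on }[0,M\Delta_2]\}$. The jump count is dominated by a Poisson variable with mean $C(1+|x|)M\Delta_2$, whose $p/2$-moment is $\lesssim((1+|x|)M\Delta_2)^{p/2}+1$. This yields exactly $(1+|x|)^{3p/2}(M\Delta_2)^{-p/2}$ plus the boundary-type term $(M\Delta_2)^{-p}$ already handled. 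Collecting the three terms gives the statement of Lemma \ref{lem:guji}.
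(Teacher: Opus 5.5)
Your route differs from the paper's and works, but as written the martingale estimate, which is the heart of the argument, is not closed.

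\textbf{The gap.} The ``Rosenthal correction'' $M(1+|x|)^p/(M\Delta_2)^p=(1+|x|)^p(M\Delta_2)^{1-p}\Delta_2^{-1}$ comes from bounding $\mathbb{E}|\xi_{m+1}|^p$ by $(2\|\Phi\|_\infty)^p$. That bound discards the fact that the chain rarely moves in one step. For fixed $M\Delta_2$ it grows like $\Delta_2^{-1}$, so, as you suspect, it cannot be absorbed.

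Your fallback does not repair this as stated. The inequality $\sum_m|\xi_{m+1}|^2\le 4\|\Phi\|_\infty^2\,\#\{\text{jumps}\}$ is false: on a step where the chain stays at $i$ one has $\xi_{m+1}=\Phi(x,i)-P\Phi(x,\cdot)(i)=\sum_j p^x_{ij}(\Delta_2)\big(\Phi(x,i)-\Phi(x,j)\big)$, which is nonzero in general.

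\textbf{The fix.} It is one line. Because $|\xi_{m+1}|\le 2\|\Phi\|_\infty$,
\[
\mathbb{E}|\xi_{m+1}|^p\le (2\|\Phi\|_\infty)^{p-2}\,\mathbb{E}|\xi_{m+1}|^2\le C\|\Phi\|_\infty^{p}\,K(x)\Delta_2 .
\]
So the second Rosenthal term is at most $C\|\Phi\|_\infty^pK(x)M\Delta_2$. After division by $(M\Delta_2)^p$ it is $C(1+|x|)^{p+1}(M\Delta_2)^{1-p}\le C(1+|x|)^{3p/2}(M\Delta_2)^{-p/2}$, using $M\Delta_2\ge1$ and $p\ge2$. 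This is the discrete counterpart of the second term that Kunita's inequality produces in the paper's ergodic-error estimate.

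If you prefer the jump-count route, add the non-jump steps. There $|\xi_{m+1}|\le 2\|\Phi\|_\infty\min\{1,K(x)\Delta_2\}$, so they contribute at most $4\|\Phi\|_\infty^2K(x)M\Delta_2$ to the square function, which is of the same order.

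\textbf{The remainder term.} Drop the hedge ``or a slightly cruder exponent'', since a cruder exponent would not prove the lemma as stated. You already noted $\|F(x,\cdot)\|_\infty\le C$. With it, the remainder is deterministically bounded by $CK(x)\Delta_2$ and contributes $C(1+|x|)^p\Delta_2^p\le C(1+|x|)^{3p/2}\Delta_2$ for $\Delta_2\le1$. For $\Delta_2>1$ the lemma is trivial, because $|\bar b(x)-\tilde b_{\Delta_2,M}(x)|\le C$.

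\textbf{Comparison with the paper.} With these repairs your proof is correct and genuinely different from the paper's.

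The paper, for $M\Delta_2\ge1$, realizes the chain as the $\Delta_2$-skeleton of a CTMC driven by a Poisson random measure. It splits the error into two pieces. The continuous-time ergodic error is handled by It\^o's formula for $\Phi(x,\widetilde\Lambda^{x,i_0}_t)$ plus Kunita's first inequality. The path-versus-skeleton error is handled by Jensen's inequality plus $1-p^x_{ii}(t)\le q_i(x)t$, and produces the term $(1+|x|)^{p+1}\Delta_2$.

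You stay with the DTMC and its own filtration. You use the continuous-time Poisson solution only through the exact identity $P\Phi-\Phi=-\Delta_2F-r$. This needs no Skorokhod construction and no jump stochastic calculus, only a discrete Burkholder--Rosenthal inequality. The discretization effect appears as a deterministic $O(\Delta_2)$ remainder, i.e.\ as $\Delta_2^p$ rather than $\Delta_2$ in the $p$-th moment.

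Pushed one step further, your approach gives more. Since $P=\mathrm{e}^{\Delta_2Q(x)}$ preserves $\mu^x$, you could instead solve the discrete Poisson equation $(I-P)\Psi=\Delta_2F$. By (H2)(iii), its solution satisfies $\|\Psi\|_\infty\le C\|F\|_\infty$ uniformly in $\Delta_2\in(0,1]$. This removes the remainder altogether. It shows that the $\Delta_2$ term in the lemma comes from the method of proof, not from a genuine bias of the DTMC estimator.
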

\begin{proof}
We first consider the case $M\Delta_2<1$. Since both $\bar{b}(x)$ and $b(x,i)$ are bounded by $C(1+|x|)$ under (H1), we have
\begin{equation*}
\left|\bar{b}(x)-\tilde{b}_{\Delta_2,M}(x)\right|
\leq |\bar{b}(x)|+\frac{1}{M}\sum_{m=0}^{M-1}\left|b(x,\widetilde{\Lambda}_{m\Delta_2}^{x,i_0})\right|
\leq 2C(1+|x|),
\end{equation*}
and consequently
\begin{equation*}
\mathbb{E}\left|\bar{b}(x)-\tilde{b}_{\Delta_2,M}(x)\right|^p\leq C_p(1+|x|)^p.
\end{equation*}
As $M\Delta_2<1$ and $p\geq 2$ implies $(M\Delta_2)^{-p/2}>1$ and $p\leq 3p/2$, it follows that
\begin{equation*}
\mathbb{E}\left|\bar{b}(x)-\tilde{b}_{\Delta_2,M}(x)\right|^p
\leq C_p(1+|x|)^{3p/2}(M\Delta_2)^{-p/2}
\leq C_p(1+|x|)^{3p/2}\left(\Delta_2+\frac{1}{(M\Delta_2)^{p/2}}\right).
\end{equation*}
Thus, the desired estimate holds trivially when $M\Delta_2<1$. 

In what follows, we assume without loss of generality that $M\Delta_2\geq 1$. Since the error $\E|\bar{b}(x)-\tilde{b}_{\Delta_2,M}(x)|^p$ depends only on the finite-dimensional distribution of the discrete chain $\{\Lambda_{m\Delta_2}^{x,i_0}\}_{m\geq 0}$, we can perform the estimation on an equivalent probability space without loss of generality. To this aim, we first construct a homogeneous CTMC with the generator $Q(x)$ via Skorokhod's representation.
For any $x\in \mathbb{R}^d$, let
\[
\Delta_{1 2}(x)=\left[0, q_{1 2}(x)\right), \Delta_{1 l}(x)=\left[\sum_{j=2}^{l-1}q_{1j}(x),\sum_{j=2}^{l}q_{1j}(x)\right),\quad l\geq 3,
\]
and for each $k \geq 2$ and $x\in \mathbb{R}^d$, let
\[
\Delta_{k 1}(x)=\left[0, q_{k 1}(x)\right), \Delta_{k l}(x)=\left[\sum_{j=1,j\neq k}^{l-1}q_{kj}(x),\sum_{j=1,j\neq k}^{l}q_{kj}(x)\right),\quad l>1,l\neq k.
\]
Note that for each $k \in \mathbb{S} $ and $x \in \mathbb{R}^{d}$, $\{\Delta_{kl}(x):l\in \mathbb{S} \}$ are disjoint intervals, and the length of $\Delta_{kl}(x)$ equals $q_{kl}(x)$, which is bounded above by $C(1+|x|)$ thanks to (H3). We then define the jump function $h: \mathbb{R}^{d} \times \mathbb{S} \times [0, +\infty) \to \mathbb{R}$ by
\[
h(x, k, u)=\sum_{l \in \mathbb{S}}(l-k) \mathbbm{1}_{\triangle_{k l}(x)}(u).
\]
That is, for each $k \in \mathbb{S}$, if $u \in \triangle_{k l}(x)$, then $h(x, k, u)=l-k$; otherwise $h(x, k, u)=0$.

Let $N(\d t, \d u)$ be a Poisson random measure 
with Lebesgue measure on $[0, +\infty)$ as its characteristic measure. The evolution of $\widetilde{\Lambda}_t^{x,i_0}$ is given by
\begin{equation}\label{eq:skorohod}
	\text{d}\widetilde{\Lambda}_t^{x,i_0}=\int_{\left[0, H\right]} h(x, \widetilde{\Lambda}_{t-}^{x,i_0}, u) N(\text{d} t, \text{d} u), \quad \widetilde{\Lambda}_0^{x,i_0}=i_0.
\end{equation}
It is standard that $\widetilde{\Lambda}_t^{x,i_0}$ is a homogeneous CTMC with generator $Q(x)$.
By the basic property of CTMC, its sampling at discrete times $\{\widetilde{\Lambda}_{m\Delta_2}^{x,i_0}\}_{m\geq 0}$ is a DTMC with transition matrix $\mathrm{e}^{Q(x)\Delta_2}$, which shares the same finite-dimensional distribution as the original discrete chain $\{\Lambda_{m\Delta_2}^{x,i_0}\}_{m\geq 0}$.
It follows from the $C_r$-inequality that for any $p\geq 2$,
\begin{align}\label{eq:buli}
	     \nonumber\E|\bar{b}(x)-\tilde{b}_{\Delta_2,M}(x)|^p &= \E\left(\left|\bar{b}(x)-\frac{1}{M}\sum_{m=0}^{M-1} b\left(x, \Lambda_{m\Delta_2}^{x,i_0}\right)\right|^p\right)\\
     \nonumber&= \E\left(\left|\bar{b}(x)-\frac{1}{M}\sum_{m=0}^{M-1} b\left(x, \widetilde{\Lambda}_{m\Delta_2}^{x,i_0}\right)\right|^p\right)\\
	     &\leq 2^{p-1}\E\left(\left|\bar{b}(x)-\frac{1}{M\Delta_2}\int_{0}^{M\Delta_2}b(x,\widetilde{\Lambda}_{s}^{x,i_0})\dif s\right|^p\right)\\
	    \nonumber &\quad+2^{p-1}\E\left(\left|\frac{1}{M\Delta_2}\int_{0}^{M\Delta_2}b(x,\widetilde{\Lambda}_{s}^{x,i_0})\dif s-\frac{1}{M}\sum_{m=0}^{M-1} b\left(x, \widetilde{\Lambda}_{m\Delta_2}^{x,i_0}\right)\right|^p\right)\\
	    \nonumber &\leq \frac{2^{p-1}}{(M\Delta_2)^p}\E\left(\left|\int_{0}^{M\Delta_2}\left(\bar{b}(x)-b(x,\widetilde{\Lambda}_s^{x,i_0})\right)\dif s\right|^p\right)\\
	 \nonumber &\quad+\frac{2^{p-1}}{M\Delta_2}\sum_{m=0}^{M-1}\E\int_{m\Delta_2}^{(m+1)\Delta_2}\left|b(x,\widetilde{\Lambda}_s^{x,i_0})- b(x, \widetilde{\Lambda}_{m\Delta_2}^{x,i_0})\right|^p\dif s.
\end{align}

We estimate the two terms separately in what follows.

\noindent\textbf{Ergodic error.} We estimate the ergodic error term via the Poisson equation approach.
Consider the Poisson equation associated with the generator $Q(x)$:
\begin{equation}\label{eq:poi}
	-Q(x) \Phi(x,\cdot)(i) =b(x,i)-\bar{b}(x).
\end{equation}
By Theorem \ref{thm:poisson1},  \eqref{eq:poi} admits a solution $\Phi(x,\cdot)$, and furthermore,
\begin{equation}\label{MULTI:POISSON:EQ:042i}
	\|\Phi(x,\cdot)\|_{\infty} \le C\|b(x,\cdot)-\bar{b}(x)\|_{\infty} \le C(1+|x|).
\end{equation}
Applying It\^o's formula for jump process $\widetilde{\Lambda}_{t}^{x,i_0}$ to $\Phi(x,\cdot)$, we obtain
\begin{align}\label{eq:ghshg}
	\Phi(x,\widetilde{\Lambda}_{t}^{x,i_0}) &=\Phi(x,i_0) +\int_{0}^{t}Q(x) \Phi(x,\cdot)(\widetilde{\Lambda}_{s}^{x,i_0})\dif s\\
	\nonumber&\quad + \int_{0}^{t}\int_{[0,\infty)}\left(\Phi(x,\widetilde{\Lambda}_{s-}^{x,i_0}+h(x,\widetilde{\Lambda}_{s-}^{x,i_0},z))-\Phi(x,\widetilde{\Lambda}_{s-}^{x,i_0})\right)\widetilde {N}(\dif s,\dif z).
\end{align}
Furthermore, by \eqref{MULTI:POISSON:EQ:042i}, \eqref{eq:ghshg}, and Kunita’s first inequality (see, e.g., \cite[Theorem 4.4.23]{applebaum2009levy}), we obtain that for $M\Delta_2\geq 1$,
\begin{align}\label{eq:daiwa}
	\nonumber&\quad\E\left(\left|\int_{0}^{M\Delta_2}\left(\bar{b}(x)-b(x,\widetilde{\Lambda}_{s}^{x,i_0})\right)\dif s\right|^p\right)\\
    \nonumber&=\mathbb{E}\left(\left|\int_{0}^{M\Delta_2}Q(x) \Phi(x,\cdot)(\widetilde{\Lambda}_{s}^{x,i_0})\dif s\right|^p\right)\\
	\nonumber&\leq 2^{p-1}\mathbb{E}[|\Phi(x,\widetilde{\Lambda}_{M\Delta_2}^{x,i_0}) -\Phi(x,i_0)|^p]\\
	&\quad+2^{p-1}\mathbb{E}\left(\left|\int_{0}^{M\Delta_2}\int_{[0,\infty)}\left(\Phi(x,\widetilde{\Lambda}_{s-}^{x,i_0}+h(x,\widetilde{\Lambda}_{s-}^{x,i_0},z))-\Phi(x,\widetilde{\Lambda}_{s-}^{x,i_0})\right)\widetilde N(\dif s,\dif z) \right|^p\right)\\
	\nonumber&\leq C_p(1+|x|)^{p}+C_p\mathbb{E}\left(\int_{0}^{M\Delta_2}\int_{[0,C(1+|x|)]}\left|\Phi(x,\widetilde{\Lambda}_{s-}^{x,i_0}+h(x,\widetilde{\Lambda}_{s-}^{x,i_0},z))-\Phi(x,\widetilde{\Lambda}_{s-}^{x,i_0})\right|^2\dif z\dif s \right)^{p/2}\\
    \nonumber&\quad +C_p\mathbb{E}\left(\int_{0}^{M\Delta_2}\int_{[0,C(1+|x|)]}\left|\Phi(x,\widetilde{\Lambda}_{s-}^{x,i_0}+h(x,\widetilde{\Lambda}_{s-}^{x,i_0},z))-\Phi(x,\widetilde{\Lambda}_{s-}^{x,i_0})\right|^p\dif z\dif s \right)\\
	\nonumber&\leq C_p(1+|x|)^{p}+C_p(1+|x|)^{3p/2}(M\Delta_2)^{p/2}+C_p(1+|x|)^{p+1}M\Delta_2\\
   \nonumber &\leq C_p(1+|x|)^{3p/2}(M\Delta_2)^{p/2}.
\end{align}

\noindent\textbf{Discretization error.}  We estimate the discretization error term on each interval $[m\Delta_2, (m+1)\Delta_2]$. 
Since the holding time of
$\widetilde{\Lambda}^{x,i_0}_t$  at state $i$ is exponentially distributed
with parameter $q_i(x)$, we have its transition probability $p^x_{ii}(t)\geq \mathrm{e}^{-q_i(x)t}$, and
consequently
\begin{equation}\label{eq:ctmc-hold}
\sum_{j\neq i}p^{x}_{ij}(t)=1-p^x_{ii}(t)
\leq 1-\mathrm{e}^{-q_i(x)t}\leq q_i(x)t,
\qquad \forall\, t\geq 0.
\end{equation}
Applying \eqref{eq:ctmc-hold}
together with (H3) yields
\begin{align}\label{eq:buneng}
		\nonumber &\quad \E\int_{m\Delta_2}^{(m+1)\Delta_2}\left|b(x,\widetilde{\Lambda}_{s}^{x,i_0})- b(x, \widetilde{\Lambda}_{m\Delta_2}^{x,i_0})\right|^p\dif s \\
	\nonumber	 &= \E\int_{m\Delta_2}^{(m+1)\Delta_2}\left|b(x,{\widetilde{\Lambda}}_{s}^{x,i_0})- b(x, \widetilde{\Lambda}_{m\Delta_2}^{x,i_0})\right|^p \mathbbm{1}_{\{\widetilde{\Lambda}_{s}^{x,i_0}\neq\widetilde{\Lambda}_{m\Delta_2}^{x,i_0} \}}\dif s  \\
	 	&=\E\sum_{i\in\mathbb{S}} \sum_{j\neq i}\int_{m\Delta_2}^{(m+1)\Delta_2}\left|b(x,j)- b(x, i)\right|^p \mathbbm{1}_{\{\widetilde{\Lambda}_{s}^{x,i_0}=j\}}\mathbbm{1}_{\{\widetilde{\Lambda}_{m\Delta_2}^{x,i_0} =i\}}\dif s \\
	\nonumber	&\leq C_p(1+|x|)^{p}\E\sum_{i\in\mathbb{S}} \sum_{j\neq i}\int_{m\Delta_2}^{(m+1)\Delta_2} \mathbbm{1}_{\{\widetilde{\Lambda}_{m\Delta_2}^{x,i_0} =i\}}\E\left[\mathbbm{1}_{\{\widetilde{\Lambda}_{s}^{x,i_0}=j\}}\mid\widetilde{\Lambda}_{m\Delta_2}^{x,i_0} =i \right]\dif s \\
\nonumber	 	&\leq C_p(1+|x|)^{p}\E\sum_{i\in\mathbb{S}} \int_{m\Delta_2}^{(m+1)\Delta_2} \mathbbm{1}_{\{\widetilde{\Lambda}_{m\Delta_2}^{x,i_0} =i\}}q_i(x)(s-m\Delta_2)\dif s \\
\nonumber	 	&\leq C_p(1+|x|)^{p+1} \int_{0}^{\Delta_2} s  \d s \leq C_p(1+|x|)^{p+1}\Delta_2^2.
	\end{align}
Inserting \eqref{eq:buneng} and \eqref{eq:daiwa} into \eqref{eq:buli} yields
\[
\E|\bar{b}(x)-\tilde{b}_{\Delta_2,M}(x)|^p\leq C_p(1+|x|)^{3p/2}\left(\Delta_2+\frac{1}{(M\Delta_2)^{p/2}}\right).
\]
The proof is complete.
\end{proof}

\begin{remark}
Since the switching process is a jump process, the final convergence rate $\Delta^2_2$ does not depend on the power $p\geq 2$ in the studying the \textbf{Discretization error} in the proof of Lemma \ref{lem:guji}.  Consequently, the final error in Lemma \ref{lem:guji} remains $\Delta_2$.
\end{remark}

 Due to computational convenience, we present the continuous-time interpolated version, and with a minor abuse of notation, continue to denote it by $\hat{Z}_t$:
	\begin{equation}\label{EQ:discrete-time multiscale:01}
		\begin{split}
			\dif \hat{Z}_t &=\tilde{b}_{\Delta_2,M}(\hat{Z}_{t(\Delta_1)})\dif t+\sigma (\hat{Z}_{t(\Delta_1)})\dif W_t\\
			&= \frac{1}{M}\sum_{m=0}^{M-1}b(\hat{Z}_{t(\Delta_1)},\Lambda_{m\Delta_2}^{\hat{Z}_{t(\Delta_1)},i_0})\dif t+\sigma (\hat{Z}_{t(\Delta_1)})\dif W_t,\quad \hat{Z}_0= x_0,
		\end{split}
	\end{equation}
To proceed, we have the following moment estimate for $\hat{Z}_t$ defined by \eqref{EQ:discrete-time multiscale:01}.
\begin{lemma}\label{lem:juguj}
	Suppose that (H1)-(H3) hold. Then, for any $x_0\in \mathbb{R}^n$, $i_0\in \mathbb{S}$, $T>0,$ $p\geq2$, and $M\geq 1,$ there exists a constant $C_{x_0,T,p}>0$ such that
	\begin{equation*}
		\mathbb{E}\left(\sup_{0\leq t\leq T}|\hat{Z}_t|^{p}\right)\leq C_{x_0,T,p}.
	\end{equation*}
\end{lemma}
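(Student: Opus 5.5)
The plan is the standard moment bound for an Euler--Maruyama scheme. The one new ingredient is that the estimated drift $\tilde{b}_{\Delta_2,M}$ inherits the linear growth of $b$ deterministically, i.e.\ pathwise for every realization of the micro chain. This linear growth is where the bound comes from. Indeed, by (H1) we have $|b(x,i)-b(0,1)|\le C(|x|+1)$ for all $i\in\mathbb{S}$, hence $|b(x,i)|\le C(1+|x|)$ uniformly in $i$. Since $\tilde{b}_{\Delta_2,M}(x)$ is an arithmetic mean of values $b(x,\cdot)$, it follows that
\[
|\tilde{b}_{\Delta_2,M}(x)|\le \frac1M\sum_{m=0}^{M-1}\bigl|b(x,\Lambda^{x,i_0}_{m\Delta_2})\bigr|\le C(1+|x|)
\]
almost surely. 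Here $C$ is independent of $M$, $\Delta_2$, $x$ and the randomness. Likewise, (H1) gives $\|\sigma(x)\|\le C(1+|x|)$. I would also note that the micro chains used at step $n$ are generated from fresh uniforms, independent of $W$. Consequently $\hat{Z}_t$ is adapted to an enlarged filtration $\mathscr{G}_t$ (generated by $W$ and all micro randomness up to the current macro step), $W$ remains a $\mathscr{G}_t$-Brownian motion, and the stochastic integral and BDG inequality are legitimate.

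Next, I would use the interpolated equation \eqref{EQ:discrete-time multiscale:01}. For $t\le T$, I write
\[
\hat{Z}_t=x_0+\int_0^t\tilde{b}_{\Delta_2,M}(\hat{Z}_{s(\Delta_1)})\,\dif s+\int_0^t\sigma(\hat{Z}_{s(\Delta_1)})\,\dif W_s .
\]
By the $C_r$-inequality, H\"older's inequality, and the BDG inequality for the martingale part, I get
\[
\E\Bigl(\sup_{0\le u\le t}|\hat{Z}_u|^p\Bigr)\le C_p|x_0|^p+C_{p,T}\int_0^t\E\bigl(1+|\hat{Z}_{s(\Delta_1)}|^p\bigr)\,\dif s .
\]
I then bound $|\hat{Z}_{s(\Delta_1)}|^p\le \sup_{0\le u\le s}|\hat{Z}_u|^p$. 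Setting $f(t)=\E(\sup_{u\le t}|\hat{Z}_u|^p)$, this yields $f(t)\le C_{x_0,T,p}+C_{p,T}\int_0^t f(s)\,\dif s$, and Gr\"onwall's inequality gives the claim with a constant independent of $M$, $\Delta_1$, $\Delta_2$.

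The main obstacle is justifying Gr\"onwall, which requires $f(t)<\infty$ a priori. I would handle this by a stopping-time argument. Let $\tau_R=\inf\{t:|\hat{Z}_t|\ge R\}$, run the same estimates for $\hat{Z}_{t\wedge\tau_R}$ (where every quantity is bounded by $R$, so finiteness is automatic), and let $R\to\infty$ via Fatou's lemma.

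Alternatively, finiteness follows by induction over macro steps. Given $\hat{Z}_{n\Delta_1}$ with finite $p$-th moment, the pathwise linear growth bound gives
\[
\E|\hat{Z}_{(n+1)\Delta_1}|^p\le C\,\E\bigl(1+|\hat{Z}_{n\Delta_1}|^p\bigr)<\infty,
\]
using independence of $\Delta W_n$ from $\hat{Z}_{n\Delta_1}$ and from the micro chain at step $n$.

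Either route closes the argument, and the $M$-uniformity comes solely from the pathwise bound on $\tilde{b}_{\Delta_2,M}$.
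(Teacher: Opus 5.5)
Your proof is correct. It rests on the same key fact as the paper's proof: the pathwise bound $|\tilde{b}_{\Delta_2,M}(x)|\le C(1+|x|)$, uniform in $M$, $\Delta_2$ and the micro randomness, inherited from $\sup_i|b(x,i)|\le C(1+|x|)$.

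The route differs slightly. The paper applies It\^o's formula to $|\hat{Z}_t|^p$ (for $p\ge 4$) and absorbs the BDG bound of the martingale term as $\frac12\E(\sup_{s\le t}|\hat{Z}_s|^p)$. It then controls the cross term $|\hat{Z}_s|^{p-2}\langle\hat{Z}_s,\tilde{b}_{\Delta_2,M}(\hat{Z}_{s(\Delta_1)})\rangle$ by Young's and Jensen's inequalities. You instead estimate the integral equation directly with the $C_r$, H\"older and BDG inequalities. The paper bounds that cross term crudely and exploits no one-sided structure, so the It\^o route gains nothing under (H1). Your version is shorter and covers every $p\ge 2$ at once.

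Your argument is also more complete on two points the paper passes over:
\begin{itemize}
\item the a priori finiteness of $\E(\sup_{t\le T}|\hat{Z}_t|^p)$, needed for Gr\"onwall (and, in the paper's case, for the absorption step), which your localization by $\tau_R$ plus Fatou settles;
\item the enlarged filtration, which makes $\int\sigma(\hat{Z}_{s(\Delta_1)})\,\dif W_s$ and BDG legitimate despite the independent micro randomness.
\end{itemize}

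If you use the induction route instead of localization, note that it gives finiteness only at grid points. You also need the within-step supremum. On $[n\Delta_1,(n+1)\Delta_1]$ you have $\hat{Z}_t=\hat{Z}_{n\Delta_1}+\tilde{b}_{\Delta_2,M}(\hat{Z}_{n\Delta_1})(t-n\Delta_1)+\sigma(\hat{Z}_{n\Delta_1})(W_t-W_{n\Delta_1})$, so this follows at once from Doob's inequality.
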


\begin{proof}
	Using It\^o's formula for $|\hat{Z}_t|^p$ for any $p\geq 4$, one has for any  $0\leq t\leq T,$
	\begin{align*}
		|\hat{Z}_t|^{p}=&  \left|x_{0}\right|^{p}+p \int_{0}^{t}|\hat{Z}_s|^{p-2}\langle  \hat{Z}_s, \sigma(\hat{Z}_{s(\Delta_1)}) \mathrm{d} W_s\rangle\\
        &+\frac{p}{2} \int_{0}^{t}|\hat{Z}_s|^{p-2}\bigg(2\langle \hat{Z}_s, \tilde{b}_{\Delta_2,M}(\hat{Z}_{s(\Delta_1)})\rangle+\|\sigma(\hat{Z}_{s(\Delta_1)})\|^{2}\\
	&\qquad\qquad\qquad\qquad\qquad\qquad+(p-2)|\hat{Z}_s|^{-2}|\hat{Z}_s\cdot \sigma(\hat{Z}_{s(\Delta_1)})|^2\bigg) \mathrm{d} s.
	\end{align*}
	By the Burkholder-Davis-Gundy inequality, Young's inequality, and the linear growth of $\sigma$, one has
	\begin{align*}
		\mathbb{E}\left(\sup_{0\leq s\leq t}|\hat{Z}_s|^{p}]\right)&\leq |x_0|^{p}+\frac{1}{2}\mathbb{E}\left(\sup_{0\leq s\leq t}|\hat{Z}_s|^{p}\right) +C_{T,p}\int_{0}^{t}\mathbb{E}|\hat{Z}_s|^{p}\dif s+C_{T,p}\int_{0}^{t}\mathbb{E}|\hat{Z}_{s(\Delta_1)}|^{p}\dif s\\
		&\quad+p\mathbb{E}\left(\sup_{0\leq s\leq t}\int_{0}^{s}|\hat{Z}_r|^{p-2}\langle \hat{Z}_r, \tilde{b}_{\Delta_2,M}(\hat{Z}_{r(\Delta_1)})\rangle \mathrm{d} r\right).
	\end{align*}
	By Young's inequality, Jensen's inequality, the definition of $\tilde{b}_{\Delta_2,M}$,  and $\|b(x,\cdot)\|_{\infty}\leq C(1+|x|)$, we obtain
	\begin{align*}
		&\quad \mathbb{E}\left(\sup_{0\leq s\leq t}\int_{0}^{s}|\hat{Z}_r|^{p-2}\langle \hat{Z}_r, \tilde{b}_{\Delta_2,M}(\hat{Z}_{r(\Delta_1)})\rangle \mathrm{d} r\right) \\
		&\leq C_p\mathbb{E}\int_{0}^{t}|\hat{Z}_s|^{p}\mathrm{d} s+C_p\mathbb{E}\int_{0}^{t} |\tilde{b}_{\Delta_2,M}(\hat{Z}_{s(\Delta_1)})|^p \mathrm{d} s\\
		&=C_p\mathbb{E}\int_{0}^{t}|\hat{Z}_s|^{p}\mathrm{d} s+\frac{C_p}{(M\Delta_2)^p}\mathbb{E}\int_{0}^{t} \left|\int_0^{M\Delta_2}{b}(\hat{Z}_{s(\Delta_1)},\Lambda_{u(\Delta_2)}^{x_0,i_0})\d u\right|^p \mathrm{d} s\\
		&\leq C_p\mathbb{E}\int_{0}^{t}|\hat{Z}_s|^{p}\mathrm{d} s+\frac{C_p}{M\Delta_2}\mathbb{E}\int_{0}^{t} \int_0^{M\Delta_2}|{b}(\hat{Z}_{s(\Delta_1)},\Lambda_{u(\Delta_2)}^{x_0,i_0})|^p\d u \mathrm{d} s\\
		&\leq C_{T,p}+C_{p}\mathbb{E}\int_{0}^{t}|\hat{Z}_s|^{p}\dif s+ C_{p}\mathbb{E}\int_{0}^{t}|\hat{Z}_{s(\Delta_1)}|^{p}\dif s\\
		&\leq C_{T,p}+C_{p}\int_{0}^{t}\mathbb{E}\left(\sup_{0\leq r\leq s}|\hat{Z}_r|^{p}\right)\dif s.
	\end{align*}
	Combining the above results and using Gr\"onwall's inequality, we can  derive the result.
    \end{proof}

Then we turn to prove the strong convergence of the standard EM scheme $Z_t$ and the multiscale numerical solution $\hat{Z}_t$.
\begin{lemma}\label{lem:third3}
	Suppose that (H1)-(H3) hold. Then, for any $T>0$, $i_0\in\mathbb{S}$, $x_0\in\mathbb{R}^d$, and $p\geq 2$, there exists a constant $C_{x_0,T,p}$ such that for any $\Delta_1\in [0,1]$,
	\begin{align*}
		\mathbb{E}\left(\sup_{0\leq t\leq T}|Z_t-\hat{Z}_t|^p\right) \leq C_{x_0,T,p}\left(\Delta_2+\frac{1}{(M\Delta_2)^{p/2}}\right).
	\end{align*}
\end{lemma}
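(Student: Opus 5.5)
We write the difference of the two interpolated schemes and split the drift error into a Lipschitz part and an estimator part. Since $Z$ and $\hat Z$ are driven by the same Brownian motion and share $Z_0=\hat Z_0=x_0$,
\begin{align*}
Z_t-\hat{Z}_t&=\int_0^t\left(\bar b(Z_{s(\Delta_1)})-\bar b(\hat Z_{s(\Delta_1)})\right)\dif s+\int_0^t\left(\bar b(\hat Z_{s(\Delta_1)})-\tilde b_{\Delta_2,M}(\hat Z_{s(\Delta_1)})\right)\dif s\\
&\quad+\int_0^t\left(\sigma(Z_{s(\Delta_1)})-\sigma(\hat Z_{s(\Delta_1)})\right)\dif W_s .
\end{align*}
We bound the first integral using the Lipschitz continuity of $\bar b$ from Remark \ref{MULTI:RMK:0416:01}. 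The stochastic integral is handled by the Burkholder--Davis--Gundy inequality together with the Lipschitz property of $\sigma$ in (H1). After H\"older's inequality, this gives for $t\le T$
\[
\E\Big(\sup_{0\le s\le t}|Z_s-\hat Z_s|^p\Big)\le C_{p,T}\int_0^t\E\Big(\sup_{0\le r\le s}|Z_r-\hat Z_r|^p\Big)\dif s+C_{p,T}\int_0^T\E\left|\bar b(\hat Z_{s(\Delta_1)})-\tilde b_{\Delta_2,M}(\hat Z_{s(\Delta_1)})\right|^p\dif s .
\]
Here we used that $s(\Delta_1)\le s$, so $|Z_{s(\Delta_1)}-\hat Z_{s(\Delta_1)}|\le\sup_{r\le s}|Z_r-\hat Z_r|$.

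The key step is to control the estimator term by applying Lemma \ref{lem:guji} conditionally. In Algorithm \ref{alg:discrete}, at each macro step $n$ the micro chain is generated with fresh uniforms, independent of $\mathscr F_{n\Delta_1}$ and hence of $\hat Z_{n\Delta_1}$. It is started at $i_0$ with transition matrix $\e^{\Delta_2Q(x)}$ frozen at $x=\hat Z_{n\Delta_1}$. We formalize this on a product space: the micro randomness is independent of $W$ and of earlier micro randomness. Conditioning on $\hat Z_{n\Delta_1}=x$ and applying Lemma \ref{lem:guji} at the fixed point $x$ then gives
\[
\E\left|\bar b(\hat Z_{n\Delta_1})-\tilde b_{\Delta_2,M}(\hat Z_{n\Delta_1})\right|^p\le C_p\,\E(1+|\hat Z_{n\Delta_1}|)^{3p/2}\left(\Delta_2+(M\Delta_2)^{-p/2}\right).
\]
The constant in Lemma \ref{lem:guji} is uniform in $x$, so this conditioning is legitimate. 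Lemma \ref{lem:juguj}, applied with exponent $3p/2\ge 2$, bounds $\E\sup_{t\le T}|\hat Z_t|^{3p/2}$ by $C_{x_0,T,p}$. The second term above is therefore at most $C_{x_0,T,p}(\Delta_2+(M\Delta_2)^{-p/2})$, and Gr\"onwall's inequality yields the claim.

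The main obstacle is to justify this conditioning rigorously, since $\tilde b_{\Delta_2,M}$ is a random function and not a deterministic drift. We would state explicitly the independence structure between the micro chains and $\sigma(W)$, and write the expectation as $\E[g(\hat Z_{n\Delta_1})]$ with $g(x):=\E|\bar b(x)-\tilde b_{\Delta_2,M}(x)|^p$, using the freezing (independence) lemma. A secondary issue is measurability of $\hat Z$ with respect to the enlarged filtration. We take the filtration generated by $W$ together with the micro randomness up to step $n$. The micro randomness at step $n$ is independent of $W$, so $W$ remains a Brownian motion under this filtration and BDG still applies.
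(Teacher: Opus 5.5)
Your proposal is correct and takes essentially the same route as the paper. The paper uses the same split into the Lipschitz part of $\bar b$, the estimator error, and the diffusion difference, followed by BDG plus H\"older, then Lemma \ref{lem:guji} applied conditionally at $x=\hat Z_{s(\Delta_1)}$ together with the moment bound of Lemma \ref{lem:juguj}, and finally Gr\"onwall. Your explicit treatment of the independence of the fresh micro-chain randomness and of the enlarged filtration simply makes rigorous the tower-property step that the paper writes down informally.
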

\begin{proof}
	By definition of $Z_t$  and $\hat{Z}_t$, one has
	\begin{align*}
		Z_t-\hat{Z}_t = \int_{0}^{t}\left(\bar{b}(Z_{s(\Delta_1)})-\tilde{b}_{\Delta_{2},M}(\hat{Z}_{s(\Delta_1)})\right)  \dif s+\int_{0}^{t}\left(\sigma(Z_{s(\Delta_1)})-\sigma(\hat{Z}_{s(\Delta_1)})\right)  \dif W_s.
	\end{align*}
	It follows from the Burkholder-Davis-Gundy inequality, H\"older's inequality, and the $C_r$-inequality that for any $t\in [0,T]$, we have
	\begin{align*}
		\mathbb{E}\left(\sup_{0\leq s\leq t}|Z_s-\hat{Z}_s|^p\right) &\leq C_{p,T} \int_{0}^{t}\E|\bar{b}(Z_{s(\Delta_1)})-\tilde{b}_{\Delta_{2},M}(\hat{Z}_{s(\Delta_1)})|^p\dif s\\
		&\quad + C_p\E\left(\int_{0}^{t}\|\sigma(Z_{s(\Delta_1)})-\sigma(\hat{Z}_{s(\Delta_1)})\|^2\dif s\right)^{p/2}\\
		&\leq
		C_{p,T} \int_{0}^{t}\E|\bar{b}(Z_{s(\Delta_1)})-\bar{b}(\hat{Z}_{s(\Delta_1)})|^p\dif s\\
		&\quad+C_{p,T} \int_{0}^{t}\E|\bar{b}(\hat{Z}_{s(\Delta_1)})-\tilde{b}_{\Delta_{2},M}(\hat{Z}_{s(\Delta_1)})|^p\dif s\\
		&\quad + C_{p,T}\int_{0}^{t}\E\|\sigma(Z_{s(\Delta_1)})-\sigma(\hat{Z}_{s(\Delta_1)})\|^p\dif s=:\sum_{i=1}^{3}\mathcal{S}_i.
	\end{align*}
	For $\mathcal{S}_1$ and $\mathcal{S}_3$, it follows from the Lipschitz continuity of  $\bar{b}$ and $\sigma$ that
	\begin{align*}
		\mathcal{S}_1+\mathcal{S}_3\leq C_{p,T}\int_{0}^{t} \E|Z_{s(\Delta_1)}-\hat{Z}_{s(\Delta_1)}|^p\dif s.
	\end{align*}
	For $\mathcal{S}_2,$ by Lemmas \ref{lem:guji} and \ref{lem:juguj},
	\begin{align*}
		&\quad \E|\bar{b}(\hat{Z}_{s(\Delta_1)})-\tilde{b}_{\Delta_2,M}(\hat{Z}_{s(\Delta_1)})|^p \\
        &= \E\left|\frac{1}{M\Delta_2}\int_{0}^{M\Delta_2}\left(\bar{b}(\hat{Z}_{s(\Delta_1)})-b(\hat{Z}_{s(\Delta_1)},\Lambda_{u(\Delta_2)}^{\hat{Z}_{s(\Delta_1)},i_0})\right)\dif u\right|^p \\
		& =\E\left[\E\left(\left|\frac{1}{M\Delta_2}\int_{0}^{M\Delta_2}\left(\bar{b}(\hat{Z}_{s(\Delta_1)})-b(\hat{Z}_{s(\Delta_1)},\Lambda_{u(\Delta_2)}^{\hat{Z}_{s(\Delta_1)},i_0})\right)\dif u\right|^p\Bigg|\hat{Z}_{s(\Delta_1)}\right)\right]\\
		&= \E\left[\E\left(\left|\bar{b}(x)-\tilde{b}_{\Delta2,M}(x)\right|^p\right)\bigg|_{x=\hat{Z}_{s(\Delta_1)}}\right]\\
        &\leq C_{p}\E\left[(1+|\hat{Z}_{s(\Delta_1)}|^{3p/2})\left(\Delta_2+\frac{1}{(M\Delta_2)^{p/2}}\right)\right]\\
        &\leq C_{x_0,T,p}\left(\Delta_2+\frac{1}{(M\Delta_2)^{p/2}}\right).
	\end{align*}
	Combining the above estimates and applying Gr\"onwall's inequality, we have
	\begin{align*}
		\mathbb{E}\left(\sup_{0\leq t\leq T}|Z_t-\hat{Z}_t|^p\right)\leq C_{x_0,T,p}\left(\Delta_2+\frac{1}{(M\Delta_2)^{p/2}}\right).
	\end{align*}
	The proof is complete.
\end{proof}

Based on Lemmas \ref{lem:first1}, \ref{lem:second2} and \ref{lem:third3}, we can easily obtain the strong convergence between the slow component $X^{\varepsilon}_t$ and the multiscale numerical solution $\hat{Z}_t$.

\begin{theorem}\label{Thm:strongr}
	Suppose that (H1)-(H3) hold. Then, for any $T>0$, $x_0\in\mathbb{R}^d$, $i_0\in\mathbb{S}$, and $p\geq 2$, there exists a constant $C_{x_0,T,p}$ such that for any $\varepsilon\in(0,1]$ and $\Delta_1\in[0,1]$,
	\begin{align*}
		\mathbb{E}\left(\sup_{0\leq t\leq T}|X^{\varepsilon}_t-\hat{Z}_t|^p\right)\leq C_{x_0,T,p} \left(\varepsilon^{p/2}+\Delta_1^{p/2}+\Delta_2+\frac{1}{(M\Delta_2)^{p/2}}\right).
	\end{align*}
\end{theorem}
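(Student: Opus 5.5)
The plan is a triangle-inequality decomposition through the two intermediate processes already analyzed, namely the averaged solution $\bar{X}_t$ and the EM scheme $Z_t$ of the averaged equation. Write
\[
X^{\varepsilon}_t-\hat{Z}_t=(X^{\varepsilon}_t-\bar{X}_t)+(\bar{X}_t-Z_t)+(Z_t-\hat{Z}_t).
\]
Take the supremum over $t\in[0,T]$ and raise to the power $p$. The elementary $C_r$-inequality $|a+b+c|^p\le 3^{p-1}(|a|^p+|b|^p+|c|^p)$ then gives
\[
\mathbb{E}\Big(\sup_{0\le t\le T}|X^{\varepsilon}_t-\hat{Z}_t|^p\Big)\le 3^{p-1}\big(I_1+I_2+I_3\big),
\]
where $I_1,I_2,I_3$ are the expected suprema of the three differences. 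All processes are driven by the same Brownian motion $W$, so the sum is pathwise meaningful.

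Each term is then bounded by an earlier result, in order. By Lemma \ref{lem:first1}, $I_1\le C_{p,T}(1+|x_0|^{k_p})\varepsilon^{p/2}$, and the $x_0$-dependence is absorbed into $C_{x_0,T,p}$. By Lemma \ref{lem:second2}, $I_2\le C_{x_0,T,p}\Delta_1^{p/2}$. By Lemma \ref{lem:third3}, $I_3\le C_{x_0,T,p}\big(\Delta_2+(M\Delta_2)^{-p/2}\big)$, uniformly in $\Delta_1$. Summing the three bounds and renaming the constant yields the claimed estimate.

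There is no genuine obstacle here. The only point to check is that the constants are uniform in $\varepsilon\in(0,1]$, $\Delta_1$, $\Delta_2$ and $M$, which the three lemmas provide. One should also note that Lemma \ref{lem:third3} was proved for the continuous interpolations with the same $\Delta_1$ grid as in Lemma \ref{lem:second2}, so the middle process $Z_t$ is the same object in both bounds. The degenerate case $\Delta_1=0$ allowed in the statement should be read as the limiting situation $Z=\bar{X}$, in which $I_2=0$.
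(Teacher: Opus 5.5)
Your proposal is correct and matches the paper's argument: the paper obtains this theorem by splitting $X^{\varepsilon}_t-\hat{Z}_t$ through $\bar{X}_t$ and $Z_t$ with the $C_r$-inequality and invoking Lemmas \ref{lem:first1}, \ref{lem:second2} and \ref{lem:third3} (this is written out explicitly only for the analogous Algorithm 3 theorem). One small correction concerns $\Delta_1=0$: since $t(\Delta_1)$, and hence $\hat{Z}_t$ itself, is undefined there, the interval $[0,1]$ in the statement is simply a typo for $(0,1]$ (the range in Lemma \ref{lem:second2}), not a limiting case to be interpreted.
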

\begin{remark}\label{rem:error}
    From Theorem \ref{Thm:strongr}, we observe that discrete errors $1/(M\Delta_2)$ appear in the error terms between \(X^{\varepsilon}_t\) and \(\hat{Z}_t\). This arises because we approximate \(\mu^x(f)\) in \eqref{EQ:FROZEN:ERGO} using a DTMC $\{\Lambda_{m\Delta_2}^{x}\}_{0\leq m\leq M}$, rather than the CTMC $\Lambda_t^{x}$ itself.
\end{remark}

\subsection{Numerical experiments of Algorithm 2}
We carry out the same MSE computation for \textbf{Example (b)}  as before ($N=100$).
Figure \ref{fig:dtmc_multiscale_efficiency_N} demonstrates that the MSE decays at the expected
algebraic rate as \(h=\Delta_1=\Delta_2=(M\Delta_2)^{-1}=0.1,\, 0.05,\, 0.02,\, 0.01,\, 0.005,\, 0.002\) tends to zero, which confirms
the theoretical convergence result established for Algorithm \ref{alg:discrete} in Theorem \ref{Thm:strongr}.
\begin{figure}[htbp]
	\centering
	\includegraphics[width=0.6\linewidth]{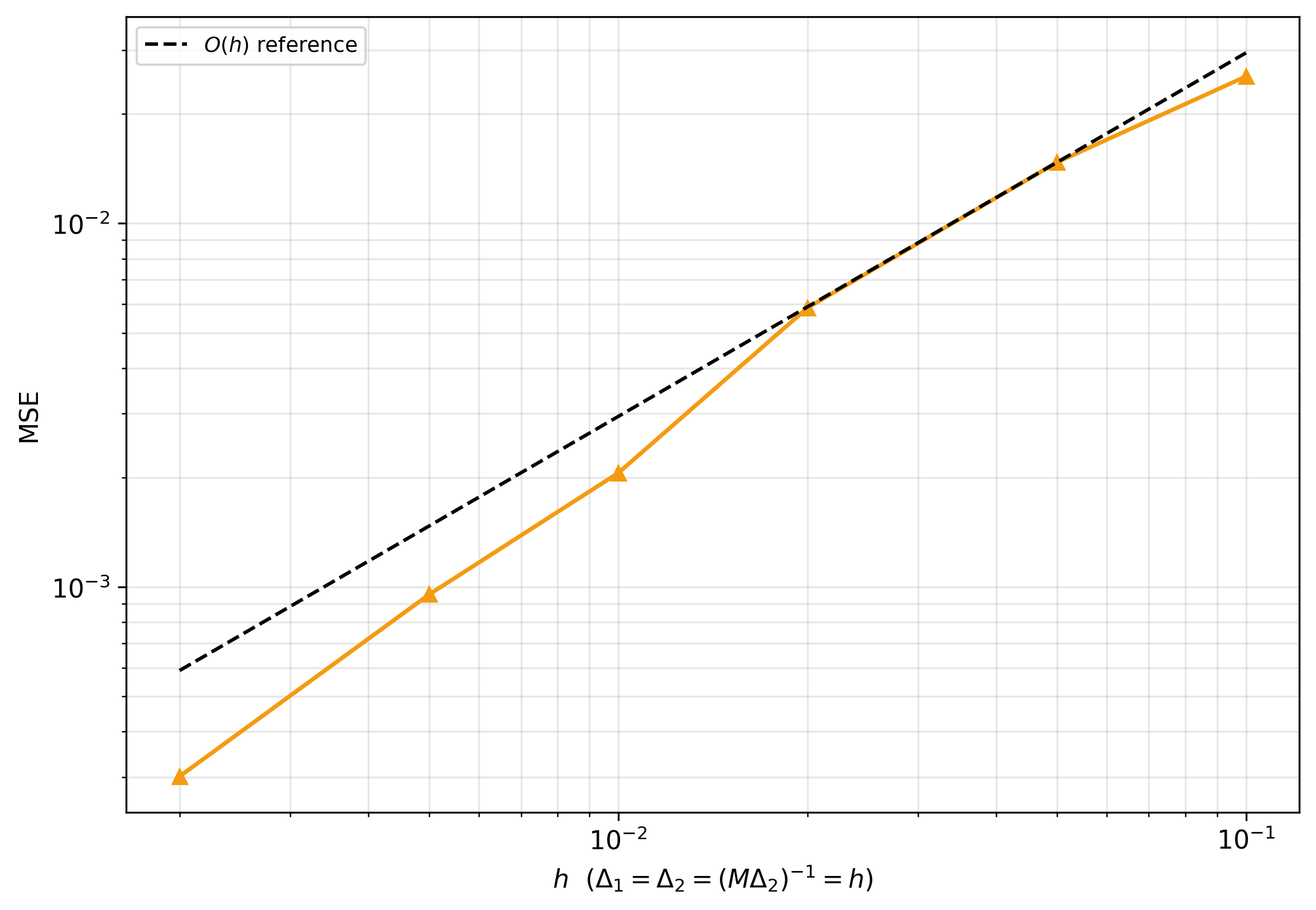}
	\caption{Log-log plot of the MSE between \(\bar{X}_{0.5}^{ref}\) and \(\hat{Z}_{0.5}\) against \(\Delta_1=\Delta_2=(M\Delta_2)^{-1}\).}
	\label{fig:dtmc_multiscale_efficiency_N}
\end{figure}

We set the target MSE at \(t^{*}=0.5\) to be \(10^{-4}\) and report the computational
efficiency for this example. As shown in Table \ref{tab:efficiency:01}, Algorithm \ref{alg:discrete}
is slower than Algorithm \ref{alg:hmm-euler}  when the state space is small, but its runtime grows much more
slowly as \(N\) increases, and it becomes faster than Algorithm \ref{alg:hmm-euler}  from \(N=200\) onwards,
with about five times speedup at \(N=1000\). This indicates that Algorithm \ref{alg:discrete} avoids the costly
computation of the invariant measure, which is the main bottleneck of Algorithm \ref{alg:hmm-euler}, and is
therefore more suitable for the fast process with a large number of states.

\begin{table}[htbp]
  \centering
  \caption{Runtime (s) comparison I with different $N$}
  \label{tab:efficiency:01}
  \begin{tabular}{rccc}
  \toprule
    $N$ & Algorithm 1 & Algorithm 2 & Speedup (2 vs 1)  \\
    \midrule
   10   & 0.0011 & 0.0039 & 0.28$\times$ \\
   50   & 0.0194 & 0.0289 & 0.67$\times$ \\
   100  & 0.0394 & 0.0517 & 0.76$\times$ \\
   200  & 0.1439 & 0.1480 & 0.97$\times$ \\
   500  & 1.2223 & 0.4343 & 2.81$\times$ \\
   1000 & 5.4191 & 1.0719 & 5.06$\times$ \\    
    \bottomrule
     \end{tabular}
\end{table}

\begin{remark}
Algorithm \ref{alg:discrete} does not solve the invariant measure \(\mu^x\) of the fast process.
At each macro step, it estimates the averaged drift by the sample mean of the drift
along a DTMC with \(M\) steps and step size \(\Delta_2\), where
\(\Delta_1=\Delta_2=(M\Delta_2)^{-1}=h\). Its per-iteration cost depends only on the
current state of the chain, and no \(N\)-dimensional linear system needs to be solved.
Consequently, for a fixed accuracy, the total cost of Algorithm \ref{alg:discrete} grows much more slowly
with \(N\) than that of Algorithm \ref{alg:hmm-euler}, although a sufficiently small \(\Delta_2\) (equivalently
a sufficiently large \(M\)) is needed to control the discretization and sampling errors
of the drift estimate.
\end{remark}


\section{Algorithm 3}\label{SEC:PROPOSED}
Note that Algorithms \ref{alg:hmm-euler} and \ref{alg:discrete} are both restricted to a finite state space in the switching process. To overcome this limitation, we introduce a third algorithm. Meanwhile, in order to eliminate the discretization error mentioned in Remark \ref{rem:error}, we adopt the original CTMC $\Lambda_{t}^x$ to approximate \(\mu^x(f)\), i.e.,  for any $f$,
\begin{equation*}
	\frac{1}{R}\int_{0}^{R} f(\Lambda_{s}^x)\dif s \to	\mu^x(f),\quad R\to\infty. 
\end{equation*}
Therefore, the key problem reduces to the numerical simulation of the CTMC $\Lambda_{t}^x$, for which we employ the Gillespie algorithm \cite{Gillespie1977JPC}. 
Now, let $(\alpha_t)_{t\ge 0}$ be a CTMC with generator $Q$. We construct its exact trajectory, which jointly determines the holding time and the jump target using two independent uniform random variables. Let $\alpha_0 = i_0$ be the deterministic initial state. The process is constructed recursively at jump times $T_0 < T_1 < T_2 < \cdots$:
\begin{algorithm}[H]
\caption*{\textbf{Gillespie algorithm}}
\label{alg:gillespie}
\begin{algorithmic}[1]
\Require generator $Q=(q_{ij})$,  initial state $\alpha_0=i_0$
\State Initialize $m\gets 0$, $T_0\gets 0$, $\alpha_{T_0}\gets i_0$
\For{$m = 0,1,2,\dots$}
    \State Let $k\gets\alpha_{T_m}$ be the current state
    \State Compute the total exit rate $q_k=-q_{kk}=\sum_{j\neq k}q_{kj}$
    \If{$q_k=0$ \textbf{(absorbing state)}}
        \State Set $T_{m+1}\gets\infty$, $\alpha_t\gets k$ for all $t\ge T_m$
        \State \textbf{terminate} the construction
    \Else
        \State Draw independent $U_{m},V_{m}\sim\mathrm{Uniform}(0,1)$
        \State  $\tau_{m+1}\gets-q_k^{-1}\log U_{m}$,
               and $T_{m+1}\gets T_m+\tau_{m+1}$
        \State Select the achieving state
        \[
        \ell^* = \min\left\{ l \in \{1,\dots,N\} : \sum_{j=1,j\neq k}^{l} q_{kj} \ge V_{m}q_k \right\}
        \]
        \State Update the state: $\alpha_{T_{m+1}}\gets\ell^*$
        \State For $t\in[T_m,T_{m+1})$, keep $\alpha_t\equiv\alpha_{T_m}=k$
    \EndIf
\EndFor
\end{algorithmic}
\end{algorithm}
We now present the proposed Algorithm \ref{alg:continuous}.

\begin{algorithm}[htbp]
\caption{}
\label{alg:continuous}
\begin{algorithmic}[1]
\Require initial value $(x_0,i_0)$, macro step $\Delta_1$, averaging window $R\ge1$, terminal time $T$, generator $Q(\cdot)$
\State Initialize $\hat{Z}_0\gets x_0$
\For{$n=0,1,2,\dots,\lfloor T/\Delta_1\rfloor-1$} \Comment{\textbf{Macro solver}}
    \State Freeze the slow variable $x\gets\hat{Z}_{n\Delta_1}$
    \State Initialize $T_{n,0}\gets0$, $\Lambda_{T_{n,0}}^{x,i_0}\gets i_0$, 
    \State Generate jump times $\{T_{n,m}\}_{m\geq 0}$ and simulate the frozen CTMC $(\Lambda_t^{x,i_0})_{t\ge0}$ on $[0,R]$ by the Gillespie algorithm
    \State $\eta_{n,R}\gets\inf\{k>0:T_{n,k}\ge R\}$
    \State Initialize $\tilde{b}_R(x)\gets0$
    \For{$m=0,1,\dots,\eta_{n,R}-1$}\Comment{\textbf{Micro solver} }
        \State $\tilde{b}_R(x)\gets\tilde{b}_R(x)+(T_{n,m+1}\wedge R-T_{n,m})\,b\!\left(x,\Lambda_{T_{n,m}}^{x,i_0}\right)$
    \EndFor
    \State $\tilde{b}_R(x)\gets\dfrac{1}{R}\,\tilde{b}_R(x)$, 
    \State Draw Brownian increment $\Delta W_n=W_{(n+1)\Delta_1}-W_{n\Delta_1}$
    \State $\hat{Z}_{(n+1)\Delta_1}\gets\hat{Z}_{n\Delta_1}
        +\bar{b}_R(\hat{Z}_{n\Delta_1})\Delta_1
        +\sigma(\hat{Z}_{n\Delta_1})\Delta W_n$ \Comment{\textbf{EM update}}
\EndFor
 \State \Return $\{\hat{Z}_{n\Delta_1}\},~0\leq n\leq \lfloor T/\Delta_1\rfloor$
\end{algorithmic}
\end{algorithm}
\begin{remark}
    If \(\mathbb{S}=\{1,2,\cdots\}\), then under the uniform boundedness condition
\(q_k \leq M, \; \forall k\in\mathbb{S},\)
which follows from  (H3), we can construct the CTMC using an argument similar to Gillespie’s method. This makes the method suitable for systems with a large or even infinite number of switching states, where direct matrix computation is infeasible.
    In what follows, we show that 
    \begin{align*}
    	\int_{0}^{R} b\left(\hat{Z}_{n\Delta_1}, \Lambda_{s}^{\hat{Z}_{n\Delta_1},i_0}\right) \dif s
    	= \sum_{m=0}^{\eta_{n,R}-1} \left(T_{n,m+1}\wedge R - T_{n,m}\right) b\left(\hat{Z}_{n\Delta_1}, \Lambda_{T_{n,m}}^{\hat{Z}_{n\Delta_1},i_0}\right),
    \end{align*}
    which follows directly from the piecewise constant property of the CTMC trajectory. The integral over $[0,R]$ reduces to a finite sum over holding intervals, which can be computed exactly with no discretization error. More precisely, assume that the process $(\Lambda_{t}^{\hat{Z}_{n\Delta_1},i_0})_{t\geq0}$ is a piecewise constant jump process with jump times $\{T_{n,m}\}_{m\geq0}$ satisfying
    \begin{equation*}
    	    0 = T_{n,0} < T_{n,1} < T_{n,2} < \cdots, \quad \lim_{m\to\infty} T_{n,m} = \infty,
    \end{equation*}
    and
    \begin{equation*}
    	    \Lambda_{s}^{\hat{Z}_{n\Delta_1},i_0} = \Lambda_{T_{n,m}}^{\hat{Z}_{n\Delta_1},i_0}, \quad \forall\,s\in[T_{n,m},T_{n,m+1}).
    \end{equation*}
    Define the stopping time
    \begin{equation*}
    	    \eta_{n,R} := \inf\left\{k\in\mathbb{N}_+ \,:\, T_{n,k} \geq R\right\}.
    \end{equation*}
    By construction, the interval $[0,R]$ can be partitioned as
    \begin{equation*}
    	    [0,R] = \bigcup_{m=0}^{\eta_{n,R}-1} \left[T_{n,m}, \, T_{n,m+1}\wedge R\right].
    \end{equation*}
    We now compute the integral:
    \begin{align*}
    	\int_{0}^{R} b\left(\hat{Z}_{n\Delta_1}, \Lambda_{s}^{\hat{Z}_{n\Delta_1},i_0}\right) \dif s
    	&= \sum_{m=0}^{\eta_{n,R}-1} \int_{T_{n,m}}^{T_{n,m+1}\wedge R} b\left(\hat{Z}_{n\Delta_1}, \Lambda_{s}^{\hat{Z}_{n\Delta_1},i_0}\right) \dif s \\
    	&= \sum_{m=0}^{\eta_{n,R}-1} b\left(\hat{Z}_{n\Delta_1}, \Lambda_{T_{n,m}}^{\hat{Z}_{n\Delta_1},i_0}\right) \int_{T_{n,m}}^{T_{n,m+1}\wedge R} \dif s \\
    	&= \sum_{m=0}^{\eta_{n,R}-1} \left(T_{n,m+1}\wedge R - T_{n,m}\right) b\left(\hat{Z}_{n\Delta_1}, \Lambda_{T_{n,m}}^{\hat{Z}_{n\Delta_1},i_0}\right).
    \end{align*}
    Dividing both sides by $R$ yields the desired identity.
\end{remark}

\subsection{Strong convergence of Algorithm 3}
First, we derive the error bound between the averaged drift \(\bar{b}(x)\) and the continuous-time drift estimator \(\tilde{b}_R(x)\). This error purely originates from the ergodic averaging effect and contains no discretization bias.
\begin{lemma}
Suppose that (H1)-(H3) hold.  Then, for any $p\geq 2$, there exists a constant $C_p>0$ such that for any $R\geq1$,
    \begin{equation*}
        \E|\bar{b}(x)-\tilde{b}_R(x)|^p\leq C_p(1+|x|^{3p/2})\frac{1}{R^{p/2}}.
    \end{equation*}
\end{lemma}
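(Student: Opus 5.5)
The plan is to repeat the ergodic-error part of the proof of Lemma \ref{lem:guji}. This case is simpler, because $\tilde{b}_R(x)=\frac1R\int_0^R b(x,\Lambda_s^{x,i_0})\dif s$ is an exact time average and there is no discretization term. Since the quantity depends only on the law of the frozen chain, I would work with the Skorokhod representation $\widetilde{\Lambda}^{x,i_0}_t$ from \eqref{eq:skorohod}, driven by a Poisson random measure. Then
\[
\E|\bar b(x)-\tilde b_R(x)|^p=\frac{1}{R^p}\,\E\left|\int_0^R\left(\bar b(x)-b(x,\widetilde{\Lambda}^{x,i_0}_s)\right)\dif s\right|^p.
\]

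Next I would introduce the Poisson equation \eqref{eq:poi}, $-Q(x)\Phi(x,\cdot)=b(x,\cdot)-\bar b(x)$. The right-hand side is centered with respect to $\mu^x$ and bounded by $C(1+|x|)$ under (H1), so Theorem \ref{thm:poisson1} gives $\|\Phi(x,\cdot)\|_\infty\le C(1+|x|)$. Applying It\^o's formula \eqref{eq:ghshg} to $\Phi(x,\widetilde\Lambda_t^{x,i_0})$, the time integral becomes a boundary term plus a compensated Poisson martingale:
\[
\int_0^R Q(x)\Phi(x,\cdot)(\widetilde\Lambda_s^{x,i_0})\dif s=\Phi(x,\widetilde\Lambda_R^{x,i_0})-\Phi(x,i_0)-\mathcal{M}_R .
\]
The boundary term contributes at most $C_p(1+|x|)^p$.

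For the martingale I would use Kunita's first inequality, exactly as in \eqref{eq:daiwa}. The jump function $h(x,k,\cdot)$ vanishes outside an interval of length $q_k(x)\le C(1+|x|)$ by (H3). Each jump increment of $\Phi$ is bounded by $2\|\Phi\|_\infty\le C(1+|x|)$. The quadratic term is therefore at most $C_p\big((1+|x|)^3R\big)^{p/2}=C_p(1+|x|)^{3p/2}R^{p/2}$, and the $p$-th moment term is at most $C_p(1+|x|)^{p+1}R$. Using $R\ge1$, $p\ge2$ and $p+1\le 3p/2$, the whole expression is bounded by $C_p(1+|x|)^{3p/2}R^{p/2}$. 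Dividing by $R^p$ gives the claimed bound $C_p(1+|x|^{3p/2})R^{-p/2}$, after using $(1+|x|)^{3p/2}\le C_p(1+|x|^{3p/2})$.

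The main obstacle is that the statement is meant to hold also for countably infinite $\mathbb{S}$, where the Skorokhod construction and It\^o's formula need justification. This is where I would spend care. Finiteness of the total jump intensity $K(x)\le C(1+|x|)$ from (H3) makes the intervals $\Delta_{kl}(x)$ lie in $[0,C(1+|x|)]$, so the chain is non-explosive and the construction works as before. The Poisson bound in Theorem \ref{thm:poisson1} relies only on the uniform exponential ergodicity in (H2)(iii) and on $\|F\|_\infty<\infty$, so it carries over. With $\Phi$ bounded, It\^o's formula for the pure-jump process with finite intensity is elementary. It amounts to a sum over finitely many jumps on $[0,R]$ minus its compensator, so the argument goes through verbatim.
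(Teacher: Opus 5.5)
Your proposal is correct and follows the paper's proof essentially line by line. It uses the Skorokhod representation \eqref{eq:skorohod}, the Poisson equation \eqref{eq:poi} with $\|\Phi(x,\cdot)\|_\infty\le C(1+|x|)$, It\^o's formula to split the time integral into a boundary term plus a compensated Poisson martingale, Kunita's first inequality as in \eqref{eq:daiwa}, and division by $R^p$ using $R\ge1$. Your explicit handling of the two Kunita terms (via $p+1\le 3p/2$ and $R\le R^{p/2}$) and your remarks on non-explosion and the Poisson bound for countable $\mathbb{S}$ spell out details the paper leaves implicit.
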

\begin{proof}
By definition,
\[
    \E\left|\bar{b}(x)-\tilde{b}_R(x)\right|^p
    =\frac{1}{R^p}\E\left|\int_{0}^{R}\left(\bar{b}(x)-b(x,{\Lambda}_{s}^{x,i_0})\right)\dif s\right|^p.
\]
Applying the Poisson equation technique as in Lemma \ref{lem:guji}, but without the discretization error term, yields the desired $O(1/R)$ bound. More specifically,
we work on the equivalent CTMC $(\widetilde{\Lambda}_t^{x,i_0})_{t\geq0}$ constructed via Skorokhod's representation \eqref{eq:skorohod}, and consider the Poisson equation \eqref{eq:poi}. Applying It\^o's formula for  $\Phi(x,\widetilde{\Lambda}_t^{x,i_0})$ and rearranging gives
\begin{align*}
	&\quad \int_{0}^{R}\left(\bar{b}(x)-b(x,\widetilde{\Lambda}_{s}^{x,i_0})\right)\dif s
	\\
    &=-\int_{0}^{R}\int_{[0,\infty)}\bigl(\Phi(x,\widetilde{\Lambda}_{s-}^{x,i_0}+h(x,\widetilde{\Lambda}_{s-}^{x,i_0},z))-\Phi(x,\widetilde{\Lambda}_{s-}^{x,i_0})\bigr)\widetilde{N}(\dif s,\dif z)\\
    &\quad +\Phi(x,\widetilde{\Lambda}_{R}^{x,i_0})-\Phi(x,i_0).
\end{align*}
Taking the $p$-th power and applying the $C_r$-inequality, the boundary term satisfies $\E|\Phi(x,\widetilde{\Lambda}_R^{x,i_0})-\Phi(x,i_0)|^p\leq C_p(1+|x|)^p$.
As for the stochastic integral, the calculation is the same as in \eqref{eq:daiwa}. Dividing by $R^p$ and absorbing the boundary term (since $R\geq1$ and $3p/2\geq p$) completes the proof.
\end{proof}

Just like \eqref{EQ:discrete-time multiscale:01}, for computational convenience, we present the continuous-time interpolated version of the aforementioned Algorithm \ref{alg:continuous}, and with a minor abuse of notation, continue to denote it by $\hat{Z}_t$:
\begin{equation}\label{eq:anotherz}
	\begin{split}
		\dif \hat{Z}_t &=\tilde{b}_{R}(\hat{Z}_{t(\Delta_1)})\dif t+\sigma (\hat{Z}_{t(\Delta_1)})\dif W_t\\
		&= \frac{1}{R}\int_0^R b\left(\hat{Z}_{t(\Delta_1)}, \Lambda_{s}^{\hat{Z}_{t(\Delta_1)},i_0}\right)\dif s \dif t +\sigma (\hat{Z}_{t(\Delta_1)})\dif W_t,\quad \hat{Z}_0= x_0,
	\end{split}
\end{equation}
We also note that the moment estimate in Lemma \ref{lem:juguj} remains valid for the continuous-time estimator $\tilde{b}_R$. Using a similar argument as in Lemma \ref{lem:third3}, we obtain the following result.
\begin{lemma}
Suppose that (H1)-(H3) hold.  Then, for any $T>0$, $x_0\in\mathbb{R}^d$, and $p\geq 2$, there exists a constant $C_{x_0,T,p}$ such that
    \begin{align*}
        \mathbb{E}\left(\sup_{0\leq t\leq T}|Z_t-\hat{Z}_t|^p\right)\leq C_{x_0,T,p}\frac{1}{R^{p/2}},
    \end{align*}
    where $\hat{Z}_t$ is defined by \eqref{eq:anotherz}.
\end{lemma}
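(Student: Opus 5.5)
We mirror the proof of Lemma \ref{lem:third3}, with the drift estimator $\tilde{b}_{\Delta_2,M}$ replaced by $\tilde{b}_R$ and Lemma \ref{lem:guji} replaced by the preceding lemma, which gives $\E|\bar{b}(x)-\tilde{b}_R(x)|^p\leq C_p(1+|x|^{3p/2})R^{-p/2}$. From the definitions of $Z_t$ and of $\hat{Z}_t$ in \eqref{eq:anotherz},
\[
Z_t-\hat{Z}_t=\int_0^t\bigl(\bar{b}(Z_{s(\Delta_1)})-\tilde{b}_R(\hat{Z}_{s(\Delta_1)})\bigr)\dif s+\int_0^t\bigl(\sigma(Z_{s(\Delta_1)})-\sigma(\hat{Z}_{s(\Delta_1)})\bigr)\dif W_s .
\]
We apply the Burkholder--Davis--Gundy, H\"older and $C_r$ inequalities. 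Then we insert $\pm\bar{b}(\hat{Z}_{s(\Delta_1)})$ in the drift. This splits $\E\sup_{s\le t}|Z_s-\hat{Z}_s|^p$ into three terms $\mathcal{S}_1,\mathcal{S}_2,\mathcal{S}_3$, exactly as before. By the Lipschitz continuity of $\bar{b}$ (Remark \ref{MULTI:RMK:0416:01}) and of $\sigma$, the terms $\mathcal{S}_1+\mathcal{S}_3$ are bounded by $C_{p,T}\int_0^t\E\sup_{r\le s}|Z_r-\hat{Z}_r|^p\dif s$.

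The key term is $\mathcal{S}_2=C_{p,T}\int_0^t\E|\bar{b}(\hat{Z}_{s(\Delta_1)})-\tilde{b}_R(\hat{Z}_{s(\Delta_1)})|^p\dif s$. Here we use the construction in Algorithm \ref{alg:continuous}. At each macro step the frozen chain $\Lambda^{x,i_0}$ is generated from fresh uniforms $U_m,V_m$. These are independent of $\mathscr{F}_{n\Delta_1}$, and hence of $\hat{Z}_{n\Delta_1}$ (which depends only on the Brownian increments and the earlier micro simulations). So we condition on $\hat{Z}_{s(\Delta_1)}$ and use the freezing/independence property, as in the proof of Lemma \ref{lem:third3}. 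This gives
\[
\E|\bar{b}(\hat{Z}_{s(\Delta_1)})-\tilde{b}_R(\hat{Z}_{s(\Delta_1)})|^p=\E\Bigl[\E|\bar{b}(x)-\tilde{b}_R(x)|^p\big|_{x=\hat{Z}_{s(\Delta_1)}}\Bigr]\leq C_p R^{-p/2}\,\E\bigl(1+|\hat{Z}_{s(\Delta_1)}|^{3p/2}\bigr).
\]
The preceding lemma enters at the inequality. To make this rigorous, the cleanest route is to regard the estimator as a measurable function $G(x,\omega')$ of $x$ and of a micro-randomness $\omega'$ independent of $x$. One then applies the standard freezing lemma $\E[G(\xi,\cdot)\mid\xi]=g(\xi)$ with $g(x)=\E G(x,\cdot)$.

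It remains to control $\E\sup_{t\le T}|\hat{Z}_t|^{3p/2}$ uniformly in $R$ and $\Delta_1$. The paper notes that Lemma \ref{lem:juguj} carries over to $\tilde{b}_R$. We justify this briefly: $|\tilde{b}_R(x)|\le\frac1R\int_0^R|b(x,\Lambda_s)|\dif s\le C(1+|x|)$ pathwise, because $\|b(x,\cdot)\|_\infty\le C(1+|x|)$ under (H1). Hence the It\^o/Gr\"onwall argument of Lemma \ref{lem:juguj}, run with exponent $3p/2$, yields a bound $C_{x_0,T,p}$ independent of $R$. Therefore $\mathcal{S}_2\le C_{x_0,T,p}R^{-p/2}$.

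Combining the three estimates gives $\E\sup_{s\le t}|Z_s-\hat{Z}_s|^p\le C_{x_0,T,p}R^{-p/2}+C_{p,T}\int_0^t\E\sup_{r\le s}|Z_r-\hat{Z}_r|^p\dif s$, and Gr\"onwall's inequality closes the argument. The main obstacle is the conditioning step in $\mathcal{S}_2$. There one must make sure that the micro simulation at step $n$ is independent of $\hat{Z}_{n\Delta_1}$, and that the pointwise bound of the preceding lemma, whose constant is uniform in $x$ apart from the polynomial weight, can be integrated against the law of $\hat{Z}_{s(\Delta_1)}$. Everything else is routine.
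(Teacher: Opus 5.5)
Your proposal is correct and follows essentially the same route as the paper. You use the same three-term decomposition after inserting $\pm\bar{b}(\hat{Z}_{s(\Delta_1)})$, the same Lipschitz control of the first and third terms, the tower property combined with the preceding lemma and the moment bound for $\hat{Z}_t$ for the middle term, and then Gr\"onwall's inequality. Your explicit justification of the freezing/independence step, and of the $R$-uniform moment bound at exponent $3p/2$ via the pathwise bound $|\tilde{b}_R(x)|\le C(1+|x|)$, fills in details that the paper only asserts.
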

\begin{proof}
    Writing $Z_t-\hat{Z}_t$ as the sum of a drift integral and a stochastic integral, applying It\^o's formula to $|Z_t-\hat{Z}_t|^p$, and using the Burkholder-Davis-Gundy inequality  and Young's inequality, we decompose the error into three terms:
\begin{align*}
	\mathbb{E}\left(\sup_{0\leq s\leq t}|Z_s-\hat{Z}_s|^p\right)
	&\leq C_{p,T}\int_0^t\E|\bar{b}(Z_{s(\Delta_1)})-\bar{b}(\hat{Z}_{s(\Delta_1)})|^p\dif s\\
	&\quad+C_{p,T}\int_0^t\E|\bar{b}(\hat{Z}_{s(\Delta_1)})-\tilde{b}_R(\hat{Z}_{s(\Delta_1)})|^p\dif s\\
	&\quad+C_{p,T}\int_0^t\E|Z_{s(\Delta_1)}-\hat{Z}_{s(\Delta_1)}|^p\dif s,
\end{align*}
for any $t\leq T$. 
The first and third terms are controlled by the Lipschitz continuity of $\bar{b}$ and $\sigma$. For the second term, the tower property and the previous lemma give
\[
\E|\bar{b}(\hat{Z}_{s(\Delta_1)})-\tilde{b}_R(\hat{Z}_{s(\Delta_1)})|^p
\leq C_p\E\left[(1+|\hat{Z}_{s(\Delta_1)}|^{3p/2})\right]\frac{1}{R^{p/2}}
\leq C_{x_0,T,p}\frac{1}{R^{p/2}},
\]
where the last inequality uses the moment bound for $\hat{Z}_t$. The Gr\"onwall inequality then yields the desired estimate.
\end{proof}

Proceeding similarly to the discrete-time case, we obtain the strong convergence results.
\begin{theorem}
Suppose that (H1)-(H3) hold. Then, for any $T>0$, $x_0\in\mathbb{R}^d$, $i_0\in\mathbb{S}$, and $p\geq 2$, there exists a constant $C_{x_0,T,p}$ such that
    \begin{align*}
        \mathbb{E}\left(\sup_{0\leq t\leq T}|X^{\varepsilon}_t-\hat{Z}_t|^p\right)\leq C_{x_0,T,p} \left(\varepsilon^{p/2}+\Delta_1^{p/2}+\frac{1}{R^{p/2}}\right).
    \end{align*}
\end{theorem}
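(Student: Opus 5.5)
The plan is a triangle-inequality decomposition through the averaged solution $\bar{X}_t$ and the exact-drift EM scheme $Z_t$ of Algorithm \ref{alg:hmm-euler}, exactly parallel to how Theorem \ref{Thm:strongr} follows from Lemmas \ref{lem:first1}, \ref{lem:second2} and \ref{lem:third3}. Write
\[
X^{\varepsilon}_t-\hat{Z}_t=(X^{\varepsilon}_t-\bar{X}_t)+(\bar{X}_t-Z_t)+(Z_t-\hat{Z}_t).
\]
Take the supremum over $t\in[0,T]$ and then expectations, using the $C_r$-inequality $|a+b+c|^p\le 3^{p-1}(|a|^p+|b|^p+|c|^p)$. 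This reduces the claim to bounding three separate terms, with the constant $3^{p-1}$ absorbed into $C_{x_0,T,p}$.

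The first term is controlled by Lemma \ref{lem:first1}. Since $x_0$ is fixed, the factor $(1+|x_0|^{k_p})$ becomes part of the constant, and we get the bound $C\varepsilon^{p/2}$ uniformly in $\varepsilon\in(0,1]$. The second term is exactly Lemma \ref{lem:second2}, which gives $C\Delta_1^{p/2}$. Neither term depends on the micro solver, so no new work is needed for them.

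The third term is controlled by the preceding lemma for $\hat{Z}_t$ defined by \eqref{eq:anotherz}, which gives $C R^{-p/2}$ for $R\ge1$. That lemma is where the real work sits, so I would re-check its two ingredients.

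The first ingredient is the uniform moment bound $\sup_t\E|\hat{Z}_t|^{3p/2}\le C_{x_0,T,p}$ for the continuous-time estimator. This follows as in Lemma \ref{lem:juguj}: apply Jensen's inequality to $\frac1R\int_0^R b\,\dif s$ and use $\|b(x,\cdot)\|_\infty\le C(1+|x|)$. The argument needs no finiteness of $\mathbb{S}$, because it uses only the linear growth of $b$.

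The second ingredient is the conditioning step. Given $\hat{Z}_{s(\Delta_1)}=x$, the micro chain is generated with fresh randomness independent of the past, so the tower property lets us plug in the pointwise drift-error lemma and obtain $\E|\bar{b}(x)-\tilde{b}_R(x)|^p\le C_p(1+|x|^{3p/2})R^{-p/2}$.

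The main obstacle I anticipate is making this conditioning rigorous when $\mathbb{S}$ is countably infinite. It requires the Poisson-equation bound of Theorem \ref{thm:poisson1} and the Skorokhod construction \eqref{eq:skorohod} to hold with constants uniform in $x$. That uniformity is supplied by (H2)(iii), together with the jump-intensity bound $K(x)\le C(1+|x|)$ from (H3), which is what Kunita's inequality uses in \eqref{eq:daiwa}.

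Summing the three bounds yields
\[
\E\sup_{t\le T}|X^\varepsilon_t-\hat{Z}_t|^p\le C_{x_0,T,p}\bigl(\varepsilon^{p/2}+\Delta_1^{p/2}+R^{-p/2}\bigr).
\]
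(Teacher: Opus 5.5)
Your proposal is correct and matches the paper's proof: the paper also uses the $C_r$-inequality to split $X^{\varepsilon}_t-\hat{Z}_t$ through $\bar{X}_t$ and $Z_t$, then bounds the three pieces by Lemma \ref{lem:first1}, Lemma \ref{lem:second2}, and the preceding lemma on $Z_t-\hat{Z}_t$. Your re-check of that lemma's two ingredients (the moment bound and conditioning on $\hat{Z}_{s(\Delta_1)}$) is the same way the paper proves it, and the theorem itself does not need it.
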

\begin{proof}
    By the $C_r$-inequality,
\[
|X^{\varepsilon}_t-\hat{Z}_t|^p\leq C_p\left(|X^{\varepsilon}_t-\bar{X}_t|^p+|\bar{X}_t-Z_t|^p+|Z_t-\hat{Z}_t|^p\right).
\]
Taking the supremum over $t\in[0,T]$ and expectation, the three terms are bounded respectively by the averaging principle (Lemma \ref{lem:first1}), Lemma \ref{lem:second2}, and the previous lemma, yielding
\[
\mathbb{E}\left(\sup_{0\leq t\leq T}|X^{\varepsilon}_t-\hat{Z}_t|^p\right)
\leq C_{x_0,T,p}\left(\varepsilon^{p/2}+\Delta_1^{p/2}+\frac{1}{R^{p/2}}\right).
\]
The proof is complete.
\end{proof}

\begin{remark}
Compared with the discrete-time scheme, the error bound no longer contains the $\Delta_2$ discretization term. For the same total microscopic simulation time $R$, the continuous-time method achieves strictly higher accuracy. Moreover, by choosing $R \propto 1/\Delta_1$, the overall error achieves $O(\Delta_1)$ convergence.
\end{remark}

\subsection{Numerical experiments of Algorithm 3}
\begin{figure}[H]
	\centering
    \includegraphics[width=0.6\linewidth]{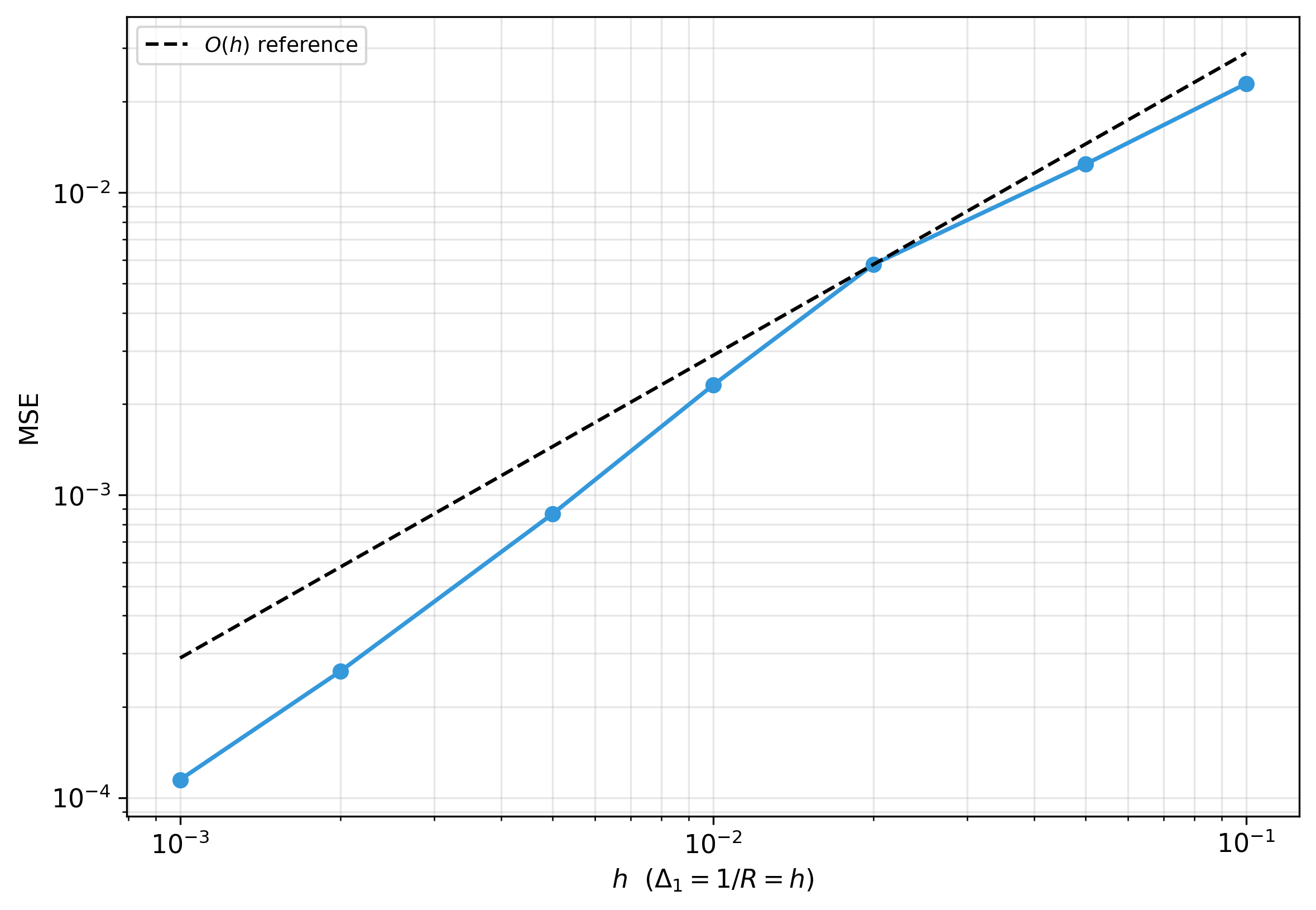}
	\caption{Log-log plot of  MSE between \(\bar{X}_{0.5}^{ref}\) and \(\hat{Z}_{0.5}\) against \(h=(\Delta_1=1/R)\)}
	\label{fig:loglog_mse_h}
\end{figure}
Similarly, by setting $N=100$ in the \textbf{Example (b)}, we further investigate the convergence with respect to \(h=\Delta_1=1/R\). As seen in Figure \ref{fig:loglog_mse_h}, the data points lie approximately along a straight line on the log–log scale, confirming that the MSE decays at first order in $h$.

\begin{table}[H]
  \centering
  \caption{Runtime (s) comparison II with different $N$}
  \begin{tabular}{rccccc}
    \toprule
    $N$ & Algorithm 1 & Algorithm 2 & Algorithm 3 & Speedup (3 vs 1) & Speedup (3 vs 2) \\
    \midrule
    10   & 0.0011 & 0.0039 & 0.0004 &  2.75$\times$ & 9.75$\times$ \\
    50   & 0.0194 & 0.0289 & 0.0070 &  2.77$\times$ &  4.13$\times$ \\
    100  & 0.0394 & 0.0517 & 0.0124 &  3.18$\times$ &  4.17$\times$ \\
    200  & 0.1439 & 0.1480 & 0.0357 &  4.03$\times$ &  4.15$\times$ \\
    500  & 1.2223 & 0.4343 & 0.1046 & 11.69$\times$ &  4.15$\times$ \\
    1000 & 5.4191 & 1.0719 & 0.2155 & 25.15$\times$ &  4.97$\times$ \\
    \bottomrule
  \end{tabular}
  \label{table:ring_compare3}
\end{table}
\begin{figure}[htbp]
	\centering
	\includegraphics[width=0.6\linewidth]{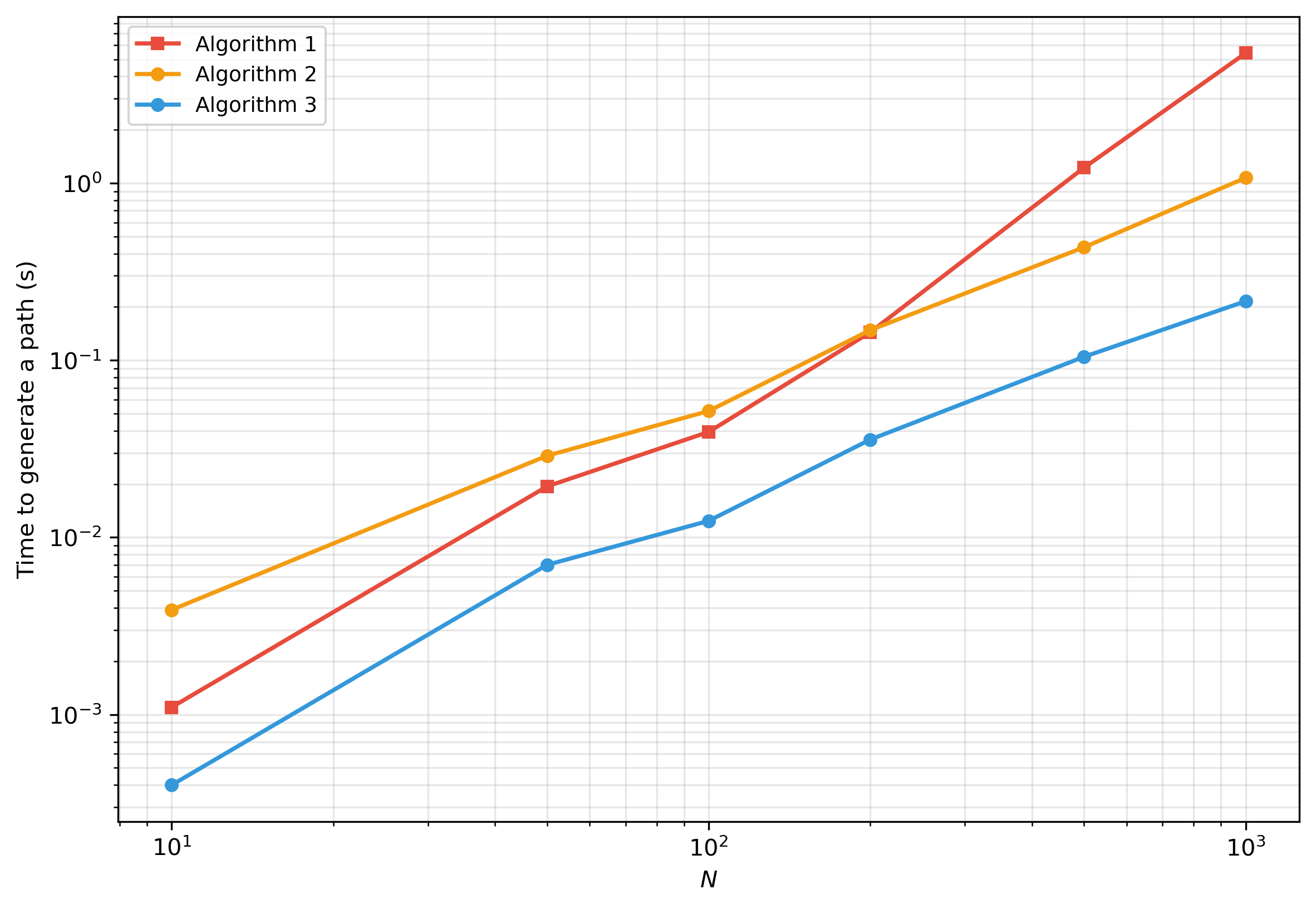}
	\caption{Runtime comparison of Algorithms 1,2, and 3  on log-log scale}
	\label{fig:ring_time_curve}
\end{figure}
Meanwhile, we set the target MSE at \(t^{*}=0.5\) to be \(10^{-4}\) and compare the computational efficiency of Algorithm \ref{alg:hmm-euler}, Algorithm \ref{alg:discrete}, and the proposed  Algorithm \ref{alg:continuous}. Table \ref{table:ring_compare3} and Figure \ref{fig:ring_time_curve} report the runtime corresponding to different state numbers $N$. As $N$ increases, the computational cost of all three schemes grows. Among the three methods,  Algorithm \ref{alg:continuous} consistently yields the shortest runtime both Algorithm \ref{alg:hmm-euler}  and Algorithm \ref{alg:discrete}. Furthermore, the speedup factor of  the Algorithm \ref{alg:continuous} relative to Algorithm \ref{alg:hmm-euler}  increases markedly with $N$ and exceeds \(25\times\) for \(N=1000\). These observations demonstrate that the proposed Algorithm \ref{alg:continuous} achieves superior computational efficiency, particularly for the fast process with large state spaces.


Finally, we conclude with some further remarks on Algorithm \ref{alg:discrete} and Algorithm \ref{alg:continuous}.
For a finite state space, both $Q$ and $P_{\Delta_2}=\mathrm{e}^{\Delta_2Q}$ can be explicitly stored; whereas for an infinite state space, although $Q$, being locally supported, is straightforward to specify, the semigroup $\mathrm{e}^{\Delta_2Q}$ generally admits no closed-form expression and thus cannot be directly sampled from, which makes Algorithm \ref{alg:continuous} more efficient than Algorithm \ref{alg:discrete}. In the following, we present an example of SDE with state-dependent fast switching over an infinite state space to illustrate the effectiveness of Algorithm \ref{alg:continuous}.

\noindent\textbf{Example (c)}  Let the state space be $\mathbb{S} =\mathbb{N}$. Consider the slow-fast coupled system:
\[
\begin{cases}
\dif X_t^\varepsilon = b(X_t^\varepsilon, \Lambda_t^\varepsilon)\dif t + \sigma(X_t^\varepsilon)\dif W_t, \\
\Lambda_t^\varepsilon \text{ is a CTMC with generator } \varepsilon^{-1} Q(x) \text{ frozen at } x = X_t^\varepsilon,
\end{cases}
\]
where $W_t$ is a standard $1$-dimensional Brownian motion. To define the generator $Q(x)$, we first introduce
\[
\rho(x) := \frac{1 + 3/10\sin x}{2 + 3/10\cos x},
\]
and the rate modulation factor
\[
s_i := 2 + 1/10\sin i,\quad i \geq 0.
\]
By construction, $ \rho(x)\in [7/23,13/17]$ uniformly in $x$, and $s_i \in [19/10, 21/10]$ uniformly in $i$.

The fast birth-death chain has transition rates:
\begin{align*}
    \begin{cases}
        q_{i,i+1}(x)=\lambda_i(x) := \rho(x)\, s_i,&i \ge 0;\\
        q_{i,i-1}(x) =\kappa_i(x):= s_{i-1}  &i \ge 1, \text{~with~} q_{0,-1}(x) = 0;\\
        q_{ii}(x) = -(\lambda_i(x)  + \kappa_i(x)),&i \ge 1, \text{~with~}q_{00}(x) = -q_{0,1}(x).
    \end{cases}
\end{align*}
The drift and diffusion coefficients of the slow component are
\[
b(x,i) = -x + \gamma(x)\, \rho(x)^i,\quad \gamma(x) = 3/10 + 1/10\sin(2x),\quad \sigma(x) = 4/5x.
\]
For each frozen $x$, the birth-death chain satisfies the detailed balance condition
\[
\mu_i^x \cdot \kappa_i(x) = \mu_{i-1}^x \cdot \lambda_{i-1}(x),\quad i \ge 1.
\]
Substituting the rates, the factor $s_{i-1}$ cancels out, giving the recursion $\mu_i^x = \rho(x)\, \mu_{i-1}^x$. Normalization yields the unique invariant measure
\[
\mu_i^x = \big(1-\rho(x)\big)\, \rho(x)^i,\quad i \geq 0.
\]
Next, by definition of averaged  coefficient, it follows
\begin{align*}
    \bar{b}(x) &= \sum_{i=0}^{\infty} b(x,i) \, \mu_i^x=-x \sum_{i=0}^{\infty} \mu_i^x + \gamma(x) \sum_{i=0}^{\infty} \rho(x)^i \mu_i^x\\
    &= -x + \gamma(x)(1 - \rho(x)) \sum_{i=0}^{\infty} \rho(x)^{2i}= -x + \frac{\gamma(x)}{1 + \rho(x)}.
\end{align*}

Since $\rho(x)$ is uniformly bounded away from 1, the invariant measure has exponentially decaying tails and the chain is positive recurrent.

We verify that the model satisfies the  assumptions (H1)--(H3).

\noindent(H1)  For Lipschitz continuity in $x$, note that
\[
\left(\gamma(x)\rho(x)^i\right)^{\prime} = \gamma'(x)\rho(x)^i + \gamma(x)\, i\rho(x)^{i-1}\rho'(x).
\]
Since $\sup_{i\ge1} i\rho^{i-1} < \infty$ uniformly in $x$, the derivative is bounded, so $|b(x,i)-b(y,i)| \le C|x-y|$. The  condition $|b(x,i)-b(x,j)| \le C\mathbbm{1}_{\{i\neq j\}}$ holds by boundedness of $\gamma(x)\rho(x)^i$. The diffusion $\sigma(x)=4/5x$ is clearly Lipschitz and of linear growth. Thus (H1) holds.

\noindent(H2) 
(i) Conservativeness holds by construction: off-diagonal rates are non-negative and each row sums to zero; (ii) All birth and death rates are strictly positive, so the chain is irreducible; the unique positive invariant measure is derived above; (iii) Since $s_i \in [19/10, 21/10]$ and $\rho(x) \le \rho_0 < 1$, there exists $\delta>0$ such that $\kappa_i(x) - \lambda_i(x) \ge \delta$ for all $i,x$, giving a uniform drift toward state 0. By the Foster--Lyapunov criterion with $V(i)=\mathrm{e}^{ci}$ for small $c>0$, the chain is uniformly exponentially ergodic:
\[
\sup_{i\in\mathbb{S},\,x\in\mathbb{R}} \| p_{i\cdot}^x(t) - \mu^x \|_{\mathrm{var}} \le C \mathrm{e}^{-\lambda t}.
\]
Thus (H2) holds.

\noindent(H3) Only birth rates depend on $x$, through the Lipschitz function $\rho(x)$. For each row $i$,
\[
\sum_{j\in\mathbb{S}} |q_{ij}(x) - q_{ij}(y)| = s_i\, |\rho(x)-\rho(y)| \le C|x-y|
\]
uniformly in $i$, so $Q(x)$ is Lipschitz in $x$ under the row-sum norm. Total exit rates are uniformly bounded, hence trivially satisfy linear growth. Thus (H3) holds.

  With $\Delta_1 = 1/R$ fixed at $0.1$, $0.05$, $0.02$, $0.01$, $0.005$, and $0.001$, we present the following comparison plots of the trajectories of the averaged equation and Algorithm \ref{alg:continuous}.

\begin{figure}[H]
	\centering
	\includegraphics[width=1\linewidth]{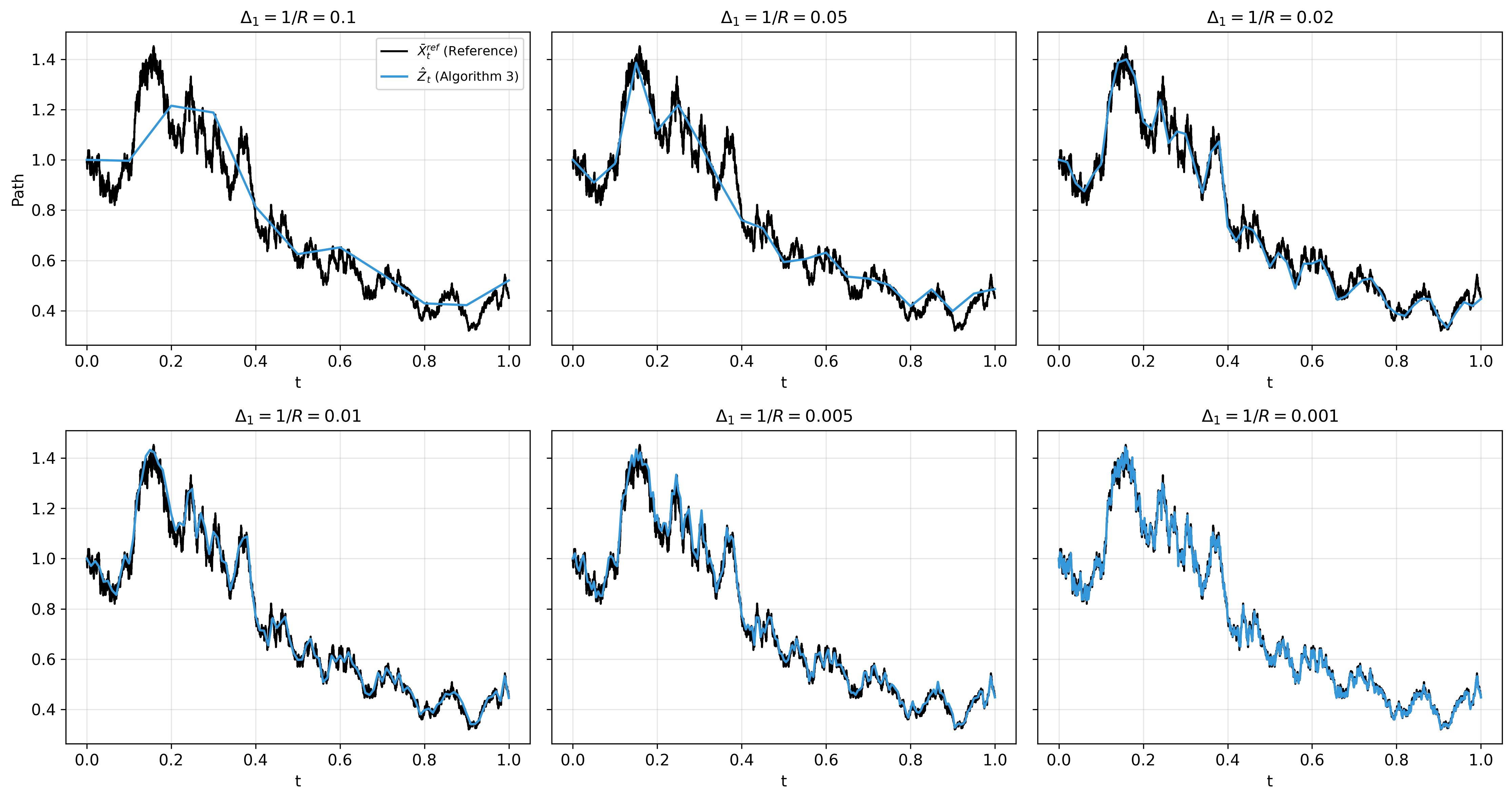}
	\caption{Comparison of sample paths of \(\bar{X}_t^{ref}\) and \(\hat{Z}_t\) as \(\Delta_1 =1/R \downarrow 0\)}
	\label{fig:C_HMMandAveragedequation}
\end{figure}

We further investigate the convergence of Algorithm \ref{alg:continuous}  with respect to \(h=\Delta_1=1/R\). As seen in Figure \ref{fig:loglog_mse_h:02}, the data points lie approximately along a straight line on the log–log scale.

\begin{figure}[H]
	\centering
	\includegraphics[width=0.6\linewidth]{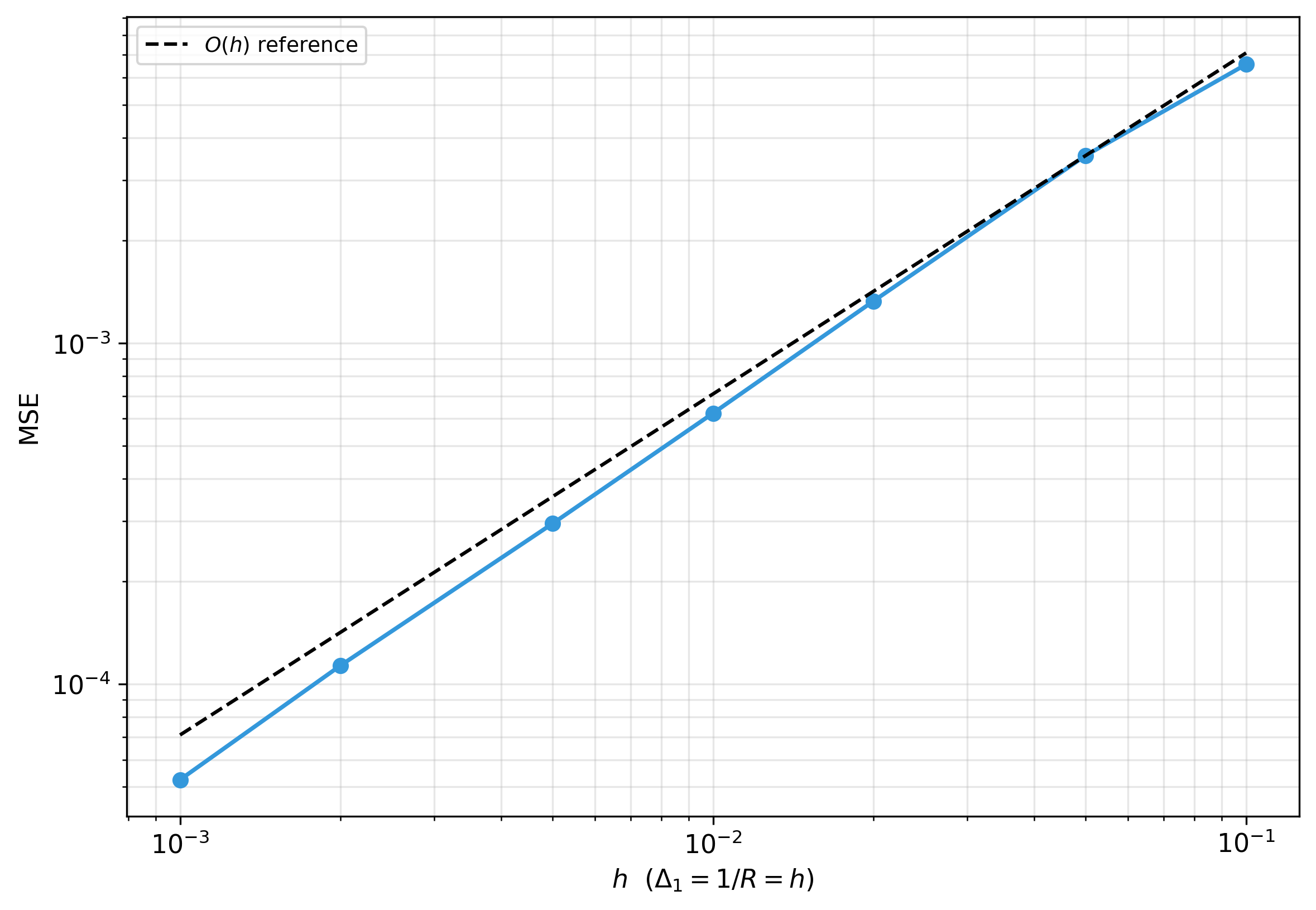}
	\caption{\mbox{Log-log plot of  MSE between \(\bar{X}_{0.5}^{ref}\) and \(\hat{Z}_{0.5}\) against \(h=(\Delta_1=1/R)\)}}
	\label{fig:loglog_mse_h:02}
\end{figure}
\section{Conclusion}\label{SEC:Conclusion}
This paper provides a systematic investigation of SDEs with state-dependent fast switching, focusing on the development of efficient numerical approximation schemes for the slow component $X^{\varepsilon}_t$ of the multiscale stochastic system. To the best of our knowledge, this research has not been adequately discussed in the previous literature.

In contrast to existing studies that mainly consider fast components characterized by diffusion processes, this work focuses on fast components related CTMC. Within this framework, we construct three different numerical approximation theory for the slow component $X^{\varepsilon}_t$. The main achievements of this work can be summarized as follows. 

\begin{enumerate}
    \item We rigorously formulate the numerical approximation problem for SDEs with state-dependent fast switching. Using the HMM framework, we further develop three different numerical algorithms.
    \item Rigorous strong convergence results are established for the three algorithms, and all theoretical derivations are thoroughly verified, thereby guaranteeing the reliability of the conclusions.
    \item Numerical experiments are conducted to validate our results. The results confirm the theoretical convergence rates of the three proposed algorithms, demonstrating the reliability and practicality of our framework.
\end{enumerate}


Based on the techniques used in this paper, future research can be expanded in many aspects. For example, we will relax the restrictive assumptions of the existing framework, including the superlinear growth condition and Hölder continuity condition, and develop the corresponding algorithms to extend the established results to more general multiscale stochastic systems; we will extend the research framework to a broader class of multiscale stochastic systems where the fast process still retains state-dependent switching components, while the slow process is driven by L\'evy processes and fractional Brownian motion; in addition, further research can be conducted on the weak convergence results of the proposed HMM schemes to improve the theoretical completeness of the system.

Overall, this study enriches the numerical approximation theory for SDEs with state-dependent fast switching, provides new methodological insights and rigorous theoretical guarantees for relevant numerical simulations. Moreover, it establishes a foundation for future theoretical developments and practical applications of multiscale stochastic systems.

\vspace{0.3cm}
\textbf{Acknowledgment}. The research of Xiaobin Sun is supported by the NSF of China (Nos.
12271219 and 12671173) and the Priority Academic Program Development of Jiangsu Higher Education Institutions.

\bibliographystyle{abbrv}
\bibliography{Reference_EM}

\end{document}